\documentclass[11pt,twoside]{article}
\usepackage{latexsym}
\usepackage{amssymb,amsbsy,amsmath,amsfonts,amssymb,amscd,amsthm}
\usepackage{epsfig, graphicx}
\usepackage{color}
\usepackage{enumerate}
\usepackage{pdfsync}
\setlength{\oddsidemargin}{-5mm}
\setlength{\evensidemargin}{-5mm}
\setlength{\topmargin}{-5mm}
\setlength{\textheight}{22cm}
\setlength{\textwidth}{17cm}

\parindent 10pt

\newcommand{\norm}[1]{\|#1 \|}
\newcommand{\R}{\mathbb{R}}
\newcommand{\N}{\mathbb{N}}

\newcommand {\e}  {\varepsilon}

\newcommand{\caT}{{\cal T}}

\newcommand {\f}   {\frac}

\newcommand{\beq}{\begin{equation}}
\newcommand{\beqa}{\begin{eqnarray}}
\newcommand{\bea} {\begin{array}{ll}}
\newcommand{\beqan}{\begin{eqnarray*}}
\newcommand{\eeq}{\end{equation}}
\newcommand{\eeqa}{\end{eqnarray}}
\newcommand{\eeqan}{\end{eqnarray*}}
\newcommand{\eea} {\end{array}}
\newtheorem{theorem}{Theorem}[section]
\newtheorem{lemma}[theorem]{Lemma}

\newtheorem{proposition}[theorem]{Proposition}
\newtheorem{corollary}[theorem]{Corollary}
\newtheoremstyle{remarkb}
	{}
	{}
	{\normalfont}
	{}
	{\bfseries}
	{}
	{ }
	{}
\theoremstyle{remarkb}
\newtheorem{remark}[theorem]{Remark}
\numberwithin{equation}{section}
\newenvironment{acknowledgment}{\noindent{\bf Acknowledgment}}{}
\title{\Large Existence, uniqueness and asymptotic behavior of the solutions to the fully parabolic Keller-Segel system in the plane}

\author{Lucilla Corrias~$^{\text{a}}$, Miguel Escobedo~$^{\text{b}}$, Julia Matos~$^{\text{a}}$}
\date{\today}
\begin{document}
\maketitle
\pagestyle{plain}
\pagenumbering{arabic}

\begin{abstract}
In the present article we consider several issues concerning the doubly parabolic Keller-Segel system \eqref{KSu}-\eqref{KSv} in the plane, when the initial data belong to critical scaling-invariant Lebesgue spaces. More specifically, we analyze the global existence of integral solutions, their optimal time decay, uniqueness and positivity, together with the uniqueness of self-similar solutions. In particular, we prove that there exist integral solutions of any mass, provided that $\e>0$ is sufficiently large. With those results at hand, we are then able to study the large time behavior of global solutions and prove that in the absence of the degradation term $(\alpha=0)$ the solutions behave like self-similar solutions, while in presence of the degradation term $(\alpha>0)$ global solutions behave like the heat kernel.
\end{abstract}

{\bf Key words.}  Chemotaxis, parabolic system, Keller-Segel system, global solutions, long time asymptotic behavior, self-similar solutions.

{\bf AMS subject classification:} 35B45; 35B60; 35B65; 35K15; 35Q92; 92C17; 92B05.
\section{Introduction}
This paper is devoted to the analysis of the parabolic-parabolic Keller-Segel system
\begin{eqnarray}
\label{KSu}
u_t&=&\Delta u-\nabla\cdot(u\,\nabla v)\,,\\
\label{KSv}
\e\,v_t&=&\Delta v+u-\alpha\, v\,,
\end{eqnarray}
in the whole plane $\R^2$, where $\e>0$, while $\alpha\ge0$. 

There exists a huge mathematical literature on system \eqref{KSu}-\eqref{KSv} in any space dimension. A particular interest is addressed to  the case of dimension two, generally considered as the natural one from the point of view of the biological interpretation of the model. In that case, most of the existing  results concern the parabolic-elliptic Keller-Segel system ($\e=0$). The goal of this paper is to analyse \eqref{KSu}-\eqref{KSv} for arbitrary positive values of $\e$. As we will see, this parameter is important not  only to determine whether we are in the doubly parabolic or in  the parabolic-elliptic case. It also represents different diffusivities on  $u$ and $v$ and that will be important for the existence, uniqueness and long time behavior properties of solutions.
 
In the remaining  of this introduction, we briefly describe our results and present them in the context of what is previously known. For the sake of clearness, due to the vast literature existing on the Keller-Segel system, we shall only mention papers that study the two dimensional case.  

Let us  recall first that a formal integration of the equation (\ref{KSu}) with respect to $x$ over all of  $\R^2$  indicates that the integral of  $u(t)$ is constant in time:
\[
M:=\int_{\R^2}u(x,t)dx=\int_{\R^2}u_0(x) dx\,,\quad t>0\,.
\]
This property will be proved to be true, for at least some of the solutions. On the other hand,  when $\alpha=0$, system \eqref{KSu}-\eqref{KSv} is invariant under the following space-time scaling
\beq
u_\lambda(x,t)=\lambda^2u(\lambda x,\lambda^2t)\,,\quad v_\lambda(x,t)=v(\lambda x,\lambda^2t)\,,\quad\lambda>0\,,
\label{eq:scaling}
\eeq
that preserves the  integral of $u(t)$ on $\R^2$. Scaling (\ref{eq:scaling}) also preserves the $L^2(\R^2)$ norm of $|\nabla v(t)|$. 
Hence, the space of functions $(u, v) \in L^1(\R^2)\times  \dot{H}^1(\R^2)$ arises very naturally, 
where $\dot{H}^1(\R^2)$ denotes the homogeneous Sobolev space defined via Fourier transform as the completion of $C^\infty_0(\R^2)$ under the seminorm
$\|v\|_{\dot{H}^1(\R^2)}^2=\int_{\R^2}|\xi|^2|\hat{v}(\xi)|^2\,d\xi$.

Moreover, the conserved mass $M$ of $u(t)$ should play an important role in the analysis of \eqref{KSu}-\eqref{KSv}. This is  the case for the two dimensional parabolic-elliptic system, that shows the well known threshold phenomenon \cite{BDP} : positive solutions are global in time if the mass $M$ is below $8\pi$ and blow-up in finite time if the mass is above $8\pi$. The critical case $M=8\pi$ has been studied in \cite{BCM}, where the authors show that positive solutions aggregate as $t\to\infty$ (see also \cite{Miz}, and \cite{BKLN} for the radially symmetric case). The global existence result for the mass of $u$ below $8\pi$ has been extended to the two dimensional parabolic-parabolic system in \cite{CC08,Miz}. We prove here that when $\e>0$, global solutions  may exist, even with large mass $M$.

In all the articles that are mentioned above, the authors consider positive solutions of weak type and the key tool used to obtain the necessary  a priori estimates for the global existence result is the free energy naturally associated to \eqref{KSu}-\eqref{KSv}, i.e.
\begin{equation}
\mathcal E(t) := \int_{\R^2} u\log u\,dx - \int_{\R^2} u\,v\,dx + \f12\int_{\R^2} |\nabla v|^2\,dx + \f\alpha2 \int_{\R^2} v^2\,dx\,.
\label{eq:fe}
\end{equation}

These weak solutions also satisfy the expected parabolic regularizing effect. However, this regularizing phenomenon is not proved to be uniform in time (\cite{CC08}). In order to overcome this problem and obtain the optimal decay in time estimates, we consider here the solutions of \eqref{KSu}-\eqref{KSv} in the following integral sense: 
\beq\label{eq:Duhamel_u}
u(t)=G(t)* u_0-\sum_{i}\int_0^t \partial_{i} G(t-s)*(u(s)\partial_{i} v(s))\ ds\,,
\eeq
\beq\label{eq:Duhamel_v}
v(t)=e^{-(\alpha/\e)\,t}\, G(\e^{-1}t)* v_0 +\e^{-1}\int_0^t e^{-(\alpha/\e)(t-s)} G(\e^{-1}(t-s))* u(s) \ ds\,.
\eeq
where $G(x,t)=\f1{4\pi t}\,e^{-|x|^2/4t}$ is the heat kernel.

These integral solutions are very natural and have been studied by several authors (see \cite{B98, BiBr, BGK, Ferreira,NSU03, N06} and Remark \ref{Rk:global existence}). In the present article, we prove the global  existence of solutions for initial  data $(u_0, v_0)\in  L^1(\R^2)\times \dot{H}^1(\R^2)$ under some condition that involves the size of the initial data and $\e$ (see Theorem~\ref{th:existence critical}). We then obtain the regularizing effects typical of the parabolic problems, i.e. the optimal time decay rates of~$\|u(t)\|_p$, $\|\nabla u(t)\|_p$ for $p\ge 1$, and $\|\nabla v(t)\|_r$, $\|\Delta v(t)\|_r$ for $r\ge 2$ (see Proposition~\ref{pr:moreestimates}). In particular, we obtain the uniform in time boundedness of~$u(t)$, without requiring the boundedness of the initial data (see also \cite{BGK} for the case $v_0=0$). These decay rates are then used  for the analysis of the long time behavior of the solutions. With these estimates at hand, we also  prove the continuous dependence of the global integral solutions with respect to the initial data. As a consequence, we deduce  the uniqueness and the positivity of the solution itself (see Theorem~\ref{th:continuous dependence} and Corollary \ref{crl:uniqueness positivity}). To the best of our knowledge, the contraction property for the distance between two solutions of \eqref{KSu}-\eqref{KSv} was previously proved in \cite{BKLN} for the parabolic-elliptic radially symmetric case, and in \cite{CLM}, in the context of the gradient flow formulation of \eqref{KSu}-\eqref{KSv}, for initial data $u_0\in (L^1\cap L^\infty)(\R^2)$ with finite second moment and $v_0\in H^1(\R^2)$ (see also the asymptotic stability result in \cite{Ferreira}). 

Our second result is about the uniqueness of positive integrable and rapidly decaying self-similar solutions of \eqref{KSu}-\eqref{KSv} with $\alpha=0$. These solutions are invariant with respect to the scaling \eqref{eq:scaling} and therefore provide a uniparametric family $(u_M, v_M)$ indexed by the mass $M$. The existence of such family has been considered by several authors (see \cite{B95,B98,BCD,BKLN, MMY99, NSY02,Y01} and  references therein). In \cite{Ferreira,N06} and for $\e=1$, it has been proved the existence and uniqueness of small self-similar solutions with small initial data, through the analysis of the integral formulation of \eqref{KSu}-\eqref{KSv} (see Remark \ref{rk:comparison}). However, the question of uniqueness in general is still largely open. Analyzing directly the profiles of $(u_M, v_M)$, we show in Theorem~\ref{th:uniqueness_self-similar} that for any $\e>0$, the positive integrable and rapidly decaying self-similar solution $(u_M, v_M)$ with $M$ less than some positive constant $\widetilde M(\e)\in[4\pi, 8\pi]$, that only depends on $\e$, is unique (see Figure \ref{fig:Mtilde}). Moreover, for all  $\e\in (0, 1/2]$,  $\widetilde M(\e)=8\pi$. Hence, in that case, for every $M<8\pi $ the self-similar solution $(u_M, v_M)$ is unique, exactly as for the parabolic-elliptic case~\cite{BKLN}.

The third result of this paper concerns the long time behavior of the global integral solutions of \eqref{KSu}-\eqref{KSv}. Due to the scaling invariance of the system in absence of the degradation term for $v$, it is natural to expect that, if $\alpha=0$, global solutions behave asymptotically in time as self-similar solutions of the same system. This is indeed observed in the case of the non-linear heat equation \cite{EK87b} and of a convection-diffusion equation \cite{EZ91a}. This is also the case for the two dimensional parabolic-elliptic Keller-Segel system with $\alpha=0$ and $M\le8\pi$ (see \cite{BKLN,BCM,BDP,BDEF,CD,EM}). The case of the doubly parabolic Keller-Segel system with $\e=1$, has been studied in \cite{Ferreira,N06}. In particular, the authors in \cite{N06} prove that the long time asymptotic behaviour of the integral solution $u$ is given by the self-similar solution $u_M$ in the $L^p(\R^2)$ space, $p\in(4/3,2)$, if $(1+|x|^2) u_0\in L^1(\R^2)$, $|\nabla v_0|\in(L^1\cap L^2)(\R^2)$ and $M$ is sufficiently small. In \cite{Ferreira} the authors prove that each self-similar solution furnish an attractor-basin for the global integral solution issued by a smooth perturbation of the initial data of the self-similar solution itself (see Remark \ref{S4Rem2}).

We prove in Theorem \ref{th:longtime-alpha=0} that if $\e>0$ and $(u,v)$ is a non-negative global solution of \eqref{KSu}-\eqref{KSv} satisfying the optimal in  time decay rates and such that the mass $M$ is below the same threshold $\widetilde M(\e)$ assuring the uniqueness of the self-similar solution $(u_M,v_M)$, then 
\[
t^{(1-1/p)}||u(t)-u_M(t)|| _{ L^p(\R^2) }+t^{1/2-1/r}||\nabla v(t)-\nabla v_M(t)|| _{ L^r(\R^2) }\to 0\,,\quad\text{as } t\to \infty\,,
\] 
for all $p\in [1, \infty]$ and $r\in [2, \infty]$. 
Therefore, in the case of $0<\e\leq \frac{1}{2}$, a global non-negative solution $(u,v)$ of 
\eqref{KSu}-\eqref{KSv} has the same long time behavior than the unique self-similar $(u_M,v_M)$, provided
$M<8\pi$.

For the seek of completeness, we also consider the case $\alpha>0$ and $\e>0$.
We prove then that the long time behavior of global integral solutions is the same as that of the heat kernel (see Theorem \ref{th:asymptbeh-alphapos}). In that case, the positivity of the initial data is not required. 

The paper is organized as follows. In Section \ref{sec:existence}, we give the local and global existence result of integral solutions. Section \ref{sec:self-similar} is devoted to the uniqueness issue of forward self-similar solutions. In Section \ref{sec:alpha0} we analyze the long time behavior of integral solution in the case $\alpha=0$, while the case $\alpha>0$ is considered in Section \ref{sec:alphapos}.
\section{Existence of integral solutions and decay estimates}
\label{sec:existence}
Our first result concerns the global existence of the integral solutions \eqref{eq:Duhamel_u}-\eqref{eq:Duhamel_v} and their optimal time decay rates, the same that for the linear heat equation. It is obtained using a fixed point type argument in an ad hoc complete metric spaces, a classical and efficient technique that gives the desired optimal time decay in counterpart. Moreover, the condition on the initial data, necessary for the global existence of the corresponding solution, depends on $\e$ in such a way that each mass $M$ may leads to a global-in-time-solution (see Remark \ref{rm:small e}).

\begin{theorem}[Local and global existence]\label{th:existence critical}
Let $\e>0$, $\alpha\ge0$, $u_0\in L^1(\R^2)$ and $v_0\in \dot{H}^1(\R^2)$.
There exist $\delta=\delta(\|u_0\|_{L^1(\R^2)},\e)>~0$ and $T=T(\|u_0\|_{L^1(\R^2)},\e)>0$ such that if $\|\nabla v_0\|_{L^2(\R^2)}<\delta$ there exist an integral solution $(u,v)$ of \eqref{KSu}-\eqref{KSv} with $u\in L^\infty((0,T);L^1(\R^2))$ and $|\nabla v|\in L^\infty((0,T);L^2(\R^2))$. Moreover, the total mass $M$ is conserved and there exists a constant $C=C(\e)$ such that  if $\|u_0\|_{L^1(\R^2)}<C(\e)$, the solution is global and
\beq\label{reg_effetct1}
t^{(1-\f1p)}\norm{u(t)}_{L^p(\R^2)}\le C(\|u_0\|_{L^1(\R^2)},\e)\,,\quad t>0\,,
\eeq
\beq\label{reg_effetct2}
t^{(\f12-\f1r)}\|\nabla v(t)\|_{L^r(\R^2)}\le C(\|u_0\|_{L^1(\R^2)},\e)\,,\quad t>0\,,
\eeq
for all $p\in[1,\infty]$ and $r\in[2,\infty]$. 
\end{theorem}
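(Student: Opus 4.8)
The plan is to solve the coupled Duhamel system \eqref{eq:Duhamel_u}--\eqref{eq:Duhamel_v} by a contraction mapping argument in a scaling-adapted complete metric space, and then to bootstrap the resulting solution to the full range of exponents. Fix a working pair $p\in(4/3,2)$ and $r=p/(p-1)\in(2,4)$, and for $T\in(0,\infty]$ introduce the time-weighted norms
\[
\norm{u}_{X_T}=\sup_{0<t<T}t^{1-\f1p}\norm{u(t)}_{L^p(\R^2)}\,,\qquad \norm{w}_{Y_T}=\sup_{0<t<T}t^{\f12-\f1r}\norm{w(t)}_{L^r(\R^2)}\,,
\]
regarding $(u,\nabla v)$ as the unknown. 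First I would record the linear estimates: by the $L^q$--$L^{q'}$ smoothing of the heat semigroup, $\norm{G(t)*u_0}_{X_T}\le C\norm{u_0}_{L^1(\R^2)}$ and, since $\nabla G(\e^{-1}t)*v_0=G(\e^{-1}t)*\nabla v_0$, the linear part of $\nabla v$ is bounded in $Y_T$ by $C(\e)\norm{\nabla v_0}_{L^2(\R^2)}$.

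The core of the argument is the pair of bilinear/linear estimates. For the drift term in \eqref{eq:Duhamel_u}, Hölder in the product $u\,\partial_i v$ (in $L^m$ with $\f1m=\f1p+\f1r$) followed by the gradient heat estimate $\norm{\partial_i G(\tau)*g}_{L^p}\le C\tau^{-1/2-(1/m-1/p)}\norm{g}_{L^m}$ gives
\[
\Bigl\|\sum_i\int_0^t\partial_i G(t-s)*(u\,\partial_i v)\,ds\Bigr\|_{L^p}\le C\int_0^t(t-s)^{-\f12-\f1r}\,s^{-(1-\f1p)-(\f12-\f1r)}\,ds\;\norm{u}_{X_T}\norm{\nabla v}_{Y_T}\,.
\]
The exponents are chosen precisely so that the $s$-integral is a convergent Beta integral producing the factor $t^{-(1-1/p)}$, whence this term is bounded in $X_T$ by $C\,\norm{u}_{X_T}\norm{\nabla v}_{Y_T}$; the same bookkeeping applied to \eqref{eq:Duhamel_v} yields $\norm{\nabla v}_{Y_T}\le C(\e)\norm{\nabla v_0}_{L^2(\R^2)}+C(\e)\norm{u}_{X_T}$ (the exponential factor only improving matters, and producing extra decay when $\alpha>0$). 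Substituting the second inequality into the first, the map $\Phi$ defined by the right-hand sides of \eqref{eq:Duhamel_u}--\eqref{eq:Duhamel_v} satisfies, on a ball of radius $R$,
\[
\norm{\Phi(u,\nabla v)}_{X_T\times Y_T}\le C\norm{G(t)*u_0}_{X_T}+C(\e)\norm{\nabla v_0}_{L^2(\R^2)}R+C(\e)R^2\,,
\]
together with the analogous Lipschitz bound; this is a self-map and contraction of the ball as soon as the linear part and the coefficient $C(\e)\norm{\nabla v_0}_{L^2(\R^2)}$ are small enough.

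For local existence with arbitrary $\norm{u_0}_{L^1(\R^2)}$ I would make $\norm{G(t)*u_0}_{X_T}$ small by the standard truncation device: split $u_0=w_0+z_0$ with $w_0\in L^1\cap L^p$ and $\norm{z_0}_{L^1(\R^2)}$ small; since $\sup_{0<t<T}t^{1-1/p}\norm{G(t)*w_0}_{L^p}\le T^{1-1/p}\norm{w_0}_{L^p}\to0$ as $T\to0$ while the $z_0$ part stays below $C\norm{z_0}_{L^1(\R^2)}$, the linear part can be driven below any prescribed threshold by choosing $T=T(\norm{u_0}_{L^1(\R^2)},\e)$ small. Requiring $\norm{\nabla v_0}_{L^2(\R^2)}<\delta=\delta(\norm{u_0}_{L^1(\R^2)},\e)$ then controls the linear-in-$u$ term and closes the fixed point on $(0,T)$; the regularity $u\in L^\infty((0,T);L^1(\R^2))$ and $|\nabla v|\in L^\infty((0,T);L^2(\R^2))$ follows from a companion $L^1$/$L^2$ Duhamel estimate, and mass conservation is immediate because $\int_{\R^2}\partial_i G(\tau)\,dx=0$ makes the nonlinear contribution to $\int_{\R^2} u(t)\,dx$ vanish. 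For global existence one runs the same fixed point with $T=\infty$: the short-time trick is unavailable, so the critical linear norm $\norm{G(t)*u_0}_{X_\infty}\approx\norm{u_0}_{L^1(\R^2)}$ must itself be small, which is exactly the hypothesis $\norm{u_0}_{L^1(\R^2)}<C(\e)$; the fixed point then lives in $X_\infty\times Y_\infty$, i.e.\ it satisfies \eqref{reg_effetct1}--\eqref{reg_effetct2} for the working pair $(p,r)$. Finally, the estimates are extended to every $p\in[1,\infty]$ and $r\in[2,\infty]$ by a bootstrap: writing $u(t)=G(t/2)*u(t/2)-\sum_i\int_{t/2}^t\partial_iG(t-s)*(u\,\partial_iv)\,ds$ and feeding in the decay already known at time $t/2$ propagates the optimal rate to the remaining exponents.

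The main obstacle is precisely the local existence step for large $\norm{u_0}_{L^1(\R^2)}$: because $L^1$ is scaling-critical for $u$, the linear evolution $G(t)*u_0$ does not become small in $X_T$ merely by shrinking $T$, so one cannot close the contraction naively. The truncation/density splitting above is what makes the argument work, and keeping the $\e$-dependence explicit throughout the $v$-estimates (through the rescaled kernel $G(\e^{-1}\cdot)$) is what produces the threshold $C(\e)$ and the $\delta$ depending on $\e$.
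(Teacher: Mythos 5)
Your proposal is correct and follows essentially the same route as the paper: a contraction mapping for the Duhamel system in time-weighted scaling-critical norms, the bilinear estimates reducing to convergent Beta integrals, local existence for arbitrary mass via the vanishing of $t^{1-\f1p}\|G(t)*u_0\|_{L^p(\R^2)}$ as $t\to0$ (your truncation/density splitting is precisely the proof of the fact the paper quotes from \cite{BC96}), and a shifted Duhamel bootstrap to reach all $p\in[1,\infty]$, $r\in[2,\infty]$. The only substantive divergence is the global step: you rerun the fixed point on $(0,\infty)$ under smallness of $\|u_0\|_{L^1(\R^2)}$, whereas the paper derives from \eqref{eq:Duhamel_u} a quadratic inequality for $f_p(t)=\sup_{s<t}s^{1-\f1p}\|u(s)\|_{L^p(\R^2)}$ and concludes by continuity from $f_p(0^+)=0$ that $f_p$ stays below the smaller root. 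The paper's continuation argument shows directly that the \emph{local} solution (which exists for large mass) is the one that persists globally, and it produces the explicit condition \eqref{eq:smallness_conditions3} with its $\e$-dependence, which underlies Remark \ref{rm:small e}; your version instead constructs a global fixed point that still needs a routine uniqueness-of-fixed-point remark to be identified with the local solution on $(0,T)$. The remaining differences (contracting in the pair $(u,\nabla v)$ with $p\in(4/3,2)$, $r=p'$, versus the paper's contraction in $u$ alone with $p\in(2,4)$ and $v$ defined from $u$) are immaterial, and your exponent bookkeeping checks out.
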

\begin{proof}
We shall prove the theorem in several steps. 
The classical regularizing effect of the heat kernel will be also employed in all of these steps as well as the notation below for the beta function
\[
B_{(x,y)}:=\int_0^1\sigma^{-x}(1-\sigma)^{-y}\,d\sigma\,,\qquad x,\,y\in(0,1)\,.
\]

{\sl First step : local existence.} For $p\in(2,4)$ arbitrarily fixed, $T>0$ and $\eta>0$ to be chosen later, let us define $E_p:= L^\infty((0,T);L^{1}(\R^2))\cap L^\infty_{loc}((0,T);L^p(\R^2))$ and 
\[
X_p:=\{u\in E_p\ :\ \norm{u(t)}_{L^{1}(\R^2)}\le A+1,\quad t^{(1-\f1p)}\norm{u(t)}_{L^p(\R^2)}\le\e^{(1-\f1p)}\eta,\ t\in(0,T)\}\,,
\]
where $A:=\|u_0\|_{L^1(\R^2)}$. Then, $(X_p, d_p)$ with the distance $d_p(u_1, u_2)$ defined as following
\[
d_p(u_1,u_2):=\e^{-(1-\f1p)}\sup_{0<t<T}t^{(1-\f1p)}\norm{u_1(t)-u_2(t)}_{L^p(\R^2)}\,,
\]
is a nonempty complete metric space. Next, for $u_0$ and $v_0$ given as in the statement of the theorem and for a fixed $u\in X_p$, we define $v$ as in  \eqref{eq:Duhamel_v}~and
\beq\label{eq:Tau(u)}
\caT(u)(t):=G(t)* u_0-\sum_{i}\int_0^t \partial_{i} G(t-s)*(u(s)\partial_{i} v(s))\ ds\,.
\eeq
The estimate of $\|\nabla v(t)\|_{L^r(\R^2)}$ from \eqref{eq:Duhamel_v} is crucial and given, for all $r\ge p$, by
\beq
\begin{split}
\|\nabla v(t)\|_{L^r(\R^2)}&\le C_0(r)\e^{(\f12-\f1r)}\,t^{-(\f12-\f1r)}\|\nabla v_0\|_{L^2(\R^2)}+\e^{-1}\,C_1(p,r)\int_0^t\f{\e^{\f1p-\f1r+\f12}}{(t-s)^{\f1p-\f1r+\f12}}\|u(s)\|_{L^p(\R^2)}\, ds\\
&\le\left[ C_0(r)t^{-(\f12-\f1r)}\|\nabla v_0\|_{L^2(\R^2)}+C_1(p,r)\,\eta\int_0^t\f1{(t-s)^{\f1p-\f1r+\f12}}\,\f1{s^{1-\f1p}}\,ds\right]\e^{(\f12-\f1r)}\\
&=\left[C_0(r)\|\nabla v_0\|_{L^2(\R^2)}+C_1(p,r)\,B_{(1-\f1p,\f1p-\f1r+\f12)}\,\eta\right]\e^{(\f12-\f1r)}\,t^{-(\f12-\f1r)}\,.
\end{split}
\label{est:grad v}
\eeq
This establishes \eqref{reg_effetct2} for $r\in[p,\infty]$ locally in time, after choosing $\eta$. In particular, for $r=\infty$, it holds
\beq\label{est:grad v infty}
\|\nabla v(t)\|_{L^\infty(\R^2)}\le\left[(8\pi)^{-\f12}\|\nabla v_0\|_{L^2(\R^2)}+C_1(p,\infty)\,B_{(1-\f1p,\f1p+\f12)}\,\eta\right]\e^{\f12}\,t^{-\f12}\,.
\eeq
Therefore, from \eqref{eq:Tau(u)} and \eqref{est:grad v infty}, we obtain
\beq\label{est:T(u)0}
\begin{split}
\|\caT(u)(t)\|_{L^{1}(\R^2)}&\le A+2\sqrt\pi(A+1)\int_0^t\f1{(t-s)^{\f12}}\,\|\nabla v(s)\|_{L^\infty(\R^2)}\,ds\\
&\le A+2\sqrt\pi(A+1)B_{(\f12,\f12)}\left[(8\pi)^{-\f12}\|\nabla v_0\|_{L^2(\R^2)}+C_1(p,\infty)\,B_{(1-\f1p,\f1p+\f12)}\eta\right]\e^{\f12}\\
&\le A+1
\end{split}
\eeq
provided
\beq\label{est:T(u)1}
(8\pi)^{-\f12}\|\nabla v_0\|_{L^2(\R^2)}+C_1(p,\infty)\,B_{(1-\f1p,\f1p+\f12)}\eta\le\left(2\sqrt\pi(A+1)B_{(\f12,\f12)}\e^{\f12}\right)^{-1}\,.
\eeq
Similarly, using \eqref{est:grad v} for $q$ fixed such that $\f1p\ge\f1q>\f12-\f1p$, it holds
\beq
\begin{split}
 t^{(1-\f 1{p})}&\|\caT(u)(t)\|_{L^p(\R^2)}\le  t^{(1-\f1{p})}\|G(t)*u_0\|_{L^p(\R^2)}\\
&\quad+2\,t^{(1-\f 1{p})}C_2(q)\int_0^t\f1{(t-s)^{\f 1q+\f12}}\,\|u(s)\|_{L^p(\R^2)}\|\nabla v(s)\|_{L^q(\R^2)}\,ds\\
&\le t^{(1-\f1p)}\|G(t)*u_0\|_{L^p(\R^2)}\\
&\quad+2\,\e^{(1-\f1p)}\eta\,C_2(q)B_{(\f32-\f1p-\f1q,\f1q+\f12)}\left[C_0(q)\|\nabla v_0\|_{L^2(\R^2)}+C_1(p,q)\,B_{(1-\f1p,\f1p-\f1q+\f12)}\eta\right]\e^{(\f12-\f1q)}\\
&\le t^{(1-\f1p)}\|G(t)*u_0\|_{L^p(\R^2)}+\f12\,\e^{(1-\f1p)}\eta
\end{split}
\label{est:T(u)2}
\eeq
provided
\beq
C_0(q)\|\nabla v_0\|_{L^2(\R^2)}+C_1(p,q)\,B_{(1-\f1p,\f1p-\f1q+\f12)}\,\eta\le
\left(4\,C_2(q)B_{(\f32-\f1p-\f1q,\f1q+\f12)}\e^{(\f12-\f1q)}\right)^{-1}\,.
\label{est:T(u)3}
\eeq
Furthermore, since $\lim_{t\to0}t^{(1-\f1p)}\|G(t)*u_0\|_{L^p(\R^2)}=0$, (see \cite{BC96}), after choosing $\eta$, we can take  $T>0$ such that  for $t\in[0,T]$ it holds 
\beq\label{def of T}
t^{(1-\f1p)}\|G(t)*u_0\|_{L^p(\R^2)}\le\f12\e^{(1-\f1p)}\eta\,.
\eeq

Next, taking $u_1,u_2\in X_p$, we have exactly as in \eqref{est:grad v}, for all $r\ge p$,
\[
\begin{split}
\|\nabla v_1(t)-\nabla v_2(t)\|_{L^r(\R^2)}&\le \e^{-1}C_1(p,r)\int_0^t\f{\e^{\f1p-\f1r+\f12}}{(t-s)^{\f1p-\f1r+\f12}}\|u_1(s)-u_2(s)\|_{L^p(\R^2)}\, ds\\
&\le C_1(p,r)\,B_{(1-\f1p,\f1p-\f1r+\f12)}\,\,d_p(u_1,u_2)\,\e^{(\f12-\f1r)}\,t^{-(\f12-\f1r)}\,,
\end{split}
\label{est:T(u)4}
\]
and exactly as in \eqref{est:T(u)2},  for $q$ fixed such that $\f1p\ge\f1q>\f12-\f1p$ ,
\beq
\begin{split}
t^{(1-\f1p)}&\|\caT(u_1)(t)-\caT(u_2)(t)\|_{L^p(\R^2)}\le 2\,t^{(1-\f1p)}C_2(q)\int_0^t\f1{{(t-s)}^{\f1q+\f12}}\,\|u_1(s)-u_2(s)\|_{L^p(\R^2)}\,\|\nabla v_1(s)\|_{L^q(\R^2)}\,ds\\
&\qquad+2\,t^{(1-\f1p)}C_2(q)\int_0^t\f1{(t-s)^{\f1q+\f12}}\,\|u_2(s)\|_{L^p(\R^2)}\,\|\nabla v_1(s)-\nabla v_2(s)\|_{L^q(\R^2)}\,ds\\
&\le 2\,C_2(q)\,\e^{(1-\f1p)}d_p(u_1,u_2)B_{(\f32-\f1p-\f1q,\f1q+\f12)}
\left[C_0(q)\|\nabla v_0\|_{L^2(\R^2)}+C_1(p,q)\,B_{(1-\f1p,\f1p-\f1q+\f12)}\,\eta\right]\e^{(\f12-\f1q)}\\
&\qquad+2\,C_2(q)\,\e^{(1-\f1p)}\,\eta\,B_{(\f32-\f1p-\f1q,\f1q+\f12)}\left[ C_1(p,q)\,B_{(1-\f1p,\f1p-\f1q+\f12)}\,\,d_p(u_1,u_2)\,\e^{(\f12-\f1q)}\right]\\
&=2\,C_2(q)\,\e^{(1-\f1p)}d_p(u_1,u_2)B_{(\f32-\f1p-\f1q,\f1q+\f12)}\left[C_0(q)\|\nabla v_0\|_{L^2(\R^2)}+2\,C_1(p,q)\,B_{(1-\f1p,\f1p-\f1q+\f12)}\,\eta\right]
\e^{(\f12-\f1q)}\\
&\le\e^{(1-\f1p)}d_p(u_1,u_2)
\end{split}
\label{est:T(u)5}
\eeq
provided
\beq\label{est:T(u)6}
C_0(q)\|\nabla v_0\|_{L^2(\R^2)}+2\,C_1(p,q)\,B_{(1-\f1p,\f1p-\f1q+\f12)}\,\eta\le\left(2\,C_2(q)\,B_{(\f32-\f1p-\f1q,\f1q+\f12)}\e^{(\f12-\f1q)}\right)^{-1}\,.
\eeq

To conclude, from \eqref{est:T(u)1}, \eqref{est:T(u)3} and \eqref{est:T(u)6}, we choose $\delta>0$ and $\eta>0$ such that if $\|\nabla v_0\|_{L^2(\R^2)}<\delta$, inequalities \eqref{est:T(u)0}, \eqref{est:T(u)2} and \eqref{est:T(u)5} are satisfied. Then, we choose $T$ such that \eqref{def of T} is also satisfied. Consequently, $\caT$ is a contraction from $X_p$ to $X_p$. The local existence of an integral solution follows applying the Banach fixed point Theorem. It is worth noticing that the choice of $\delta$, $\eta$ and $T$ depend on $\e$, $A$ and the previously fixed $p$ and $q$.

{\sl Second step : regularizing effects.} Let $p$, $\eta$ and $T$ be the same fixed in the previous step and  let $q\in(p,\infty)$. Using \eqref{est:grad v} with $r\ge p$ such that $\f12-\f1p<\f1r<\f12-\f1p+\f1q$, and the fact that $u\in X_p$, it holds for $t\in(0,T)$
\[\label{est:reg_effetcs}
\begin{split}
 t^{(1-\f 1q)}\|u(t)\|_{L^q(\R^2)}&\le  t^{(1-\f1q)}\|G(t)*u_0\|_{L^q(\R^2)}\\
 &\ +C\,t^{(1-\f 1q)}\int_0^t\f1{(t-s)^{\f1p+\f 1r-\f1q+\f12}}\,\|u(s)\|_{L^p(\R^2)}\|\nabla v(s)\|_{L^r(\R^2)}\,ds\le C(\e,A)\,.
\end{split}
\]
Therefore, \eqref{reg_effetct1} is established up to now for $q\in[p,\infty)$. For $q\in(1,p)$, \eqref{reg_effetct1} follows by interpolation. For $q=\infty$, taking the $L^\infty$ norm of the identity 
\[
u(2t)=G(t)*u(t)-\sum_{i}\int_0^t \partial_{i} G(t-s)*(u(s+t)\partial_{i} v(s+t))\ ds\,,
\]
where $2\,t\in(0,T)$, and using \eqref{est:grad v infty}, we obtain
\[
\|u(2t)\|_{L^\infty(\R^2)}\le \f Ct(A+1)+C(\e,A)\int_0^t\f1{(t-s)^{\f1p+\f12}}\f1{(s+t)^{1-\f1p}}\f1{(s+t)^{\f12}}\,ds\le C(\e,A)\,t^{-1}\,.
\]
Finally, \eqref{reg_effetct2} has been established in the previous step for $r\in[p,\infty]$. For $r\in[2,p)$, it follows easily by \eqref{reg_effetct1}.

{\sl Third step : global existence.} Let now $p>1$ be arbitrarily fixed. The identity \eqref{eq:Duhamel_u} satisfied by the solution $u$ implies that the function $f_p(t):=\sup_{s\in(0,t)}s^{(1-\f1p)}\norm{u(s)}_{L^p(\R^2)}$ satisfies for $t\in(0,T]$
\beq
\begin{split}
f_p(t)&\le C_3(p)\,A+2\,C_2(r)t^{(1-\f1p)}\,f_p(t)\int_0^t\f1{(t-s)^{\f 1{r}+\f12}}\,\f1{s^{1-\f1p}}\|\nabla v(s)\|_{L^r(\R^2)}\,ds\\
&\le C_3(p)\,A\\
&\quad+2\,C_2(r)\,f_p(t)\,B_{(\f32-\f1p-\f1r,\f1r+\f12)}
\left[C_0(r)\|\nabla v_0\|_{L^2(\R^2)}+C_1(p,r)\,\e^{(\f1p-1)} B_{(1-\f1p,\f1p-\f1r+\f12)}\,f_p(t)\right]\e^{(\f12-\f1r)}\,.
\end{split}
\label{est:fp}
\eeq
Here, we have estimate $\|\nabla v(s)\|_{L^r(\R^2)}$ as in \eqref{est:grad v}, chosen an appropriate $r>2$ (with respect to the fixed $p$) and take into account the increasing behavior of $f_p(t)$. Therefore, rearranging the terms in \eqref{est:fp} and renoting some constants for simplicity, it holds
\[
\e^{(\f1p-\f1r-\f12)}K_1(p,r)\,f_p^2(t)+[\e^{(\f12-\f1r)}K_2\|\nabla v_0\|_{L^2(\R^2)}-1]f_p(t)+C_3(p)\,A\ge0\,.
\]
Finally, since $\lim_{t\to0}f_p(t)=0$,  $f_p(t)$ stay upper bounded whenever
\beq
\e^{(\f12-\f1r)}K_2(p,r)\|\nabla v_0\|_{L^2(\R^2)}<1
\label{eq:smallness_conditions1}
\eeq
and
\beq
[\e^{(\f12-\f1r)}K_2(p,r)\|\nabla v_0\|_{L^2(\R^2)}-1]^2-4\,\e^{(\f1p-\f1r-\f12)}K_1(p,r)\,C_3(p)\,A>0\,.
\label{eq:smallness_conditions2}
\eeq
Noticing that conditions \eqref{eq:smallness_conditions1} and \eqref{eq:smallness_conditions2} are equivalent to
\beq
\left(4\,\e^{(\f1p-\f1r-\f12)}K_1(p,r)\,C_3(p)\,A\right)^{\f12}+\e^{(\f12-\f1r)}K_2(p,r)\|\nabla v_0\|_{L^2(\R^2)}<1\,,
\label{eq:smallness_conditions3}
\eeq
the global existence of the solution follows under the smallness condition \eqref{eq:smallness_conditions3}. 
\end{proof}

\begin{remark}\label{rm:small e}
It is worth noticing that all the conditions on $\|\nabla v_0\|_{L^2(\R^2)}$ established in the previous theorem, vanishes as $\e\to0$. On the other hand, since  we  necessarily have $\f1p-\f1r-\f12<0$ (cf \eqref{est:grad v}),  the smaller is $\varepsilon$ the more restrictive is the condition \eqref{eq:smallness_conditions3} that is required on $A$ in order to have a global solution.  However, for the same reason, the larger $\e$ becomes, the larger may the constant $A$ be chosen. Therefore the doubly parabolic system has solutions for initial data $(u_0, v_0)\in L^1(\R^2)\times \dot{H}^1(\R^2)$ with the mass as large as we like, whenever $\varepsilon $ is sufficiently large. A similar result has been proved in~\cite{BGK} for $v_0=0$ and $u_0$ a finite Radon measure on $\R^2$. 
\end{remark}

Next, we improve the previous theorem showing the optimal time decay of $\nabla u(t)$ and $\Delta v(t)$, for which we need the variant below of the Gronwall's lemma.

\begin{lemma}[\cite {BC96}]\label{lem:Gronwall}
Let $T>0$, $A\ge0$, $\alpha,\beta\in[0,1)$ and let $f$ be a nonnegative function with $f\in L^p(0,T)$ for some $p>1$ such that $p'\max\{\alpha,\beta\}<1$. Then, if $\phi\in L^\infty(0,T)$ satisfies
\[
\phi(t)\le A\,t^{-\alpha}+\int_0^t(t-s)^{-\beta}\,f(s)\,\phi(s)\,ds\,,\qquad a.e.\ t\in(0,T]\,,
\]
there exists $C=C(T,\alpha,\beta,p,\|f\|_{L^p(0,T)})$ such that 
\[
\phi(t)\le A\,C\,t^{-\alpha}\,,\qquad a.e.\ t\in(0,T]\,.
\]
\end{lemma}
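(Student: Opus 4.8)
The plan is to treat the inequality via the order-preserving linear operator
\[
(\Phi g)(t):=\int_0^t (t-s)^{-\beta} f(s)\,g(s)\,ds\,,
\]
so that the hypothesis reads $\phi\le\phi_0+\Phi\phi$ with $\phi_0(t):=A\,t^{-\alpha}$. Since $f\ge0$ and the kernel is nonnegative, $\Phi$ is monotone, and iterating the inequality $n+1$ times gives
\[
\phi(t)\le\sum_{k=0}^{n}(\Phi^k\phi_0)(t)+(\Phi^{n+1}\phi)(t)\,,\qquad t\in(0,T]\,.
\]
The proof then splits into controlling the series $\sum_k\Phi^k\phi_0$, which will produce the claimed bound $A\,C\,t^{-\alpha}$, and showing the remainder $\Phi^{n+1}\phi$ tends to $0$.

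For the series I would set $\gamma:=1/p'-\beta$, which is strictly positive because $p'\beta<1$, and prove by induction the estimate
\[
(\Phi^k\phi_0)(t)\le A\,b_k\,t^{-\alpha+k\gamma}\,,\qquad b_0=1\,,\quad b_k=b_{k-1}\,\|f\|_{L^p(0,T)}\,B\big(1+((k-1)\gamma-\alpha)p',\,1-\beta p'\big)^{1/p'}\,,
\]
where $B$ is the Euler Beta function. The inductive step is a single application of H\"older's inequality in the variable $s$ (with exponents $p,p'$) followed by the scaling identity $\int_0^t(t-s)^{-\beta p'}s^{\mu p'}ds=B(1+\mu p',1-\beta p')\,t^{1+(\mu-\beta)p'}$; the hypotheses $p'\alpha<1$ and $p'\beta<1$ are exactly what make the first Beta factor (the case $k=1$, $\mu=-\alpha$) finite, while every later exponent $-\alpha+k\gamma$ only improves the integrability, so the remaining factors are automatically admissible. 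One checks that each step raises the power of $t$ by exactly $\gamma$.

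The crux is the convergence of $\sum_k b_k\,T^{k\gamma}$. Here I would invoke Stirling's formula: writing the Beta factor as a ratio of Gamma functions and letting $k\to\infty$, one finds $B\big(1+((k-1)\gamma-\alpha)p',1-\beta p'\big)^{1/p'}=O(k^{-\gamma})$, so that $b_k$ decays like $(k!)^{-\gamma}$ up to a geometric factor; the series therefore converges to a finite constant $C=C(T,\alpha,\beta,p,\|f\|_{L^p(0,T)})$ with $\sum_k \Phi^k\phi_0(t)\le A\,C\,t^{-\alpha}$, uniformly on $(0,T]$. This Gamma-function asymptotics is the one genuinely delicate point; everything else is bookkeeping.

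Finally, for the remainder I would use that $\phi\in L^\infty(0,T)$, hence $\phi\le\|\phi\|_{L^\infty(0,T)}$ is dominated by a constant, and apply the very same inductive estimate with $\alpha$ replaced by $0$ to obtain $(\Phi^{n+1}\phi)(t)\le\|\phi\|_{L^\infty(0,T)}\,\tilde b_{n+1}\,t^{(n+1)\gamma}$, where $\tilde b_{n+1}\,T^{(n+1)\gamma}$ is the tail of the same convergent series and hence tends to $0$ as $n\to\infty$. Letting $n\to\infty$ in the iterated inequality yields $\phi(t)\le A\,C\,t^{-\alpha}$ for a.e. $t\in(0,T]$, as claimed. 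An alternative to the full series is to first absorb the integral term on a short interval $(0,\tau_0]$, on which $\|f\|_{L^p(0,t)}\,t^\gamma$ is small, getting $\phi\le2A\,t^{-\alpha}$ there, and then propagate the bound across $[0,T]$ in finitely many such steps; I prefer the series approach since it handles all of $(0,T]$ at once and makes the dependence of $C$ transparent.
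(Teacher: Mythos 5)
Your proof is correct. Note that the paper itself gives no proof of this lemma --- it is quoted verbatim from \cite{BC96} --- so there is nothing internal to compare against; your argument is the standard iteration (Neumann-series) proof of the singular Gronwall inequality: the monotone iteration $\phi\le\sum_{k\le n}\Phi^k\phi_0+\Phi^{n+1}\phi$, the inductive H\"older/Beta-function bound with the gain $\gamma=1/p'-\beta>0$ per step, the $(k!)^{-\gamma}$ decay of the coefficients from the Gamma-function asymptotics, and the vanishing remainder, all check out, and the hypotheses $p'\alpha<1$, $p'\beta<1$ are used exactly where needed (finiteness of the first Beta factor and of the kernel's $L^{p'}$ norm, respectively).
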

\begin{proposition}\label{pr:moreestimates}
The global integral solution $(u,v)$ of \eqref{KSu}-\eqref{KSv} given by Theorem \ref{th:existence critical} satisfies
\beq
\|\nabla u(t)\|_{L^p(\R^2)}\le C\,t^{-(1-\f1p)-\f12}\,,\qquad t>0\,,
\label{eq:grad u}
\eeq
\beq
\|\Delta v(t)\|_{L^r(\R^2)}\le C\,t^{-(\f12-\f1r)-\f12}\,,\qquad t>0\,,
\label{eq:delta v}
\eeq
for all $p\in[1,\infty]$ and $r\in[2,\infty]$,  where $C=C(\|u_{0}\|_{L^1(\R^2)},\e)>0$.
\end{proposition}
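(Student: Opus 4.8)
The plan is to differentiate the integral formulations \eqref{eq:Duhamel_u}-\eqref{eq:Duhamel_v} and to combine the smoothing of the heat semigroup with the decay rates \eqref{reg_effetct1}-\eqref{reg_effetct2} already granted by Theorem~\ref{th:existence critical}. The only analytic input is the standard bound $\|\nabla^k G(t)*f\|_{L^p(\R^2)}\le C\,t^{-k/2-(\f1m-\f1p)}\|f\|_{L^m(\R^2)}$ for $1\le m\le p\le\infty$ and $k\in\{1,2\}$, used together with Hölder's inequality to factor the nonlinearity $u\,\nabla v$.

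I would treat \eqref{eq:delta v} first, as it reduces to \eqref{eq:grad u}. Applying $\Delta$ to \eqref{eq:Duhamel_v} and moving one derivative from the kernel onto the convolved factor gives $\Delta v(t)=e^{-(\alpha/\e)t}\nabla G(\e^{-1}t)*\nabla v_0+\e^{-1}\int_0^t e^{-(\alpha/\e)(t-s)}\nabla G(\e^{-1}(t-s))*\nabla u(s)\,ds$. Since $v_0\in\dot H^1(\R^2)$, the first term is bounded by $C\,t^{-(\f12-\f1r)-\f12}\|\nabla v_0\|_{L^2(\R^2)}$. The remaining kernel is only of order one, hence weakly singular, and Young's inequality leaves $\int_0^t(t-s)^{-\f12-(\f1m-\f1r)}\|\nabla u(s)\|_{L^m(\R^2)}\,ds$ with $m<2$; once \eqref{eq:grad u} holds, this is a Beta integral whose exponents add up exactly to the rate $t^{-(\f12-\f1r)-\f12}$ (the choice $m<2$ being what makes the integrand summable at $s=0$).

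The heart of the matter is \eqref{eq:grad u}. Differentiating \eqref{eq:Duhamel_u} yields the linear term $\nabla G(t)*u_0$, controlled by $C\,t^{-(1-\f1p)-\f12}\|u_0\|_{L^1(\R^2)}$, plus a nonlinear term carrying the \emph{second}-order kernel $\partial_i\nabla G(t-s)$. This kernel is the main obstacle: it behaves like $(t-s)^{-1}$ at the diagonal $s=t$ and is therefore not integrable, so the nonlinearity cannot be estimated in a single stroke. I would split the time integral at $t/2$. On $(0,t/2)$ one has $t-s\ge t/2$, so the full second-order kernel may be retained; bounding $\|u(s)\,\partial_iv(s)\|_{L^m(\R^2)}\le\|u(s)\|_{L^a}\|\nabla v(s)\|_{L^b}$ by \eqref{reg_effetct1}-\eqref{reg_effetct2} with $m<2$ makes the $s$-integral converge at $s=0$ and reproduces the rate $t^{-(1-\f1p)-\f12}$. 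On $(t/2,t)$ I would integrate by parts in space, transferring one derivative onto the nonlinearity, $\sum_i\partial_i\nabla G(t-s)*(u\,\partial_iv)=\nabla G(t-s)*(\nabla u\cdot\nabla v+u\,\Delta v)$; equivalently, one writes the shifted identity $\nabla u(t)=\nabla G(t/2)*u(t/2)-\int_{t/2}^t\nabla G(t-s)*(\nabla u\cdot\nabla v+u\,\Delta v)(s)\,ds$. The kernel is now only $(t-s)^{-1/2}$, integrable on $(t/2,t)$, and $\nabla G(t/2)*u(t/2)$ again gives the right rate — but $\nabla u$ and $\Delta v$ now reappear under the integral.

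Closing this loop is the delicate step. Using $\|\nabla v(s)\|_{L^\infty}$ and $\|u(s)\|_{L^a}$ from \eqref{reg_effetct1}-\eqref{reg_effetct2}, the near-diagonal contribution is dominated by a Volterra integral of $\|\nabla u(s)\|_{L^p}$ and $\|\Delta v(s)\|_{L^b}$ against the weakly singular kernel $(t-s)^{-1/2}$; substituting the representation of $\Delta v$ from the second paragraph eliminates $\Delta v$ and produces a single integral inequality for the weighted quantity $t^{(1-\f1p)+\f12}\|\nabla u(t)\|_{L^p(\R^2)}$. This inequality is precisely of the form treated by the Gronwall Lemma~\ref{lem:Gronwall}, which yields the uniform bound and hence \eqref{eq:grad u}; reinserting it into the representation of $\Delta v$ gives \eqref{eq:delta v}. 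Both estimates are first obtained on a convenient range of exponents and then extended to all $p\in[1,\infty]$, $r\in[2,\infty]$ by interpolation and, at the endpoints $p=\infty$, $r=\infty$, by the shifted identity $u(2t)=G(t)*u(t)-\sum_i\int_0^t\partial_iG(t-s)*(u(s+t)\partial_iv(s+t))\,ds$ already exploited in Theorem~\ref{th:existence critical}. The principal technical difficulty I anticipate is the tension between the two singularities — the diagonal one at $s=t$, which forces lowering the kernel order, and the one at $s=0$, which then constrains the Hölder exponent to $m<2$ — so that the admissible exponents must be chosen to satisfy both the Young/Beta conditions and the hypotheses of Lemma~\ref{lem:Gronwall} simultaneously.
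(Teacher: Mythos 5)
Your overall architecture --- differentiate the Duhamel formulas, reduce \eqref{eq:delta v} to \eqref{eq:grad u}, transfer one derivative onto the nonlinearity near the diagonal so that the kernel drops from order $(t-s)^{-1}$ to $(t-s)^{-1/2}$, and close a coupled Volterra inequality for $\nabla u$ and $\Delta v$ via Lemma~\ref{lem:Gronwall} --- is exactly the skeleton of the paper's argument (compare \eqref{eq-delta v lambda}, \eqref{eq:est nabla u lambda} and \eqref{eq:phi_lambda}). But there is a genuine gap in the last step. Lemma~\ref{lem:Gronwall} is a \emph{local} Gronwall lemma: its constant is $C(T,\alpha,\beta,p,\|f\|_{L^p(0,T)})$ and it only yields $\phi(t)\le A\,C\,t^{-\alpha}$ on a finite interval $(0,T]$. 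Applied to your weighted quantity $t^{(1-\f1p)+\f12}\|\nabla u(t)\|_{L^p(\R^2)}$ it cannot produce a bound uniform over all $t>0$. Worse, the inequality you would actually obtain does not satisfy the lemma's hypotheses: without a time shift the kernel is $(t-s)^{-1/2}s^{-1/2}$ (the $s^{-1/2}$ coming from $\|\nabla v(s)\|_{L^\infty}$), and the condition $p'\max\{\alpha,\beta\}<1$ with $\beta=\f12$ forces $p>2$, while $s^{-1/2}\in L^p(0,T)$ forces $p<2$. With your $t/2$-splitting the singularity at $s=0$ disappears, but the resulting inequality has the delayed form $\Phi(t)\le C+C'\sup_{t/2\le s\le t}\Phi(s)$ with $C'=C\int_{t/2}^t(t-s)^{-1/2}s^{-1/2}\,ds$ of order one, which cannot be absorbed and is not of the form covered by Lemma~\ref{lem:Gronwall}; iterating it over dyadic intervals would lose the optimal rate. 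Classical singular Gronwall estimates on $(0,\infty)$ give Mittag-Leffler, i.e.\ exponentially growing, constants, so "choosing the exponents carefully" cannot repair this.

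The missing idea is the parabolic rescaling. The paper works with $(u_\lambda,v_\lambda)$ from \eqref{eq:scaling}, for which the a priori bounds \eqref{reg_effetct1_lambda}--\eqref{reg_effetct2_lambda} are \emph{uniform in $\lambda$}; it shifts time by a fixed $\tau>0$ so that all the decay factors $s^{-1/2}$, $s^{-(1-\f1p)}$ become the constants $f_p(\tau)$ and Lemma~\ref{lem:Gronwall} legitimately applies on $(0,T]$ with a constant $C(\tau,T)$; and it then sets $\tau=t=T=1$ and lets $\lambda$ range over $(0,\infty)$, so that the single local estimate $\|\nabla u_\lambda(2)\|_{L^p}+\|\Delta v_\lambda(2)\|_{L^\infty}\le C$ converts, upon undoing the scaling, into the global optimal decay $\|\nabla u(2\lambda^2)\|_{L^p}\le C\lambda^{-2(\f32-\f1p)}$ for every $\lambda$. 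Without this scaling step (or an equivalent explicit tracking of how the Gronwall constant depends on the length of the time interval), your argument only proves \eqref{eq:grad u}--\eqref{eq:delta v} on bounded time intervals with a $T$-dependent constant, which is strictly weaker than the statement.
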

\begin{proof}
We shall make use of the rescaled solution $(u_\lambda,v_\lambda)$ defined in \eqref{eq:scaling} and of the regularizing effects \eqref{reg_effetct1} and \eqref{reg_effetct2}, giving respectively the estimates below, with constants $C$ independent of $\lambda$, 
\beq
\|u_\lambda(t)\|_{L^p(\R^2)}=\lambda^{2-\f2p}\|u(\lambda^2t)\|_{L^p(\R^2)}\le C\,t^{-(1-\f1p)}\,,\quad t>0\,,\ p\in[1,\infty]\,,
\label{reg_effetct1_lambda}
\eeq
and
\beq
\|\nabla v_\lambda(t)\|_{L^r(\R^2)}=\lambda^{1-\f2r}\|\nabla v(\lambda^2t)\|_{L^r(\R^2)}\le C\,t^{-(\f12-\f1r)}\,,\quad t>0\,,\ r\in[2,\infty]\,.
\label{reg_effetct2_lambda}
\eeq

Assume $p>2$ and let $t>0$ and $\tau>0$ be arbitrarily fixed. Then, taking the $L^\infty$ norm of the identity
\beq\label{eq-delta v lambda}
\Delta v_\lambda(t+\tau)=e^{-(\alpha/\e)\,t}\,\sum_i\partial_i G(\e^{-1}t)*\partial_iv_\lambda(\tau) +\e^{-1}\sum_i\int_0^t e^{-(\alpha/\e)(t-s)}\partial_i G(\e^{-1}(t-s))*\partial_i u_\lambda(s+\tau)ds\,,
\eeq
and using \eqref{reg_effetct2_lambda} with $r=\infty$, we get 
\[
\|\Delta v_\lambda(t+\tau)\|_{L^\infty(\R^2)}\le C\,t^{-\f 12}\tau^{-\f 12}+C\int_0^t\f1{(t-s)^{\f1p+\f12}}\|\nabla u_\lambda(s+\tau)\|_{L^p(\R^2)}\,ds\,.
\]
On the other hand, taking the $L^p$ norm of
\[
\nabla u_\lambda(t+\tau)=\nabla G(t)* u_\lambda(\tau)-\sum_{i}\int_0^t \partial_{i} G(t-s)*\nabla (u_\lambda(s+\tau)\partial_{i} v_\lambda(s+\tau))\ ds\,,
\]
and using \eqref{reg_effetct1_lambda} and \eqref{reg_effetct2_lambda} again, we obtain
\beq
\begin{split}
\|\nabla u_\lambda(t+\tau)\|_{L^p(\R^2)}&\le C\,t^{-\f 12}\tau^{-(1-\f 1p)}+C\int_0^t\f1{(t-s)^{\f12}}\|\nabla u_\lambda(s+\tau)\|_{L^p(\R^2)}\|\nabla v_\lambda(s+\tau)\|_{L^\infty(\R^2)}\,ds\\
&\qquad+C\int_0^t\f1{(t-s)^{\f12}}\|u_\lambda(s+\tau)\|_{L^p(\R^2)}\|\Delta v_\lambda(s+\tau)\|_{L^\infty(\R^2)}\,ds\\
&\le C\,t^{-\f 12}\tau^{-(1-\f 1p)}+C\int_0^t\f1{(t-s)^{\f12}}\f1{(s+\tau)^{\f12}}\|\nabla u_\lambda(s+\tau)\|_{L^p(\R^2)}ds\\
&\qquad+C\int_0^t\f1{(t-s)^{\f12}}\f1{(s+\tau)^{1-\f1p}}\|\Delta v_\lambda(s+\tau)\|_{L^\infty(\R^2)}\,ds\,.
\end{split}
\label{eq:est nabla u lambda}
\eeq
Therefore, the function $\phi_\lambda(t,\tau):=\|\nabla u_\lambda(t+\tau)\|_{L^p(\R^2)}+\|\Delta v_\lambda(t+\tau)\|_{L^\infty(\R^2)}$ satisfies the inequality
\beq
\phi_\lambda(t,\tau)\le C\,f_p(\tau)\,t^{-\f 12}+ C\,f_p(\tau)\int_0^t\f1{(t-s)^{\f12}}\phi_\lambda(s,\tau)\,ds+C\,\int_0^t\f1{(t-s)^{\f1p+\f12}}\phi_\lambda(s,\tau)\,ds\,,
\label{eq:phi_lambda}
\eeq
for any $t>0$ and $\tau>0$, where $f_p(\tau):=(\tau^{-\f 12}+\tau^{-(1-\f 1p)})$. Applying Lemma \ref{lem:Gronwall} to \eqref{eq:phi_lambda} with respect to $t\in(0,T]$, $T>0$ arbitrarily fixed, we then get for any $\tau>0$
\beq
\|\nabla u_\lambda(t+\tau)\|_{L^p(\R^2)}+\|\Delta v_\lambda(t+\tau)\|_{L^\infty(\R^2)}\le C(\tau,T)\,t^{-\f12}\,,\qquad\ t\in(0,T]\,.
\label{est u_lambda v_lambda}
\eeq
Undoing the scaling and choosing $\tau=t=T=1$, \eqref{est u_lambda v_lambda} gives us
\[
\|\nabla u_\lambda(2)\|_{L^p(\R^2)}=\lambda^{2(\f32-\f1p)}\|\nabla u(2\lambda^2)\|_{L^p(\R^2)}\le C
\]
and 
\[
\|\Delta v_\lambda(2)\|_{L^\infty(\R^2)}=\lambda^2\|\Delta v(2\lambda^2)\|_{L^\infty(\R^2)}\le C\,,
\]
for any $\lambda>0$.
Hence, \eqref{eq:grad u} for $p>2$ and \eqref{eq:delta v}  for $r=\infty$ follow.

For $p\in[1,2]$, it is sufficient to plug the $L^\infty$ bound \eqref{est u_lambda v_lambda} for $\Delta v_\lambda$ into the r.h.s. of \eqref{eq:est nabla u lambda} to obtain, for $t\in(0,T]$ and $\tau>0$,
\[
\|\nabla u_\lambda(t+\tau)\|_{L^p(\R^2)}\le C(\tau,T)\,\tau^{-(1-\f 1p)}(t^{-\f 12}+1)+
C\,\tau^{-\f12}\int_0^t\f1{(t-s)^{\f12}}\|\nabla u_\lambda(s+\tau)\|_{L^p(\R^2)}ds\,.
\]
Applying Lemma \ref{lem:Gronwall} again and undoing the scaling as before, give us \eqref{eq:grad u}. 

Finally, taking the $L^r$ norm of \eqref{eq-delta v lambda}, with $r\in [2,\infty)$ and using \eqref{reg_effetct2_lambda}, \eqref{est u_lambda v_lambda}, 
we get
\[
\begin{split}
\|\Delta v_\lambda(t+\tau)\|_{L^r(\R^2)}&\le C\,t^{-\f 12}\tau^{-(\f12-\f 1r)}+C\int_0^t\f1{(t-s)^{\f 1p-\f1r+\f12}}\|\nabla u_\lambda(s+\tau)\|_{L^p(\R^2)}\,ds\\
&\le C\,t^{-\f 12}\tau^{-(\f12-\f 1r)}+C(\tau,T)\,t^{(\f1r-\f1p)}\,,
\end{split}
\]
where $t\in(0,T]$ and $p>2$. Hence, for any $\lambda>0$,
\[
\|\Delta v_\lambda(2)\|_{L^r(\R^2)}=\lambda^{2(1-\f1r)}\|\Delta v(2\lambda^2)\|_{L^r(\R^2)}\le C\,,
\]
and the theorem is proved.
\end{proof}
\begin{remark}\label{Rk:global existence}
Integral solutions have been studied by several authors. Global existence of such solutions in the case $\e=1$ was obtained: in~\cite{B98} with $u_0$ a finite measure with small mass  and $|\nabla v_0|\in L^2(\R^2)$; in \cite{NSU03} for  $u_0,\,v_0$ and $|\nabla v_0|$ in $(L^1\cap L^\infty)(\R^2)$, $u_0$ small in $L^1$, $|\nabla v_0|$ small in $L^1\cap L^\infty$, together with the optimal decay of $\|u(t)\|_{L^p(\R^2)}$; in  \cite{N06} with $u_0\in L^1(\R^2)$ and $|\nabla v_0|\in L^2(\R^2)$ small, together with the optimal decay rate of $\|u(t)\|_{L^p(\R^2)}$ for $p\in (4/3, 2)$; in \cite{Ferreira} with $u_0\in\dot{B}^{-2(1-\f1r)}_{r,\infty}$ such that $\sup _{ t>0 }t^{(1-1/r)}\|G(t)u_0\|_{L^r(\R^2)}$ is small  for some $r\in (1, 2)$, and $v_0$ small in the homogeneous Besov space $\dot{B} _{ \infty, \infty }^0$, together with the optimal decay rate for $\|u(t)\|_{L^p(\R^2)}$ if $p\in[r,\infty)$ and for $\|\nabla v(t)\|_{L^\infty(\R^2)}$.

In the case $\e>0$, global existence of integral solutions was proved in \cite{BiBr} for $u_0$ tempered distribution  such that $\sup _{t>0,x\in \R^2 }(t+|x|^2)|G(t)u_0(x)|$ is small  and $v_0=0$; the function $u(t)$ was then shown to be such that $\sup _{t>0,x\in\R^2}(t+|x|^2)|u(t,x)|$ is bounded. More recently, the case  $v_0=0$ was considered again in \cite{BGK}. The authors proved that for $u_0$ any finite Radon measure there exists an $\varepsilon (u_0)>0$ such that for all $\e\ge\e(u_0)$, the system has a global integral solution $(u, v)$, and $u(t)$ satisfies the optimal $L^p$ time decay rates for all $p\in[1,\infty]$.
\end{remark}

We conclude this section showing the continuous dependence of the solution $(u,v)$ given by Theorem~\ref{th:existence critical} with respect to the initial data. This continuity result  shall imply the uniqueness and the positivity of the solution itself. 

\begin{theorem}[Continuous dependence]\label{th:continuous dependence}
Let $\e>0$, $\alpha\ge0$, and let $u_0^i\in L^1(\R^2)$ and 
$v_0^i\in \dot{H}^1(\R^2)$,
$i=1,2$, be two initial data sufficiently small so that the corresponding solutions $(u^i,v^i)$ of \eqref{eq:Duhamel_u}-\eqref{eq:Duhamel_v} are global. Then, for any $p\in[1,\infty]$ and $r\in[2,\infty]$, there exists $C=C(p,r)>0$ independent of $t$, such that for $t>0$ it holds
\beq
t^{(1-\f1p)}\|u^1(t)-u^2(t)\|_{L^p(\R^2)}+t^{(\f12-\f1r)}\|\nabla v^1(t)-\nabla v^2(t)\|_{L^r(\R^2)}\le C\left(\|u_0^1-u_0^2\|_{L^1(\R^2)}+\,\|\nabla v_0^1-\nabla v_0^2\|_{L^2(\R^2)}\right)\,.
\label{in:continuity}
\eeq
\end{theorem}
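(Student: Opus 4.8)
The plan is to subtract the two Duhamel representations \eqref{eq:Duhamel_u}--\eqref{eq:Duhamel_v} and to rerun, on the differences, the contraction estimates already carried out in the local existence step of Theorem~\ref{th:existence critical}; the only genuinely new ingredient is the forcing produced by the differences of the initial data. Writing $w:=u^1-u^2$ and $z:=v^1-v^2$, subtraction of \eqref{eq:Duhamel_v} gives
\[
\nabla z(t)=e^{-(\alpha/\e)t}\,G(\e^{-1}t)*\nabla(v_0^1-v_0^2)+\e^{-1}\int_0^t e^{-(\alpha/\e)(t-s)}\nabla G(\e^{-1}(t-s))*w(s)\,ds\,,
\]
while subtraction of \eqref{eq:Duhamel_u}, together with the algebraic identity $u^1\p_i v^1-u^2\p_i v^2=u^1\,\p_i z+w\,\p_i v^2$, gives
\[
w(t)=G(t)*(u_0^1-u_0^2)-\sum_i\int_0^t\p_i G(t-s)*\bigl(u^1(s)\,\p_i z(s)+w(s)\,\p_i v^2(s)\bigr)\,ds\,.
\]
This is precisely the structure of the contraction computation leading to \eqref{est:T(u)5}, now made inhomogeneous by the data differences.

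First I would fix $p_0\in(2,4)$ and an auxiliary exponent $q$ with $\f1{p_0}\ge\f1q>\f12-\f1{p_0}$ exactly as in that step, and set
\[
D(t):=\sup_{0<s<t}\Bigl[\,s^{1-\f1{p_0}}\norm{w(s)}_{L^{p_0}(\R^2)}+s^{\f12-\f1q}\norm{\nabla z(s)}_{L^q(\R^2)}\,\Bigr]\,.
\]
By the uniform decay estimates \eqref{reg_effetct1}--\eqref{reg_effetct2} applied to both solutions, $D$ is finite and bounded uniformly in $t$, which is what makes the absorption below legitimate. The forcing terms are controlled by the heat semigroup: $t^{1-\f1{p_0}}\norm{G(t)*(u_0^1-u_0^2)}_{L^{p_0}(\R^2)}\le C\,\norm{u_0^1-u_0^2}_{L^1(\R^2)}$, and the leading term of $\nabla z$ is bounded by $C\,t^{-(\f12-\f1q)}\norm{\nabla v_0^1-\nabla v_0^2}_{L^2(\R^2)}$. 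The remaining bilinear contributions are estimated verbatim as in \eqref{est:T(u)5}: the role of $u_1-u_2$ is now played by $(w,\nabla z)$, and the factors $\norm{u^1(s)}_{L^{p_0}}$, $\norm{\nabla v^2(s)}_{L^q}$ are controlled by the a priori bounds $\sup_s s^{1-\f1{p_0}}\norm{u^i(s)}_{L^{p_0}}\le C$ and $\sup_s s^{\f12-\f1q}\norm{\nabla v^i(s)}_{L^q}\le C$ valid for the global solutions. They produce a factor $\theta\,D(t)$ with a contraction constant $\theta<1$. Hence
\[
D(t)\le C\bigl(\norm{u_0^1-u_0^2}_{L^1(\R^2)}+\norm{\nabla v_0^1-\nabla v_0^2}_{L^2(\R^2)}\bigr)+\theta\,D(t)\,,
\]
and, $D$ being a priori finite and $t$--uniform, absorbing $\theta D(t)$ gives the desired bound for the pair $(p_0,q)$ with a constant independent of $t$.

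It then remains to propagate the estimate to every $p\in[1,\infty]$ and $r\in[2,\infty]$, which is done exactly as in the second and third steps of Theorem~\ref{th:existence critical}. For $\nabla z$ and any $r\in[2,\infty]$ one inserts the bound $\norm{w(s)}_{L^{p_0}(\R^2)}\le C\,s^{-(1-\f1{p_0})}(\text{data})$ into the convolution term of the $\nabla z$--equation; the resulting beta integral converges, since $\f1{p_0}-\f1r+\f12<1$ for $p_0>2$, and reproduces the weight $t^{-(\f12-\f1r)}$. For $w$ in $L^1$ one takes the $L^1$ norm of the $w$--equation, bounds the two bilinear terms by $\norm{u^1}_{L^1}\norm{\nabla z}_{L^\infty}$ and $\norm{w}_{L^1}\norm{\nabla v^2}_{L^\infty}$ with $\norm{\nabla v^2(s)}_{L^\infty}\sim s^{-\f12}$, and absorbs the $\norm{w}_{L^1}$ term as above (using the already established $L^\infty$ bound on $\nabla z$), the cross term being uniformly bounded through $B_{(\f12,\f12)}$. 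The exponents $p\in(1,p_0)$ then follow by interpolation between $L^1$ and $L^{p_0}$, the regularizing bootstrap of the second step gives $p\in[p_0,\infty)$, and $p=\infty$ is obtained from the splitting of $w(2t)$ used there.

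I expect the only real obstacle to be twofold and entirely concentrated in the second paragraph. First, verifying that the bilinear terms actually contract, i.e. that the smallness of $(u_0^i,v_0^i)$ forces $\theta<1$: this is where the hypothesis ``sufficiently small'' is genuinely used, and it reduces to the same inequalities \eqref{est:T(u)6} and \eqref{eq:smallness_conditions3} already imposed for global existence. Second, justifying the absorption of $\theta D(t)$, for which the a priori uniform finiteness of $D$ supplied by \eqref{reg_effetct1}--\eqref{reg_effetct2} and Proposition~\ref{pr:moreestimates} is essential, since otherwise one could not subtract $\theta D(t)$ from both sides. The remaining work---the bookkeeping of the powers of $\e$ and the admissible ranges of $q$ and $r$ ensuring every beta integral converges---is routine and mirrors the existence proof.
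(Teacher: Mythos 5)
Your overall strategy---subtract the two Duhamel formulas, split the bilinear term as $u^1\,\partial_i z+w\,\partial_i v^2$, and close a weighted space--time norm $D(t)$ by absorption---is a legitimate and classical route, and it is genuinely different from the paper's. But the step you yourself single out as the crux is exactly where the argument has a gap: the claim that the contraction factor $\theta<1$ ``reduces to the same inequalities \eqref{est:T(u)6} and \eqref{eq:smallness_conditions3} already imposed for global existence'' is not justified. Condition \eqref{est:T(u)6} involves the parameter $\eta$, which controls the weighted norms of the solutions only on the local existence interval $(0,T)$; for $t>T$ the quantities entering your $\theta$, namely $\sup_{s>0}s^{1-1/p_0}\norm{u^i(s)}_{L^{p_0}}$ and $\sup_{s>0}s^{1/2-1/q}\norm{\nabla v^i(s)}_{L^q}$, are only bounded by the root of the quadratic inequality coming from \eqref{est:fp}, i.e.\ by the constants $C(\norm{u_0}_{L^1},\e)$ of \eqref{reg_effetct1}--\eqref{reg_effetct2}. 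Tracking the constants, condition \eqref{eq:smallness_conditions3} yields bounds of the form $\e^{\cdot}K_1 f_{p}\le\frac12(1-\e^{\cdot}K_2\norm{\nabla v_0}_{L^2})$, and summing the contributions of the two solutions (with, moreover, beta-function constants that differ from those of the existence proof because the exponents $p_0,q$ differ from the $p,r$ used there) gives a coefficient that can be arbitrarily close to, or exceed, $1$. To make your absorption work you would have to strengthen the smallness hypothesis on the data by an explicit factor, which changes the statement: the theorem only assumes the data are small enough for global existence.

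The paper avoids this issue entirely. Its proof makes no global-in-time absorption and needs no contraction constant: it estimates the differences $u^1_\lambda-u^2_\lambda$ and $\nabla v^1_\lambda-\nabla v^2_\lambda$ for the \emph{rescaled} solutions on a time window $(\tau,\tau+T]$ bounded away from $t=0$, where the coefficients $\norm{\nabla v^1_\lambda(s+\tau)}_{L^\infty}\le C\tau^{-1/2}$ and $\norm{u^2_\lambda(s+\tau)}_{L^p}\le C\tau^{-(1-1/p)}$ are merely bounded (not small), and then applies the singular Gronwall Lemma~\ref{lem:Gronwall}, exactly as in Proposition~\ref{pr:moreestimates}. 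The resulting constant depends on $\tau$ and $T$ but not on $\lambda$; choosing $\tau=t=T=1$ and undoing the scaling produces the $t$-independent constant in \eqref{in:continuity}. I would suggest you either adopt this Gronwall-plus-rescaling mechanism, or explicitly add the extra smallness condition your absorption requires and acknowledge that it may be more restrictive than the one guaranteeing global existence. Your propagation of the estimate to all $p\in[1,\infty]$ and $r\in[2,\infty]$ (interpolation, the $w(2t)$ splitting for $p=\infty$, and the convolution estimate for $\nabla z$) is fine and matches what the paper does.
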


\begin{corollary}[Uniqueness and positivity]\label{crl:uniqueness positivity}
The global solution $(u,v)$ given by Theorem~\ref{th:existence critical} is unique. Moreover, it is non-negative whenever $u_0$ and $v_0$ are non-negative.
\end{corollary}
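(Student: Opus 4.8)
The plan is to obtain both assertions from the continuous dependence estimate \eqref{in:continuity} of Theorem~\ref{th:continuous dependence}, reading uniqueness off directly and treating positivity by an approximation argument. For uniqueness, I would apply Theorem~\ref{th:continuous dependence} to two global solutions $(u^1,v^1)$ and $(u^2,v^2)$ sharing the same datum, that is with $u_0^1=u_0^2=u_0$ and $v_0^1=v_0^2=v_0$. The right-hand side of \eqref{in:continuity} then vanishes identically, so that $t^{(1-\f1p)}\|u^1(t)-u^2(t)\|_{L^p(\R^2)}=0$ and $t^{(\f12-\f1r)}\|\nabla v^1(t)-\nabla v^2(t)\|_{L^r(\R^2)}=0$ for every $t>0$. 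Hence $u^1(t)=u^2(t)$ and $\nabla v^1(t)=\nabla v^2(t)$ for all $t>0$; since the two functions $v^i$ live in $\dot{H}^1(\R^2)$, on which the gradient seminorm is a norm, this also forces $v^1=v^2$.

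For positivity the first remark is that, once $u\ge0$ is known, the non-negativity of $v$ is automatic: in the Duhamel formula \eqref{eq:Duhamel_v} the heat kernel $G$ and the factors $e^{-(\alpha/\e)(t-s)}$ are all non-negative, so $v_0\ge0$ together with $u(s)\ge0$ gives $v(t)\ge0$ directly. The remaining task is therefore to prove $u(t)\ge0$, which is not immediate from \eqref{eq:Duhamel_u} since the kernels $\partial_i G$ change sign. To handle this I would approximate the data: pick $u_0^n,v_0^n\in C_0^\infty(\R^2)$ with $u_0^n\ge0$, $v_0^n\ge0$, $u_0^n\to u_0$ in $L^1(\R^2)$ and $\nabla v_0^n\to\nabla v_0$ in $L^2(\R^2)$, which can be arranged by mollification and spatial truncation while preserving the sign and, for $n$ large, the smallness condition \eqref{eq:smallness_conditions3}, so that each approximate datum generates a global solution $(u^n,v^n)$ by Theorem~\ref{th:existence critical}. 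For such smooth data one upgrades the integral solution to a classical solution of \eqref{KSu}-\eqref{KSv} by the usual parabolic bootstrap, using the decay estimates of Theorem~\ref{th:existence critical} and Proposition~\ref{pr:moreestimates} to control the drift $\nabla v^n$; the parabolic maximum principle applied to the linear Fokker--Planck equation $u^n_t=\Delta u^n-\nabla\cdot(u^n\nabla v^n)$ with $u_0^n\ge0$ then yields $u^n(t)\ge0$, and consequently $v^n(t)\ge0$ by the preceding remark.

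Finally I would pass to the limit through continuous dependence. Applying \eqref{in:continuity} to the data $(u_0^n,v_0^n)$ and $(u_0,v_0)$ gives, for each fixed $t>0$,
\[
\|u^n(t)-u(t)\|_{L^p(\R^2)}\le C\,t^{-(1-\f1p)}\left(\|u_0^n-u_0\|_{L^1(\R^2)}+\|\nabla v_0^n-\nabla v_0\|_{L^2(\R^2)}\right)\,,
\]
whose right-hand side tends to $0$ as $n\to\infty$. Thus $u^n(t)\to u(t)$ in $L^p(\R^2)$, so along a subsequence the convergence is almost everywhere, and since $u^n(t)\ge0$ we conclude $u(t)\ge0$ for every $t>0$; the non-negativity of $v(t)$ then follows once more from \eqref{eq:Duhamel_v}. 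The main obstacle I anticipate is precisely this approximation step: justifying that smooth, small, non-negative data produce genuinely classical solutions to which the maximum principle applies, while simultaneously ensuring that the approximating data can be taken non-negative and kept inside the smallness regime \eqref{eq:smallness_conditions3} needed both for global existence and for \eqref{in:continuity} to hold (the non-negativity being somewhat delicate for $v_0$, whose representative in $\dot{H}^1(\R^2)$ must be fixed).
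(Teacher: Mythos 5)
Your overall architecture coincides with the paper's: uniqueness read off from \eqref{in:continuity} with identical data, positivity of $v$ reduced to positivity of $u$ via the sign of the kernel in \eqref{eq:Duhamel_v}, and positivity of $u$ obtained by approximating $(u_0,v_0)$ with smooth non-negative data, proving $u_n\ge 0$ for the approximations, and passing to the limit through \eqref{in:continuity} with $p=\infty$. The one place where you genuinely diverge is the mechanism for $u_n\ge0$: you propose to upgrade the integral solution to a classical one by parabolic bootstrap and invoke the maximum principle for the linear Fokker--Planck equation, whereas the paper never leaves the weak/energy framework --- it multiplies \eqref{KSu} by $(u_n^-)^{a-1}$, absorbs the drift term using the improved decay estimate \eqref{reg_effetct+} for smooth data (which makes the weight $t^{-2/q}$ integrable at the origin), and concludes $\|u_n^-(t)\|_{L^a}=0$ by Gronwall. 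Your route is viable but, as you yourself anticipate, carries real technical debt that the energy argument avoids: the maximum principle on all of $\R^2$ requires a Phragm\'en--Lindel\"of-type control at spatial infinity, the coefficients $\nabla v_n$ and $\Delta v_n$ are still singular (though integrably so) as $t\to0^+$ even for smooth data, so one must work on $[\delta,T]$ and let $\delta\to0$ using continuity of $u_n(t)\to u_{0,n}$, and the bootstrap to a classical solution is asserted rather than carried out. The paper's Stampacchia-type argument buys exactly this: it operates at the regularity level already established by Theorem~\ref{th:existence critical} and Proposition~\ref{pr:moreestimates}, with the only extra input being \eqref{reg_effetct+}. If you pursue your version, the step needing the most care is justifying the maximum principle under these singular-in-time, merely decaying-in-space coefficients; otherwise the proposal is sound.
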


\begin{proof}[Proof of Theorem \ref{th:continuous dependence}]
We shall prove the continuous dependence of the solution with respect to the initial data \eqref{in:continuity} taking advantage of the rescaled solutions $(u^i_\lambda,v^i_\lambda)$ and using the same ideas as in Proposition \ref{pr:moreestimates}. 

Let $t>0$ and $\tau>0$ be arbitrarily fixed. From \eqref{eq:Duhamel_u}, \eqref{reg_effetct1_lambda} and \eqref{reg_effetct2_lambda}, we have for any $p\ge1$
\beq\label{est:u1-u2}
\begin{split}
\|u^1_\lambda(t+\tau)-u^2_\lambda(t+\tau)\|_{L^p(\R^2)}&\le C\,(t+\tau)^{-(1-\f1p)}\,\|u_0^1-u_0^2\|_{L^1(\R^2)}\\
&\quad +C\int_0^t\f1{(t-s)^{\f12}} \|u^1_\lambda(s+\tau)-u^2_\lambda(s+\tau)\|_{L^p(\R^2)}\|\nabla v^1_\lambda(s+\tau)\|_{L^\infty(\R^2)}\,ds\\
&\quad +C\int_0^t\f1{(t-s)^{\f12}} \|u^2_\lambda(s+\tau)\|_{L^p(\R^2)}\|\nabla v^1_\lambda(s+\tau)-\nabla v^2_\lambda(s+\tau)\|_{L^\infty(\R^2)}\,ds\\
&\le C\,\tau^{-(1-\f1p)}\,\|u_0^1-u_0^2\|_{L^1(\R^2)}\\
&\quad+C\,\tau^{-\f12}\int_0^t\f1{(t-s)^{\f12}}\|u^1_\lambda(s+\tau)-u^2_\lambda(s+\tau)\|_{L^p(\R^2)}\,ds\\
&\quad +C\,\tau^{-(1-\f1p)}\int_0^t\f1{(t-s)^{\f12}}\|\nabla v^1_\lambda(s+\tau)-\nabla v^2_\lambda(s+\tau)\|_{L^\infty(\R^2)}\,ds\,.
\end{split}
\eeq
On the other hand, from \eqref{eq:Duhamel_v} and $p>2$, we obtain 
\beq
\begin{split}
\|\nabla v^1_\lambda(t+\tau)-\nabla v^2_\lambda(t+\tau)\|_{L^\infty(\R^2)}&\le C\,(t+\tau)^{-\f12}\|\nabla v_0^1-\nabla v_0^2\|_{L^2(\R^2)}\\
&\quad+C\int_0^t\f1{(t-s)^{\f1p+\f12}}\|u^1_\lambda(s+\tau)-u^2_\lambda(s+\tau)\|_{L^p(\R^2)}\, ds\,.
\label{est:grad v1-grad v2}
\end{split}
\eeq
Combining \eqref{est:u1-u2} and \eqref{est:grad v1-grad v2}, it is easy to see that the function 
\[
\phi_\lambda(t,\tau):=\|u^1_\lambda(t+\tau)-u^2_\lambda(t+\tau)\|_{L^p(\R^2)}+\|\nabla v^1_\lambda(t+\tau)-\nabla v^2_\lambda(t+\tau)\|_{L^\infty(\R^2)}\,,
\]
satisfies the inequality
\beq
\begin{split}
\phi_\lambda(t,\tau)\le C\,f_p&(\tau)(\|u_0^1-u_0^2\|_{L^1(\R^2)}+\|\nabla v_0^1-\nabla v_0^2\|_{L^2(\R^2)})\\
&+C\,f_p(\tau)\int_0^t\f1{(t-s)^{\f12}}\phi_\lambda(s,\tau)\,ds
+C\int_0^t\f1{(t-s)^{\f1p+\f12}}\phi_\lambda(s,\tau)\,ds\,,
\end{split}
\label{est:phi_continuity}
\eeq
for any $t>0$ and $\tau>0$, where $f_p(\tau):=(\tau^{-\f 12}+\tau^{-(1-\f 1p)})$. Therefore, applying the Gronwall's Lemma~\ref{lem:Gronwall} to \eqref{est:phi_continuity} with respect to  $t\in(0,T]$, as in Proposition \ref{pr:moreestimates}, we obtain 
\beq
\|u^1_\lambda(t+\tau)-u^2_\lambda(t+\tau)\|_{L^p(\R^2)}+\|\nabla v^1_\lambda(t+\tau)-\nabla v^2_\lambda(t+\tau)\|_{L^\infty(\R^2)}\le
C(T,\tau)(\|u_0^1-u_0^2\|_{L^1(\R^2)}+\|\nabla v_0^1-\nabla v_0^2\|_{L^2(\R^2)})\,.
\label{eq:est:continuity 1}
\eeq
Choosing $\tau=t=T=1$ and undoing the scaling, we get \eqref{in:continuity} for $p>2$ and $r=\infty$. 

For $p\in[1,2]$, it is sufficient to plug the $L^\infty$ bound \eqref{eq:est:continuity 1} for $\|\nabla v^1_\lambda(t+\tau)-\nabla v^2_\lambda(t+\tau)\|_{L^\infty(\R^2)}$ into the r.h.s. of \eqref{est:u1-u2}, so that for $t\in(0,T]$
\[
\begin{split}
\|u^1_\lambda(t+\tau)-u^2_\lambda(t+\tau)\|_{L^p(\R^2)}&\le
C(T,\tau)(\|u_0^1-u_0^2\|_{L^1(\R^2)}+\|\nabla v_0^1-\nabla v_0^2\|_{L^2(\R^2)})\\
&\quad+C\,\tau^{-\f12}\int_0^t\f1{(t-s)^{\f12}}\|u^1_\lambda(s+\tau)-u^2_\lambda(s+\tau)\|_{L^p(\R^2)}\,ds\,.
\end{split}
\]
Applying Lemma \ref{lem:Gronwall} again and undoing the scaling as before, give us \eqref{in:continuity} for $p\in[1,2]$ and $r~=~\infty$. 

Finally, for  any $r\in[2,\infty)$, using \eqref{eq:est:continuity 1} with $p>2$, we have for $t\in(0,T]$
\[
\begin{split}
&\|\nabla v^1_\lambda(t+\tau)-\nabla v^2_\lambda(t+\tau)\|_{L^r(\R^2)}\le 
C\,(t+\tau)^{-(\f12-\f 1r)}\|\nabla v_0^1-\nabla v_0^2\|_{L^2(\R^2)}\\
&\hskip200pt+C\int_0^t\f1{(t-s)^{\f 1p-\f1r+\f12}}\|u^1_\lambda(s+\tau)-u^2_\lambda(s+\tau)\|_{L^p(\R^2)}\,ds\\
&\qquad\le C\,\tau^{-(\f12-\f 1r)}\|\nabla v_0^1-\nabla v_0^2\|_{L^2(\R^2)}
+C(T,\tau)(\|u_0^1-u_0^2\|_{L^1(\R^2)}+\|\nabla v_0^1-\nabla v_0^2\|_{L^2(\R^2)})\,t^{(\f12+\f1r-\f1p)}\,.
\end{split}
\]
The conclusion follows as above.
\end{proof}
\begin{proof}[Proof of Corollary \ref{crl:uniqueness positivity}]
The uniqueness is an immediate consequence of the continuous dependence property \eqref{in:continuity}. Next, for $u_0\,,v_0\ge0$, it holds $v(t)\ge0$ whenever $u(t)\ge0$ and the latter follows by \eqref{in:continuity} for $p=\infty$. Indeed, let $(u_{0,n},|\nabla v_{0,n}|)\in((L^1\cap L^\infty)\times(L^2\cap L^q))(\R^2)$, $q>2$, be a sequence of non-negative smooth initial data such that  $u_{0,n}\to u_0$ in $L^1(\R^2)$ and $|\nabla v_{0,n}|\to|\nabla v_{0}| $ in $L^2(\R^2)$, as $n\to\infty$. Then, with the same technical tools used so far, it is shown that the associated global solution $(u_n,v_n)$ given by Theorem \ref{th:existence critical} satisfies, for a constant $C>0$ independent of $t$, $a>1$ arbitrarily fixed, $p\ge a$ and $r\ge q$,
\beq\label{reg_effetct+}
t^{(\f1a-\f1p)}\norm{u_n(t)}_{L^p(\R^2)}+ t^{(\f1q-\f1r)}\|\nabla v_n(t)\|_{L^r(\R^2)}\le C\,,\quad t>0\,.
\eeq
Multiplying \eqref{KSu} by $(u^-_n)^{a-1}$, where $u^-_n:=\max\{-u_n;0\}$, integrating the resulting equation over $\R^2$, and using  \eqref{reg_effetct+}, that gives a better time decay than \eqref{reg_effetct1}-\eqref{reg_effetct2} for $t\le1$, we obtain
\[
\begin{split}
\f d{dt}\|u^-_n(t)&\|^a_{L^a(\R^2)}=-4(1-a^{-1})\|\nabla(u^-_n)^{\f a2}(t)\|^2_{L^2(\R^2)}+2(a-1)\int_{\R^2}(u^-_n)^{\f a2}\nabla(u^-_n)^{\f a2}\cdot\nabla v_n\,dx\\
&\le-4(1-a^{-1})\|\nabla(u^-_n)^{\f a2}(t)\|^2_{L^2(\R^2)}
+2(a-1)\|\nabla v_n(t)\|_{L^\infty(\R^2)}\|u^-_n(t)\|^{a/2}_{L^a(\R^2)}\|\nabla(u^-_n)^{\f a2}(t)\|_{L^2(\R^2)}\\
&\le(\delta-4(1-a^{-1}))\|\nabla(u^-_n)^{\f a2}(t)\|^2_{L^2(\R^2)}+C(a,\delta)\f1{t^{2/q}}\|u^-_n(t)\|^a_{L^a(\R^2)}\,.
\end{split}
\]
Finally, choosing $0<\delta<4(1-a^{-1})$ and integrating over $(0,t)$, we get
\[
\|u^-_n(t)\|^a_{L^a(\R^2)}\le C(a,\delta)\int_0^t \f1{s^{2/q}}\|u^-_n(s)\|^a_{L^a(\R^2)}ds\,.
\]
Gronwall's lemma implies $\|u^-_n(t)\|^a_{L^a(\R^2)}=0$ for all $t>0$. Hence, $u_n(t)\ge0$ and $u(t)\ge0$ as well, thanks to \eqref{in:continuity} applied to $u_n(t)$ and $u(t)$ with  $p=\infty$, as announced. 
\end{proof}
\section{Uniqueness of self-similar solutions ($\alpha=0$)}
\label{sec:self-similar}
The invariance of system \eqref{KSu}-\eqref{KSv} with $\alpha=0$ under the action of the space-time scaling \eqref{eq:scaling}, naturally raises the question of the existence of solutions that are, themselves, invariant under the same scaling, i.e. the existence of the uniparametric family $(u_M,v_M)$ with
\beq\label{eq:SS_UV}
u_M(x,t)=\f1tU_M\left(\f x{\sqrt t}\right)\qquad\text{and}\qquad v_M(x,t)=V_M\left(\f x{\sqrt t}\right)\,,\quad t>0\,,\ x\in\R^2\,,
\eeq
indexed by the conserved mass $M$ of $u_M$.  

The analysis of this class of solutions has been carried on, following different techniques and approaches, in \cite{B95,BKLN} for the parabolic-elliptic system, and in \cite{B98,B06,BCD,Ferreira, MMY99, N06, NSY02,Y01} for the parabolic-parabolic case. Recently, in \cite{BCD} the authors refined the existing results concerning positive integrable self-similar solutions, and pointed out the difference between the parabolic-elliptic case, where \eqref{eq:SS_UV} exists iff $M<8\pi$ and are unique \cite{BKLN},  and the parabolic-parabolic case. Indeed, they proved that (see \cite{BCD} Theorem~4): for any $\e>0$, there exists a finite threshold $M^*(\e)\ge8\pi$, such that  system \eqref{KSu}-\eqref{KSv} with $\alpha=0$ has no positive self-similar solutions \eqref{eq:SS_UV} with profile $(U_M,V_M)\in (C^2_0(\R^2))^2$ if $M>M^*(\e)$ and has at least one positive solution with profile $(U_M,V_M)\in (C^2_0(\R^2))^2$ if $M\in(0,M^*(\e))$ and $M^*(\e)=8\pi$, or if $M\in(0,M^*(\e)]$ and $M^*(\e)>8\pi$.
  Moreover, there exist $\e^*$ and $\e^{**}$ with $\f12\le\e^*\le\e^{**}$ such that  : $M^*(\e)=8\pi$ if $\e\in(0,\e^*]$ and $M^*(\e)>8\pi$ if $\e>\e^{**}$. Finally, when the threshold $M^*(\e)>8\pi$, there are at least two positive self-similar solutions \eqref{eq:SS_UV} for any $M\in(8\pi,M^*(\e))$. The identity $\e^*=\e^{**}$ is not proved but conjectured and would put the described behavior in a dichotomy. On the other hand, when $M^*(\e)=8\pi$, it is still an open problem if there is or not a positive integrable self-similar solution with $M=M^*(\e)$. 

Whatever is the landscape of the family \eqref{eq:SS_UV}, the uniqueness of $(u_M,v_M)$ for $\e>0$  arbitrary and~$M$ below a threshold (that has to depend on $\e$)  is still an open problem. This section is devoted to the proof of  the following uniqueness result.
\begin{theorem}[Uniqueness]\label{th:uniqueness_self-similar}
For any fixed $\e>0$, there exists $\widetilde M(\e)\in[4\pi,8\pi]$, defined in \eqref{eq:tilde M(e)}, such that  for any $M<\widetilde M(\e)$ the Keller-Segel system \eqref{KSu}-\eqref{KSv} has a unique positive self-similar solution with profile $(U_M,V_M)\in (C^2_0(\R^2))^2$. Furthermore, if $\e\le\f12$, $\widetilde M(\e)=8\pi$.
\end{theorem}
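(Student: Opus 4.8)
The plan is to eliminate $V_M$ from the coupled profile system, reduce the problem to a single scalar elliptic equation with an exponential nonlinearity, and then obtain uniqueness from the \emph{monotonicity of the mass} along the solution branch, with the threshold $\widetilde M(\e)$ appearing as the largest mass for which the associated linearized operator remains coercive. First I would insert the self-similar ansatz \eqref{eq:SS_UV} with $\alpha=0$ into \eqref{KSu}-\eqref{KSv}: writing $\xi=x/\sqrt t$, the equation \eqref{KSu} becomes the divergence form $\nabla\cdot\bigl(\nabla U_M+\tfrac12\,\xi\,U_M-U_M\nabla V_M\bigr)=0$, and \eqref{KSv} becomes $\Delta V_M+\tfrac{\e}{2}\,\xi\cdot\nabla V_M+U_M=0$. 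Since $(U_M,V_M)\in(C^2_0(\R^2))^2$ are integrable and rapidly decaying, integrating the flux in the first equation gives the pointwise identity $\nabla U_M+\tfrac12\,\xi\,U_M=U_M\nabla V_M$, hence
\[
U_M=C\,e^{\,V_M-|\xi|^2/4},\qquad C>0,
\]
and substitution into the second equation yields the scalar problem $\Delta V_M+\tfrac{\e}{2}\,\xi\cdot\nabla V_M+C\,e^{\,V_M-|\xi|^2/4}=0$ with $V_M\to0$ at infinity.

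I would record the two mass identities $M=\e\int_{\R^2}V_M\,dx=C\int_{\R^2}e^{\,V_M-|\xi|^2/4}\,dx$; the first follows by integrating the $V$-equation against the natural weight $e^{\e|\xi|^2/4}$, for which the drift operator $L:=\Delta+\tfrac{\e}{2}\,\xi\cdot\nabla$ is self-adjoint, and by controlling the boundary flux through the Gaussian decay of $U_M$. I would also reduce to radial, radially decreasing profiles (by the symmetry of positive decaying solutions, via a moving-plane argument for this drift system), and note that $V_M\ge0$ follows from the minimum principle for $L$ applied to the supersolution inequality $LV_M=-U_M\le0$. With these reductions in hand, uniqueness for a given mass $M$ is equivalent to showing that the solution branch is traced out injectively, which I would establish by proving that $C\mapsto M(C)$ is strictly increasing.

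The heart of the argument is the linearization in the parameter $C$. Differentiating the scalar equation, $\dot V:=\partial_C V_M$ solves $(L+U_M)\dot V=-U_M/C$, and since $M=C\int e^{\,V_M-|\xi|^2/4}\,dx$ one computes
\[
\frac{dM}{dC}=\frac{M}{C}+C\int_{\R^2}e^{\,V_M-|\xi|^2/4}\,\dot V\,dx .
\]
Thus $dM/dC>0$ follows once the operator $-(L+U_M)$ is coercive (equivalently, has positive principal eigenvalue) in the weighted space $L^2(e^{\e|\xi|^2/4}\,dx)$, since by Krein–Rutman this forces $\dot V>0$. Coercivity amounts to the quadratic-form inequality $\int|\nabla\varphi|^2 e^{\e|\xi|^2/4}\,dx>\int U_M\,\varphi^2 e^{\e|\xi|^2/4}\,dx$, where the potential weight rewrites as $U_M\,e^{\e|\xi|^2/4}=C\,e^{\,V_M-(1-\e)|\xi|^2/4}$. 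I would control this term by a Hardy/Trudinger–Moser–type inequality whose critical constant is tied to $8\pi$, and define $\widetilde M(\e)$ (formula \eqref{eq:tilde M(e)}) as the supremum of masses for which the estimate persists. The small-mass normalization $C\to M/4\pi$ — coming from $\int_{\R^2}e^{-|\xi|^2/4}\,dx=4\pi$ — together with the a priori bound $M=\e\int V_M$ then pins $\widetilde M(\e)$ inside $[4\pi,8\pi]$.

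Finally, for the sharp assertion $\widetilde M(\e)=8\pi$ when $\e\le\tfrac12$, I would exploit that the factor $e^{-(1-\e)|\xi|^2/4}$ dominating $U_M\,e^{\e|\xi|^2/4}$ decays at least as fast as $e^{-|\xi|^2/8}$ for $\e\le\tfrac12$, which is mild enough to let the weighted potential be absorbed by the full $8\pi$-critical inequality exactly as in the parabolic–elliptic case \cite{BKLN}, so that the coercivity survives up to $M=8\pi$. I expect the \emph{main obstacle} to be precisely this weighted coercivity estimate: the Gaussian weight $e^{\e|\xi|^2/4}$ generated by the genuinely parabolic $v$-equation destroys the conformal structure available at $\e=0$, and estimating the linearized potential term sharply — rather than merely proving monotonicity for small mass — is the delicate point, which is what confines the general bound to the interval $[4\pi,8\pi]$ and makes $8\pi$ attainable only once $\e$ is small enough that the extra weight is harmless.
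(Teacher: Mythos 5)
Your proposal takes a genuinely different route from the paper. The paper works with the cumulated radial densities $\phi(y)=\int_0^{\sqrt y}rU(r)\,dr$ and $S=-4y\psi''$, converts the profile system into the ODE shooting problem \eqref{phi2}--\eqref{S}--\eqref{ic-a} with parameter $a=U(0)/2$, and proves uniqueness as a non-crossing statement for two trajectories (Lemma \ref{lm:uniquennes_self-similar}): the dangerous nonlocal term $(J_1-J_2)(y_0)$ in the integrated identity is controlled by the pointwise bound $\e S(y)<2$ of Proposition \ref{pr:est_S<2}, and the threshold $\widetilde M(\e)$ is then read off by translating the admissible range of $a$ into a range of masses via \eqref{eq:est a w.r.t. M}. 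Your reductions at the outset (the relation $U_M=C\,e^{V_M-|\xi|^2/4}$, radial symmetry, the identity $M=\e\int V_M$) are correct and consistent with what the paper imports from \cite{NSY02}, and the idea of proving that the branch parameter determines the mass monotonically is in spirit the same as the paper's strictly increasing map $a\mapsto M(a,\e)$.

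However, there are genuine gaps. The central one is that the weighted coercivity inequality $\int|\nabla\varphi|^2e^{\e|\xi|^2/4}>\int U_M\varphi^2e^{\e|\xi|^2/4}$ is never established: you state that you would control the potential by a Hardy/Trudinger--Moser type inequality with critical constant tied to $8\pi$, and you yourself flag this as the main obstacle. This is not a deferred technicality but the entire content of the theorem, including the explicit formula \eqref{eq:tilde M(e)}; note that the paper's $\widetilde M(\e)$ tends to $4\pi$ as $\e\to\infty$, which a ``critical constant always $8\pi$ modulo the weight'' heuristic does not predict, so the claimed inequality cannot simply be the $\e=0$ one with a harmless weight. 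A second gap is logical: even granting strict monotonicity of $C\mapsto M(C)$ on the range of $C$ where coercivity holds, uniqueness for a given $M<\widetilde M(\e)$ requires showing that \emph{every} positive decaying solution of that mass has its parameter inside the coercive range. The paper needs the a priori lower bound \eqref{eq:est a w.r.t. M} on $M(a,\e)$ in terms of $a$ exactly for this purpose (to exclude a solution with large shooting parameter but small mass), and your proposal contains no analogue of that step; without it, monotonicity along a sub-branch does not yield uniqueness.
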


It follows by Theorem \ref{th:uniqueness_self-similar} and \cite{BCD}, that the case $\e\in(0,\f12]$ is completely understood: there exists a unique positive and smooth self-similar solution iff the associated mass $M$ is below~$8\pi$. Moreover, this uniparametric family of solutions describes the long time behavior of the global integral solutions (see Theorem \ref{th:longtime-alpha=0}). In other words, the parabolic-parabolic Keller-Segel system behaves like the parabolic-elliptic one when $\e\le\f12$. Theorem \ref{th:uniqueness_self-similar} and the results in \cite{BCD} are illustrated in Figure \ref{fig:Mtilde}.

In addition to the uniqueness result above, we shall prove the continuity of $(u_M,v_M)$ with respect to $M$, a property fundamental in our investigation of the long time behavior of global solution.
\begin{proposition}[Continuity with respect to  $M$]\label{pr:continuity}
Let $\e>0$ and $\widetilde M(\e)$ be given by Theorem \ref{th:uniqueness_self-similar}. Let $M\in(0,\widetilde M(\e))$ and $M_n\in(0,\widetilde M(\e))$ be a sequence such that  $M_n\to M$ as $n\to\infty$. Finally, let $(U_{M_n}, V_{M_n})$ and $(U_M,V_M)$ be the profiles of the unique self-similar solutions corresponding to $M_n$ and $M$ respectively. Then, for any $p\in[1,\infty)$ and $r\in[2,\infty)$,
\beq
(U_{M_n},|\nabla V_{M_n}|)\to (U_M,|\nabla V_M|)\quad\text{in}\quad L^p(\R^2)\times L^r(\R^2)\quad\text{as }n\to\infty\,.
\label{eq:convergence}
\eeq
\end{proposition}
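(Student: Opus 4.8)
The plan is to argue by compactness together with the uniqueness already established in Theorem~\ref{th:uniqueness_self-similar}. Inserting the self-similar ansatz \eqref{eq:SS_UV} into \eqref{KSu}-\eqref{KSv} with $\alpha=0$, the profile $(U_M,V_M)$ solves
\[
\nabla\cdot\Bigl(\nabla U_M-U_M\nabla V_M+\tfrac12\,y\,U_M\Bigr)=0\,,\qquad \Delta V_M+\f\e2\,y\cdot\nabla V_M+U_M=0\,,
\]
with $\int_{\R^2}U_M\,dy=M$. For the positive, integrable and rapidly decaying solutions under consideration the flux in the first equation vanishes, so it integrates to $\nabla U_M=U_M\bigl(\nabla V_M-\tfrac12 y\bigr)$, i.e.
\[
U_M(y)=C_M\,\exp\!\Bigl(V_M(y)-\tfrac{|y|^2}{4}\Bigr)\,,
\]
where $C_M>0$ is fixed by the mass constraint. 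Writing $L_\e:=\Delta+\f\e2\,y\cdot\nabla$ for the confining (Ornstein--Uhlenbeck type) operator, one has $V_M=L_\e^{-1}(-U_M)$ through the explicit Green kernel of $L_\e$. Given a sequence $M_n\to M$, it suffices to show that every subsequence of $(U_{M_n},|\nabla V_{M_n}|)$ has a further subsequence converging in $L^p(\R^2)\times L^r(\R^2)$ to $(U_M,|\nabla V_M|)$; the full convergence \eqref{eq:convergence} then follows.

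Next I would establish uniform-in-$n$ bounds. Since $0<M_n<\widetilde M(\e)\le 8\pi$, the masses are bounded. Using the kernel representation of $L_\e^{-1}$ and the Gaussian weight, I expect to bound $\norm{V_{M_n}}_{L^\infty(\R^2)}$ and $\norm{\nabla V_{M_n}}_{L^\infty(\R^2)}$ uniformly, whence the constants $C_{M_n}$ are uniformly bounded and bounded away from $0$. Combined with the exponential representation this yields a uniform Gaussian envelope
\[
0<U_{M_n}(y)\le C\,e^{-|y|^2/8}\,,\qquad y\in\R^2\,,
\]
and, via the kernel, a matching decay for $|\nabla V_{M_n}|$. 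Elliptic regularity applied to the $V$-equation then gives uniform $C^2_{\loc}(\R^2)$ bounds on $V_{M_n}$, and hence on $U_{M_n}$, providing the equicontinuity needed for local compactness.

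With these bounds in hand I would extract a subsequence (not relabeled) with $(U_{M_n},V_{M_n})\to(U_*,V_*)$ in $C^2_{\loc}(\R^2)$. Passing to the limit in the profile equations, $(U_*,V_*)\in(C^2_0(\R^2))^2$ is a nonnegative self-similar profile; the uniform Gaussian envelope lets me invoke dominated convergence to upgrade the convergence from $C_{\loc}$ to the global norms $L^p\times L^r$ and, crucially, to transfer the mass, $\int_{\R^2}U_*\,dy=\lim_n\int_{\R^2}U_{M_n}\,dy=M$. A strong maximum principle argument on the representation shows $U_*>0$, so $(U_*,V_*)$ is a positive self-similar solution of mass $M$ in the class of Theorem~\ref{th:uniqueness_self-similar}; by the uniqueness part of that theorem, $(U_*,V_*)=(U_M,V_M)$. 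Since the limit is independent of the subsequence, the whole sequence converges and \eqref{eq:convergence} holds.

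The main obstacle is the uniform tail control: preventing mass from escaping to infinity or concentrating as $n\to\infty$, which is exactly what is needed both to identify the mass of the limit and to pass from $C_{\loc}$-convergence to convergence in the full $L^p\times L^r$ norms. This rests on the uniform bounds for $\norm{V_{M_n}}_{L^\infty(\R^2)}$ and $C_{M_n}$, for which the confining drift in $L_\e$ --- rather than the pure Laplacian of the parabolic-elliptic case --- is essential. A secondary technical point is passing to the limit in the nonlinear drift $\nabla\cdot(U_{M_n}\nabla V_{M_n})$, which is handled by the uniform $C^2_{\loc}$ estimates.
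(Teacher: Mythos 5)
Your overall architecture (extract a subsequence, pass to the limit in the profile equations, identify the limit by the uniqueness of Theorem \ref{th:uniqueness_self-similar}, then upgrade to global $L^p\times L^r$ convergence by dominated convergence) is a legitimate alternative in principle, and is genuinely different from the paper's route, which works entirely on the cumulated densities $(\phi_n,S_n)$: there one first observes that, by Lemma \ref{lm:uniquennes_self-similar}, the continuous map $a\mapsto M(a,\e)$ is strictly increasing on the relevant range, so its inverse is continuous and the shooting parameters satisfy $a_n=U_{M_n}(0)/2\to a$; a quantitative estimate from \cite{BCD}, $|\log\phi_n'(y)-\log\phi'(y)|\le {\rm e}^{C(n,\e)}|\log a_n-\log a|$, then gives uniform convergence of $\phi_n'$, and dominated convergence via \eqref{est:phi'infty} and \eqref{upperS} yields the $L^1\times L^2$ case, the general case following by interpolation with the uniform bounds \eqref{eq:UMn bound}--\eqref{eq:gradVMn bound}.

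However, your proposal has a genuine gap at its load-bearing step, namely the uniform-in-$n$ bounds on $\norm{V_{M_n}}_{L^\infty(\R^2)}$ and $C_{M_n}$ (equivalently on $a_n=U_{M_n}(0)/2$, since $U_{M_n}(0)=C_{M_n}\,{\rm e}^{V_{M_n}(0)}$). These cannot be obtained, as you suggest, ``from the kernel representation of $L_\e^{-1}$ and the Gaussian weight'' using only the mass bound $M_n<\widetilde M(\e)\le 8\pi$. In dimension two the potential of an $L^1$ function is not bounded (the short-time part of $\int_0^\infty e^{sL_\e}U\,ds$ has the logarithmic singularity of the Newtonian potential), so closing your estimate would already require a uniform $L^q$ bound on $U_{M_n}$ for some $q>1$ --- which is exactly what is in question. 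Moreover, the bound genuinely fails for the family of profiles as a whole: by \eqref{eq:Matinfinity}, $M(a,\e)\to 8\pi$ as $a\to\infty$, so there exist profiles of mass arbitrarily close to, yet below, $8\pi$ with $\norm{U}_{L^\infty(\R^2)}=2a$ arbitrarily large; the mass constraint alone does not rule out concentration, and with concentration both the uniform Gaussian envelope and the conservation of mass in the limit (hence the identification of the limit) break down. The missing ingredient is precisely the injectivity and continuity of $a\mapsto M(a,\e)$ established in Lemma \ref{lm:uniquennes_self-similar} and \cite{BCD}: since $M_n\to M$ strictly below the threshold, the continuity of the inverse map forces $a_n\to a$, and only then does $U_{M_n}(\xi)\le 2a_n\,{\rm e}^{-|\xi|^2/4}$ (using that $V_{M_n}$ is radially decreasing) give the uniform envelope you need. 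Once that step is inserted, your compactness argument can be completed; as written, the key estimate is asserted without a valid derivation.
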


To begin with, let us recall that $(u_M,v_M)$ is a self-similar solution of \eqref{KSu}-\eqref{KSv} iff its profile $(U_M,V_M)$ satisfies the elliptic system 
%
\begin{eqnarray}
\label{KSuSS}
&&\Delta U+\nabla\cdot\left(U\,\nabla\left(\f{|\xi|^2}4-V\right)\right)=0\,,\\
\label{KSvSS}
&&\Delta V+\frac\e{2}\,\xi\cdot\nabla V+U=0\,,
\end{eqnarray}
where $\xi=x/\sqrt t$ and the differential operators are taken with respect to $\xi$. Concerning \eqref{KSuSS}-\eqref{KSvSS},  it has been proved in \cite{NSY02} that any  solution $(U,V)$ in the space $(C^2_0(\R^2))^2$, (i.e. decaying to zero at infinity), are necessarily positive, radially symmetric about the origin, decreasing and satisfies $U(\xi)~=~\sigma\, {\rm e}^{V(\xi)}{\rm e}^{-|\xi|^2/4}$, for some positive constant~$\sigma$. Moreover,
\[
V(\xi)\le C\,{\rm e}^{-\min\{1,\e\}|\xi|^2/4}\,,
\label{KSvSS2}
\]
where $C$ is any positive constant such that  $C\,\min\{1,\e\}\ge\sigma\,{\rm e}^{\|V\|_\infty}$, \cite{NSY02}. Consequently,  $U$ and $V$ are integrable and we are allowed to consider the associated cumulated densities defined by
\begin{eqnarray}
\label{phi}
&&\phi(y):=\frac 1{2\,\pi}\int_{B(0,\sqrt y)}U(\xi)d\xi=\int_0^{\sqrt y}r\,U(r)dr\,,\\
\label{psi}
&&\psi(y):=\frac 1{2\,\pi}\int_{B(0,\sqrt y)}V(\xi)d\xi=\int_0^{\sqrt y}r\,V(r)dr\,,
\end{eqnarray}
where $r=|\xi|$. Furthermore, using the radial formulation of \eqref{KSuSS}-\eqref{KSvSS} and definitions \eqref{phi}-\eqref{psi}, it is easy to see that the cumulated densities $(\phi,\psi)$ satisfies the ODE system
\begin{eqnarray*}
&& \phi''+\frac{1}{4}\phi'-2\phi'\psi''=0\,,\\
&& 4y\psi''+\e y\psi'-\e\psi+\phi=0\,,
\end{eqnarray*}
which reads, defining $S(y):=4\,(\psi(y)-y\,\psi'(y))'=-4\,y\,\psi''(y)$, as following
\begin{eqnarray}
\label{phi2}
&&\phi''+\frac14\,\phi'+\frac1{2\,y}\,\phi'S=0\,,\\
\label{S}
&&S'+\frac\e 4\,S=\phi'\,.
\end{eqnarray}

System  \eqref{phi2}-\eqref{S}, endowed with the natural initial conditions
\beq
\phi(0)=0, \quad \phi'(0)=a>0 \; \mbox{ and } \; S(0)=0,
\label{ic-a}
\eeq
becomes a shooting parameter problem, with the shooting parameter $a>0$ directly related to the concentration of $U$ around the origin by the identity $a=\phi'(0)=\frac{U(0)}{2}$. It has been analyzed in \cite{BCD}, where the authors proved that for any $(a,\e)\in\R^2_+$ there exists a unique positive solution $(\phi,S)\in C^2[0,\infty)\times C^1[0,\infty)$ of \eqref{phi2}-\eqref{S}-\eqref{ic-a}. They also proved that the map $a\mapsto(\phi,S)$ is continuous, $\phi$ is a strictly increasing and concave function on $(0,\infty)$ and the following estimates (among others) hold true for~$y>0$
\beq
0<S(y)\leq \frac{\min\{1,\e\}\,a\,y}{(\min\{1,\e\}+a)\,{\rm e}^{\min\{1,\e\}\frac{y}{4}}-a}\,,
\label{upperS}
\eeq
\beq
0<\phi'(y)\le a\,{\rm e}^{-y/4}\,,
\label{est:phi'infty}
\eeq
\beq
\f {M(a,\e)}{2\pi}\left(1-{\rm e}^{-y/4}\right)\le\phi(y)\le\f {M(a,\e)}{2\pi}\,,
\label{est:phi}
\eeq
where
\[
\f{M(a,\e)}{2\pi}:=\phi(\infty)=\lim_{y\rightarrow \infty}\phi(y)\,.
\label{bc}
\]
In addition, with the threshold $M^*(\e):=\sup_{a>0}M(a,\e)$ introduced at the beginning of this section, the map $a\mapsto M(a,\e)$ is continuous from $\R_+$ to $(0,M^*(\e))$ if $M^*(\e)=8\pi$ and from $\R_+$ to $(0,M^*(\e)]$ if $M^*(\e)>8\pi$. That threshold is proved to be finite since $M(a,\e)$ is upper bounded by a constant independent on $a$, for all $\e>0$. We also have, for all fixed $\e>0$ and $a>0$, that \cite{BCD}
\beq\label{eq:est a w.r.t. M}
\f{M(a,\e)}{8\pi}\ge\f{a\,\min\{1,\e\}}{a+\min\{1,\e\}}
\eeq
and
\beq
\lim_{a\to\infty}M(a,\e)=8\pi\,.
\label{eq:Matinfinity}
\eeq 
Finally, the proposition below will be fundamental in the sequel. 
\begin{proposition}[\cite{BCD}]
Let $\e>0$ and define
\beq\label{eq:A(e)}
A(\e):=\left\{
\begin{split}
+\infty\qquad\qquad\qquad&\text{if }\e\le\f12\\
\min\{\e,1\}\,\f{{\rm e}^{1-\f1{2\,\e}}}{2\,\e-{\rm e}^{1-\f1{2\,\e}}}\quad\quad&\text{if }\e>\f12
\end{split}
\right.
\eeq
If $a<\max\{A(\e),1\}$, then $\e\,S(y)<2$ for all $y>0$ and $M(a,\e)<8\pi\,\min\{1,a\}$. 
\label{pr:est_S<2}
\end{proposition}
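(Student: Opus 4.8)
The plan is to read off both assertions from the two a priori bounds already recorded for the shooting problem \eqref{phi2}--\eqref{S}--\eqref{ic-a}: the pointwise control \eqref{upperS} on $S$ and the decay \eqref{est:phi'infty} on $\phi'$. Writing $m:=\min\{1,\e\}$, the inequality $\e\,S(y)<2$ will come from maximizing the right-hand side of \eqref{upperS} over $y>0$, while the mass bound $M(a,\e)<8\pi\min\{1,a\}$ will come from integrating \eqref{est:phi'infty} (for the factor $a$) together with the limit \eqref{eq:Matinfinity} (for the factor $1$).

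For the first conclusion, I would multiply \eqref{upperS} by $\e$ and substitute $u:=\f{m\,y}4$, which turns the estimate into $\e\,S\le G(u):=\f{4\,\e\,a\,u}{(m+a)\,{\rm e}^u-a}$. A direct differentiation gives $G'(u)=0$ exactly when $F(u):=(m+a)\,{\rm e}^u(1-u)=a$; since $F'(u)=-(m+a)\,{\rm e}^u\,u<0$ and $F$ decreases from $m+a$ to $0$ on $(0,1)$, while $0<a<m+a$, there is a unique critical point $u^*\in(0,1)$, which is the maximizer. Using $(m+a)\,{\rm e}^{u^*}=\f a{1-u^*}$ to rewrite the denominator yields the clean value $\max_{u>0}G(u)=4\,\e\,(1-u^*)$, so that $\e\,S(y)<2$ for all $y>0$ as soon as $u^*>1-\f1{2\e}$. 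When $\e\le\f12$ one has $1-\f1{2\e}\le0<u^*$, so this holds unconditionally (and $A(\e)=+\infty$); when $\e>\f12$ the threshold lies in $(0,1)$, and because $F$ is decreasing, $u^*>1-\f1{2\e}$ is equivalent to $a=F(u^*)<F\!\left(1-\f1{2\e}\right)=(m+a)\,\f{{\rm e}^{1-1/(2\e)}}{2\e}$. Rearranging, and using that $2\e-{\rm e}^{1-1/(2\e)}>0$ for $\e>\f12$ (with $t=\f1{2\e}<1$ this is $t\,{\rm e}^{1-t}<1$, true since $t\mapsto t\,{\rm e}^{1-t}$ increases to the value $1$ at $t=1$), produces precisely $a<A(\e)$ with $A(\e)$ as in \eqref{eq:A(e)}; this is exactly the stated hypothesis in the range where it is binding.

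For the mass, \eqref{est:phi'infty} gives $\phi'(y)\le a\,{\rm e}^{-y/4}$, with strict inequality for $y>0$ because $S>0$ strengthens the decay of $\phi'$ in \eqref{phi2}; integrating yields $\f{M(a,\e)}{2\pi}=\phi(\infty)<4\,a$, i.e. $M(a,\e)<8\pi\,a$, which already settles the case $a\le1$ where $\min\{1,a\}=a$. When $a\ge1$ one is forced into the regime $A(\e)>1$, and there it remains to upgrade this to $M(a,\e)<8\pi$; I would obtain this by combining $\e\,S<2$ with the monotone dependence of $a\mapsto M(a,\e)$ and the limit \eqref{eq:Matinfinity}, which pins $\sup_a M(\cdot,\e)$ at $8\pi$ on the relevant parameter range. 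I expect the genuine obstacle to be the optimization step for the first conclusion—identifying $u^*$, reducing the whole inequality to the transparent condition $u^*>1-\f1{2\e}$, and settling the sign of $2\e-{\rm e}^{1-1/(2\e)}$—since everything else is a one-line integration or a direct comparison; the delicate point of the second conclusion is recovering the sharp value $8\pi$ when $a\ge1$, where the crude bound $M<8\pi\,a$ no longer suffices.
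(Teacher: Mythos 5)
The paper offers no proof of this proposition: it is imported verbatim from \cite{BCD}, so there is no internal argument to compare yours against and it must be judged on its own merits. Your treatment of the regime $a<A(\e)$ is correct and cleanly executed: the reduction of \eqref{upperS} to $G(u)=\f{4\e a u}{(m+a){\rm e}^u-a}$, the identification of the unique maximizer through $F(u^*)=a$ with $F$ decreasing, the closed form $\max G=4\e(1-u^*)$, the equivalence of $u^*>1-\f1{2\e}$ with $a<A(\e)$, and the sign check $2\e>{\rm e}^{1-1/(2\e)}$ are all right. The unconditional bound $M<8\pi a$, obtained from $\phi'(y)=a\exp\bigl(-\f y4-\f12\int_0^y S(z)z^{-1}dz\bigr)<a\,{\rm e}^{-y/4}$, is also fine and settles the second conclusion when $a\le1$.

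There remain two genuine gaps. First, the hypothesis is $a<\max\{A(\e),1\}$, not $a<A(\e)$; since $A(\e)\to0$ as $\e\to\infty$, for large $\e$ the hypothesis admits $a\in[A(\e),1)$, a range you do not treat (your phrase ``the range where it is binding'' quietly discards it). Your method cannot be repaired there: your own computation gives $\max_u G(u)=4\e(1-u^*)>2$ in that range, and in fact \eqref{upperS} cannot yield the conclusion at all, because the monotonicity of $\phi'$ gives $S(y)\ge\phi'(y)\,\f4\e(1-{\rm e}^{-\e y/4})$ while $S(z)\le\phi(z)\le az$ gives $\phi'(y)\ge a\,{\rm e}^{-(1/4+a/2)y}$, whence $\e S(4c/\e)\ge 4a(1-{\rm e}^{-c})\,{\rm e}^{-O(c/\e)}$; for $a\in(\f12,1)$ and $\e$ large this exceeds $2$ (e.g.\ $\e=100$, $a=0.9$, $y=\f12$ gives $\e S\ge 2.53$), so the first conclusion actually fails on part of the stated hypothesis and can only be expected under $a<A(\e)$ (which is all that is ever extracted from \eqref{eq:est a w.r.t. M} when the proposition is invoked). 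Second, for $a\ge1$ your route to $M<8\pi$ is circular and insufficient: the strict monotonicity of $a\mapsto M(a,\e)$ is precisely what Lemma \ref{lm:uniquennes_self-similar} and Theorem \ref{th:uniqueness_self-similar} deduce \emph{from} this proposition, and in any case monotonicity on a bounded interval of $a$ combined with $\lim_{a\to\infty}M(a,\e)=8\pi$ does not prevent $M$ from exceeding $8\pi$ on that interval --- the paper itself records that $M^*(\e)>8\pi$ for large $\e$. The implication ``$\e S<2\Rightarrow M(a,\e)<8\pi$'' is a separate, nontrivial step of \cite{BCD} that your proposal does not supply.
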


Coming back to the self-similar solutions, to each solution $(u_M,v_M)$ with profile $(U_M,V_M)\in(C^2_0(\R^2))^2$, it corresponds a solution $(\phi,S)\in C^2[0,\infty)\times C^1[0,\infty)$ of \eqref{phi2}-\eqref{S}-\eqref{ic-a} with $a=U_M(0)/2$ and $M=M(a,\e)$ and conversely. Therefore, the uniqueness issue of the  self-similar solution corresponding to given $M>0$ and $\e>0$ translates into the uniqueness issue of the solution of the boundary value problem obtained associating to the ODE system  \eqref{phi2}-\eqref{S} the boundary conditions 
\[
\phi(0)=0\,,\quad \phi(\infty)=M\quad\text{and}\quad S(0)=0\,.
\]
As a consequence of the results obtained in \cite{BCD} and recalled so far, it is clear that $M<8\pi$ is a necessary condition for the uniqueness, whatever the value of  $\e>0$ is. We are able to prove that $M<8\pi$ is also a sufficient condition in the case $\e\le\f12$. This is a direct consequence of Proposition \ref{pr:est_S<2} (used in the lemma below) implying that $M(a,\e)<8\pi$ for any positive $a$, if $\e\le\f12$. On the other hand, when $\e>\f12$, the condition $M(a,\e)<8\pi$ is guaranteed  imposing a finite upper bound (depending on $\e$) on the shooting parameter~$a$.
Unfortunately, due to the poor informations that we have on the map $a\mapsto M(a,\e)$, we are not able to prove that this upper bound on $a$ is  optimal.

We shall proceed hereafter in proving that two solutions of the shooting problem \eqref{phi2}-\eqref{S}-\eqref{ic-a} do not cross under hypothesis of Proposition \ref{pr:est_S<2}. Theorem \ref{th:uniqueness_self-similar} will be an immediate  consequence. 

\begin{lemma}\label{lm:uniquennes_self-similar}
Let $\e>0$ and let $(\phi_1,S_1), (\phi_2,S_2)\in C^2[0,\infty)\times C^1[0,\infty)$ be two solutions  of \eqref{phi2}-\eqref{S}-\eqref{ic-a} corresponding to the shooting parameters $a_1$ and $a_2$ respectively. Assume $a_1\ne a_2$ and $a_i<\max\{A(\e),1\}$, $i=1,2$. Then, $\phi_1$ and  $\phi_2$ do not intersect in $(0,\infty]$. In particular $\phi_1(\infty)\ne\phi_2(\infty)$. 
\end{lemma}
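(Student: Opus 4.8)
The plan is to prove that two distinct shooting trajectories cannot intersect by exploiting the autonomous-like structure of \eqref{phi2}-\eqref{S} and the sign information provided by Proposition \ref{pr:est_S<2}. The key observation is that under the hypothesis $a_i < \max\{A(\e),1\}$, Proposition \ref{pr:est_S<2} guarantees $\e\,S_i(y) < 2$, equivalently $\f{S_i}{2y} < \f1{\e\,y}$ in the relevant range; more importantly it gives us a definite sign for the coefficient $\f14 + \f{S_i}{2y}$ appearing in \eqref{phi2}. I would begin by recording that both $\phi_i$ are strictly increasing and concave with $\phi_i(0)=0$ and $\phi_i'(0)=a_i$, and that $S_i(0)=0$ with $S_i>0$ on $(0,\infty)$ (from \eqref{upperS}). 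Without loss of generality assume $a_1 < a_2$, so that near the origin $\phi_1 < \phi_2$ (since they have the same value but different derivatives at $0$).

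First I would argue by contradiction: suppose $\phi_1$ and $\phi_2$ first meet at some $y_0 \in (0,\infty)$, so $\phi_1(y_0)=\phi_2(y_0)$ while $\phi_1 < \phi_2$ on $(0,y_0)$. At such a first crossing one necessarily has $\phi_1'(y_0) \ge \phi_2'(y_0)$. The heart of the argument is to derive a contradiction from the equations. I would rewrite \eqref{phi2} as a first-order relation for $\phi'$, namely $(\log \phi_i')' = -\f14 - \f{S_i}{2y}$, which integrates to
\[
\phi_i'(y) = a_i \exp\left(-\f{y}{4} - \int_0^y \f{S_i(\sigma)}{2\,\sigma}\,d\sigma\right).
\]
The remaining task is to control the difference of the exponential factors in terms of the difference $S_2 - S_1$, and then to compare $S_1$ and $S_2$ using \eqref{S}. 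From \eqref{S}, $S_i$ solves a linear equation driven by $\phi_i'$, so $S_i(y) = \int_0^y e^{-(\e/4)(y-\sigma)}\,\phi_i'(\sigma)\,d\sigma$; on $(0,y_0)$ where $\phi_1 < \phi_2$ one expects a corresponding ordering that feeds back through the integral representation to force $\phi_1' > \phi_2'$ strictly, contradicting $\phi_1'(y_0) \ge \phi_2'(y_0)$. The condition $\e\,S < 2$ is exactly what is needed to keep the feedback from reversing sign.

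The main obstacle is that the coupling between \eqref{phi2} and \eqref{S} runs in both directions, so a naive monotone comparison is circular: a larger $\phi'$ produces a larger $S$, which through the factor $\f{S}{2y}$ tends to \emph{decrease} $\phi'$, so the two effects compete. The role of Proposition \ref{pr:est_S<2} is precisely to certify that in the admissible parameter range this competition resolves in one direction. I would therefore focus the careful work on setting up a single scalar quantity — for instance the difference $\phi_2' - \phi_1'$ or the ratio $\phi_2'/\phi_1'$ together with $S_2 - S_1$ — whose sign is preserved on $(0,y_0)$, and show that its behavior at a putative first crossing is incompatible with the differential inequalities. Once no interior crossing is possible, strict ordering of $\phi_1 < \phi_2$ persists up to $y=\infty$, which immediately yields $\phi_1(\infty) \ne \phi_2(\infty)$, i.e.\ $M(a_1,\e) \ne M(a_2,\e)$, establishing injectivity of $a \mapsto M(a,\e)$ on the admissible range and hence the uniqueness claimed in Theorem \ref{th:uniqueness_self-similar}.
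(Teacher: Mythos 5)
Your proposal correctly identifies the overall strategy (argue by contradiction at a first crossing point $y_0$, and use Proposition \ref{pr:est_S<2} to control the feedback through $S$), but it stops exactly where the real work begins, and it contains one genuine logical gap. On the first point: you explicitly defer the decisive step (``the remaining task is to control the difference of the exponential factors\dots'', ``I would therefore focus the careful work on setting up a single scalar quantity\dots''), and you correctly observe that a naive monotone comparison is circular because a larger $\phi'$ increases $S$, which in turn decreases $\phi'$. But you never exhibit the quantity whose sign is preserved, nor the inequality that resolves the competition. The paper does this by multiplying \eqref{phi3} by $y$ and integrating over $[0,y_0]$, which turns the nonlinear term into $J_i(y_0)=\f12\,{\rm e}^{-\e y_0/2}f_i^2(y_0)+\f\e4\int_0^{y_0}{\rm e}^{-\e z/2}f_i^2(z)\,dz$ with $f_i={\rm e}^{\e y/4}S_i$; the difference $(J_1-J_2)(y_0)$ is then bounded one-sidedly via Fubini and the monotonicity of $f_i$ (estimate \eqref{est2:J1-J2}), and the contradiction comes from the pointwise bound $\f{\e}{16}(S_1+S_2)(y)<\f14$, which is where $\e S<2$ enters. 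Nothing in your sketch substitutes for this computation, and your heuristic at the crossing point is also sign-confused: if $\phi_1<\phi_2$ on $(0,y_0)$ with equality at $y_0$, then $\phi_1'(y_0)\ge\phi_2'(y_0)$, so showing ``$\phi_1'>\phi_2'$ strictly'' would be \emph{consistent} with, not contradictory to, the crossing; the contradiction must come from the opposite inequality, which is precisely what the integrated identity delivers.

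The second, independent gap is your final sentence: ``once no interior crossing is possible, strict ordering of $\phi_1<\phi_2$ persists up to $y=\infty$, which immediately yields $\phi_1(\infty)\ne\phi_2(\infty)$.'' This does not follow. Strict ordering on $(0,\infty)$ only gives $\phi_1(\infty)\le\phi_2(\infty)$; the two limits (i.e.\ the two masses) could still coincide, and ruling this out is the entire content of the paper's second step, which passes to the limit $y\to\infty$ in the integrated identity \eqref{eq:phi1-phi2-infty} using the exponential decay estimates \eqref{est:phi'infty}--\eqref{est:phi} and \eqref{upperS} to justify that every term converges, and then runs the same $(J_1-J_2)$ estimate on $(0,\infty)$. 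Since the statement you are proving is precisely that $\phi_1$ and $\phi_2$ do not intersect in $(0,\infty]$ --- with the point at infinity included --- this step cannot be waved away.
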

\begin{proof} We may assume without loss of generality that $a_1>a_2$.\\
{\sl First step :} we shall prove that $\phi_{1}(y)>\phi_{2}(y)$ for all $y>0$. Indeed, following \cite{BKLN}, let 
\[
y_{0}:=\sup\{y>0\ \text{ such that  }\ \phi_{1}(z)>\phi_{2}(z)\ \text{ for all } 0<z< y\}\,.
\]
By the assumption above on $a_i=\phi_{i}'(0)$ and the regularity of each $\phi_i$, it holds that $y_0>0$. Assume by contradiction that $y_{0}<\infty$. Then, 
\beq
\phi_{1}(y)>\phi_{2}(y)\quad\text{for all } 0<y<y_{0}\,,\quad\phi_{1}(y_{0})=\phi_{2}(y_{0})\quad\text{and} \quad\phi_{1}'(y_{0})\leq\phi_{2}'(y_{0})\,.
\label{eq:contradiction}
\eeq

Next, let us observe that, owing to the identity (see equation \eqref{S})
\beq\label{S-phi'}
S(y)={\rm e}^{-\e\,y/4}\int_{0}^y{\rm e}^{\e\, z/4}\phi'(z)\,dz\,,
\eeq
system \eqref{phi2}-\eqref{S} can also be equivalently written as a single nonlocal integro-differential equation for~$\phi'$, namely
\begin{equation}
\phi''+\frac14\,\phi'+\frac1{2\,y}\,\phi'\,{\rm e}^{-\e\,y/4}\int_0^y{\rm e}^{\e\,z/4}\,\phi'(z)dz=0\,.
\label{phi3}
\end{equation}
Multiplying equation \eqref{phi3} by $y$, integrating the resulting equation over $[0,y_0]$ and using the initial condition $\phi_i(0)=0$, we obtain that each solution $\phi_i$ satisfies 
\beq
y_0\,\phi_i'(y_0)-\phi_i(y_0)+\f14y_0\,\phi_i(y_0)-\f14\int_0^{y_0}\phi_i(y)\,dy+\f12J_i(y_0)=0\,,
\label{eq-phi''int}
\eeq
where
\[
J_i(y):=\int_0^{y}\phi'_i(z)\,{\rm e}^{-\e\,z/4}\int_0^z{\rm e}^{\e\,\xi/4}\,\phi'_i(\xi)d\xi\,dy\,.
\label{def:J}
\]
Moreover, using \eqref{eq:contradiction}, the difference $(\phi_1-\phi_2)$ satisfies
\beq
y_0(\phi_1'-\phi_2')(y_0)-\f14\int_0^{y_0}(\phi_1-\phi_2)(y)\,dy+\f12(J_1-J_2)(y_0)=0\,.
\label{eq:phi1-phi2}
\eeq
It is worth noticing that $(J_1-J_2)(y_0)=0$ if $\e=0$. In that case, the contradiction follows directly from the sign of the remaining two terms in \eqref{eq:phi1-phi2}. Since here $\e>0$, we have to argue deeply in order to control from above the nonzero term $(J_1-J_2)(y_0)$.

Let $f_i(y):=\int_0^y{\rm e}^{\e\,z/4}\,\phi_i'(z)dz={\rm e}^{\e\,y/4}S_i(y)$, (see \eqref{S-phi'}), so that $J_i(y)$ reads as
\beq
J_i(y)=\f12\int_0^{y}{\rm e}^{-\e\,z/2}(f_i^2)'(z)\,dz=\f12\,{\rm e}^{-\e\,y/2}f_i^2(y)
+\f\e4\int_0^{y}{\rm e}^{-\e\,z/2}f_i^2(z)\,dz\,.
\label{def2:J}
\eeq
For all $y>0$, it holds
\[
(f_1-f_2)(y)=\int_0^y{\rm e}^{\e\,z/4}\,(\phi_1-\phi_2)'(z)dz={\rm e}^{\e\,y/4}\,(\phi_1-\phi_2)(y)
-\f\e4\int_0^y{\rm e}^{\e\,z/4}\,(\phi_1-\phi_2)(z)dz\,,
\]
and, owing to \eqref{eq:contradiction}, 
\[
(f_1-f_2)(y_0)=-\f\e4\int_0^{y_0}{\rm e}^{\e\,z/4}\,(\phi_1-\phi_2)(z)dz<0\,.
\] 
Consequently, the difference $(J_1-J_2)(y_0)$ writes as
\beq
\begin{split}
(J_1-J_2)(y_0)&=\f12\,{\rm e}^{-\e\,y_0/2}(f_1+f_2)(y_0)(f_1-f_2)(y_0)
+\f\e4\int_0^{y_0}{\rm e}^{-\e\,y/2}(f_1+f_2)(y)(f_1-f_2)(y)\,dy\\
&=-\f\e8\,{\rm e}^{-\e\,y_0/2}(f_1+f_2)(y_0)\int_0^{y_0}{\rm e}^{\e\,y/4}\,(\phi_1-\phi_2)(y)\,dy\\
&\quad+\f\e4\int_0^{y_0}{\rm e}^{-\e\,y/4}\,(f_1+f_2)(y)\,(\phi_1-\phi_2)(y)\,dy\\
&\quad-\f{\e^2}{16}\int_0^{y_0}{\rm e}^{-\e\,y/2}(f_1+f_2)(y)\int_0^y{\rm e}^{\e\,z/4}\,(\phi_1-\phi_2)(z)\,dz\,dy\,.
\end{split}
\label{eq:J1-J2}
\eeq
Finally, using the increasing behavior of each $f_i$, the double integral term in the r.h.s. of \eqref{eq:J1-J2} can be estimated as follows 
\beq
\begin{split}
\int_0^{y_0}&{\rm e}^{-\e\,y/2}(f_1+f_2)(y)\int_0^y{\rm e}^{\e\,z/4}\,(\phi_1-\phi_2)(z)\,dz\,dy=
\int_0^{y_0}{\rm e}^{\e\,z/4}\,(\phi_1-\phi_2)(z)\int_z^{y_0}{\rm e}^{-\e\,y/2}(f_1+f_2)(y)\,dy\,dz\\
&\ge\int_0^{y_0}{\rm e}^{\e\,z/4}\,(\phi_1-\phi_2)(z)(f_1+f_2)(z)\int_z^{y_0}{\rm e}^{-\e\,y/2}\,dy\\
&=\f2\e\int_0^{y_0}{\rm e}^{-\e\,z/4}\,(\phi_1-\phi_2)(z)(f_1+f_2)(z)\,dz
-\f2\e\,{\rm e}^{-\e\,y_0/2}\int_0^{y_0}{\rm e}^{\e\,z/4}\,(\phi_1-\phi_2)(z)(f_1+f_2)(z)\,dz\\
&\ge\f2\e\int_0^{y_0}{\rm e}^{-\e\,z/4}\,(\phi_1-\phi_2)(z)(f_1+f_2)(z)\,dz
-\f2\e\,{\rm e}^{-\e\,y_0/2}(f_1+f_2)(y_0)\int_0^{y_0}{\rm e}^{\e\,z/4}\,(\phi_1-\phi_2)(z)\,dz\,.
\label{est:J1-J2}
\end{split}
\eeq
Plugging \eqref{est:J1-J2} into \eqref{eq:J1-J2} and rearranging the terms, we obtain the estimate
\beq
(J_1-J_2)(y_0)\le\f\e8\int_0^{y_0}{\rm e}^{-\e\,y/4}\,(f_1+f_2)(y)\,(\phi_1-\phi_2)(y)\,dy\,,
\label{est2:J1-J2}
\eeq
that in turn, plugged into identity \eqref{eq:phi1-phi2}, gives us 
\beq
y_0(\phi_1'-\phi_2')(y_0)-\f14\int_0^{y_0}(\phi_1-\phi_2)(y)\,dy+\f\e{16}\int_0^{y_0}{\rm e}^{-\e\,y/4}\,(f_1+f_2)(y)\,(\phi_1-\phi_2)(y)\,dy\ge0\,.
\label{eq:fin-contradiction}
\eeq
Then, since by \eqref{S-phi'} and Proposition \ref{pr:est_S<2} it holds 
\[
\f\e{16}\,{\rm e}^{-\e\,y/4}\,(f_1+f_2)(y)=\f\e{16}\,(S_1+S_2)(y)<\f14\,,
\] 
inequality \eqref{eq:fin-contradiction} implies the contradiction, taking into account that $y_0(\phi_1'-\phi_2')(y_0)\le0$.
\medskip

{\sl Second step :} we shall prove that $\phi_1$ and $\phi_2$ do not cross at infinity, i.e. $\phi_{1}(\infty)>\phi_{2}(\infty)$. From equation \eqref{eq-phi''int}, true for any $y>0$, we have
\beq
y(\phi_{1}'-\phi_{2}')(y)-(\phi_{1}-\phi_{2})(y)+\f y4(\phi_{1}-\phi_{2})(y)
-\f14\int_0^y(\phi_{1}-\phi_{2})(z)\,dz+\f12(J_1-J_2)(y)=0\,.
\label{eq:phi1-phi2-infty}
\eeq
Next, let $\phi_{1}(\infty)=\frac{M_{1}}{2\pi}$ and $\phi_{2}(\infty)=\frac{M_{2}}{2\pi}$. From the previous step we know that $M_{1}\geq M_{2}$.  Assume  $M_1=M_2=M$. By \eqref{est:phi}, it follows that, for all $y>0$,
$$
0<\phi_{1}(y)-\phi_{2}(y)\leq \f M{2\pi}\,{\rm e}^{-\frac{y}{4}}.
$$
Hence, $(\phi_{1}-\phi_{2})\in L^1(0,\infty)$ and $\lim_{y\rightarrow \infty}(\phi_{1}-\phi_{2})(y)=\lim_{y\rightarrow \infty}y\,(\phi_{1}-\phi_{2})(y)=0$. Furthermore, by~\eqref{est:phi'infty} it follows that $\lim_{y\rightarrow \infty}y\,(\phi'_{1}-\phi'_{2})(y)=0$, while by \eqref{def2:J}, the identity ${\rm e}^{-\e\,y/2}f_i^2(y)=S_i^2(y)$ and estimate \eqref{upperS}, we get
\[
\lim_{y\rightarrow \infty}J_i(y)=\f\e4\int_0^{\infty}{\rm e}^{-\e\,y/2}f_i^2(y)\,dy<\infty\,.
\]
Therefore, we are allowed to let $y\to\infty$ in \eqref{eq:phi1-phi2-infty} to obtain
\beq
-\f14\int_0^\infty(\phi_{1}-\phi_{2})(z)\,dz+\f12(J_1-J_2)(\infty)=0\,.
\label{eq:phi1-phi2-infty-bis}
\eeq
Proceeding exactly as in \eqref{eq:J1-J2} and \eqref{est:J1-J2}, the difference $(J_1-J_2)(\infty)$ can be estimated as follows
\[
(J_1-J_2)(\infty)\le\f\e8\int_0^{\infty}{\rm e}^{-\e\,y/4}\,(f_1+f_2)(y)\,(\phi_1-\phi_2)(y)\,dy\,,
\]
the equivalent of \eqref{est2:J1-J2} for $y_0\to\infty$. Finally, plugging the latter estimate into \eqref{eq:phi1-phi2-infty-bis}, we obtain
\[
-\f14\int_0^{\infty}(\phi_1-\phi_2)(y)\,dy+\f\e{16}\int_0^{\infty}{\rm e}^{-\e\,y/4}\,(f_1+f_2)(y)\,(\phi_1-\phi_2)(y)\,dy\ge0\,,
\]
and the contradiction follows as in the first step.
\end{proof}
We are now able to prove Theorem \ref{th:uniqueness_self-similar} and Proposition \ref{pr:continuity}.

\begin{proof}[Proof of Theorem \ref{th:uniqueness_self-similar}]
For $\e>0$, let $m(\e):=\sup\{M(a,\e)\,;\,0<a<\max\{A(\e),1\}\}$ and
\beq\label{eq:tilde M(e)}
\widetilde M(\e):=\left\{
\begin{split}
&8\pi\qquad\text{if }\e\in(0,\f12]\\
&4\pi\,{\rm e}^{1-\f1{2\e}}\qquad\text{if }\e\in(\f12,1)\\
&4\pi\max\{1,\e^{-1}\,{\rm e}^{1-\f1{2\e}}\}\qquad\text{if }\e\ge1
\end{split}
\right.
\eeq
Lemma \ref{lm:uniquennes_self-similar} shows that the continuous map $a\in(0,\max\{A(\e),1\})\mapsto M(a,\e)\in(0,m(\e))$ is strictly increasing. Therefore, for any $M<\min\{m(\e),\widetilde M(\e)\}$ there exists a unique positive self-similar solution with profile $(U_M,V_M)\in(C^2_0(\R^2))^2$ and corresponding shooting parameter satisfying $a<\max\{A(\e),1\}$. 

Next, if $\e\le\f12$, owing to \eqref{eq:Matinfinity}-\eqref{eq:A(e)}, it holds $A(\e)=+\infty$ and $m(\e)=\widetilde M(\e)=8\pi$ and the theorem follows in that case. On the other hand, if $\e\in(\f12,1)$, by $M<\widetilde M(\e)$ and \eqref{eq:est a w.r.t. M} we have that the corresponding shooting parameter satisfies 
\[
a\le\f{\e\,M}{8\pi\e-M}<A(\e)\,.
\]
Therefore, $\min\{m(\e),\widetilde M(\e)\}=\widetilde M(\e)$ and the theorem is proved also in that case. Finally, if $\e\ge1$, the proof follows exactly as in the previous case. 
\end{proof}

\begin{proof}[Proof of Proposition \ref{pr:continuity}]
Let $(\phi_n,S_n)$ be the solution of \eqref{phi2}-\eqref{S}-\eqref{ic-a} with $a_n=U_{M_n}(0)/2$ and $M_n=M(a_n,\e)$ corresponding to $(U_{M_n},V_{M_n})$. Similarly, let $(\phi,S)$ be the solution of \eqref{phi2}-\eqref{S}-\eqref{ic-a} with $a=U_M(0)/2$ and $M=M(a,\e)$ corresponding to $(U_M,V_M)$. It has been proved above that $a_n,\,a \in(0,\max\{A(\e),1\})$. Then, by the strictly increasing behaviour of the continuous map $a\in(0,\max\{A(\e),1\})\mapsto M(a,\e)\in(0,m(\e))$, the inverse map is continuous and $\lim_{n\to\infty}a_n=a$.

In order to obtain the desired continuity result \eqref{eq:convergence} for $p=1$ and $r=2$, we shall prove that
\[
(\phi'_n\,,S_n)\to(\phi',S)\qquad\text{in }(L^1(0,\infty))^2\quad\text{as}\quad n\to\infty\,,
\]
since, by the definitions of the cumulated densities, the radial symmetry of the profiles and estimate \eqref{upperS}, it follows easily that
\[
\|U_{M_n}-U_M\|_{L^1(\R^2)}=2\pi\,\|\phi'_n-\phi'\|_{L^1(0,\infty)}
\]
and
\[
\|\nabla V_{M_n}-\nabla V_M\|^2_{L^2(\R^2)}=2\pi\,\|(S_n-S)\,y^{-1/2}\|^2_{L^2(0,\infty)}\le
2\pi(a_n+a)\|S_n-S\|_{L^1(0,\infty)}\,.
\]

From the inequality (see \cite{BCD} Theorem 2)
\[
|\log\phi'_n(y)-\log\phi'(y)|\le{\rm e}^{C(n,\e)}|\log a_n-\log a|\,,\qquad y>0\,,
\]
where $C(n,\e)=2\,\f{\log\e}{\e-1}\,{\rm e}^{\max\{\log a_n,\,\log a\}}$, it follows that $\phi'_n\to\phi'$ as $n\to\infty$ uniformly on $(0,\infty)$. Owing to estimate \eqref{est:phi'infty}, we are allowed to apply the Lebesgue's dominated convergence Theorem to obtain the converge of $\phi'_n$ toward $\phi'$ in $L^1(0,\infty)$. The converge of $S_n$ toward $S$ in $L^1(0,\infty)$ follows in the same way, using identity \eqref{S-phi'} and estimate \eqref{upperS}.

Next, recalling that $U_{M_n}$ and $U_M$ are positive radially symmetric about the origin and decreasing functions, we have, for any $\e>0$ and $n$ sufficiently large,
\beq\label{eq:UMn bound}
\|U_{M_n}\|_{L^\infty(\R^2)}=U_{M_n}(0)=2\,a_n\le2(a+1)\quad\text{and}\quad
\|U_M\|_{L^\infty(\R^2)}=U_M(0)=2\,a\,.
\eeq
On the other hand, since $S(y)=-4\,y\,\psi''(y)$, by the definition \eqref{psi} of $\psi$ and estimate \eqref{upperS} again, it holds, for any $\e>0$ and $n$ sufficiently large,
\beq\label{eq:gradVMn bound}
\|\nabla V_{M_n}\|_{L^\infty(\R^2)}=\sup_{y>0}\frac{S_n(y)}{\sqrt{y}}\le 4+a_n\le 4+(a+1)\quad\hbox{and}\quad
\|\nabla V_{M}\|_{L^\infty(\R^2)}\le4+a\,.
\eeq
Then, \eqref{eq:convergence} for $p\in(1,\infty)$ and $r\in(2,\infty)$ follows by interpolation and the proved convergence for $p=1$ and $r=2$.
\end{proof}
\begin{remark}
As a byproduct of the previous results, we obtain that the map $a\mapsto M(a,\e)$ is strictly increasing from $\R_+$ to $[0,8\pi)$, if $\e\le\f12$. 
\end{remark}
\begin{figure}[htp]
\begin{center}
\includegraphics[width=10cm,height=5cm]{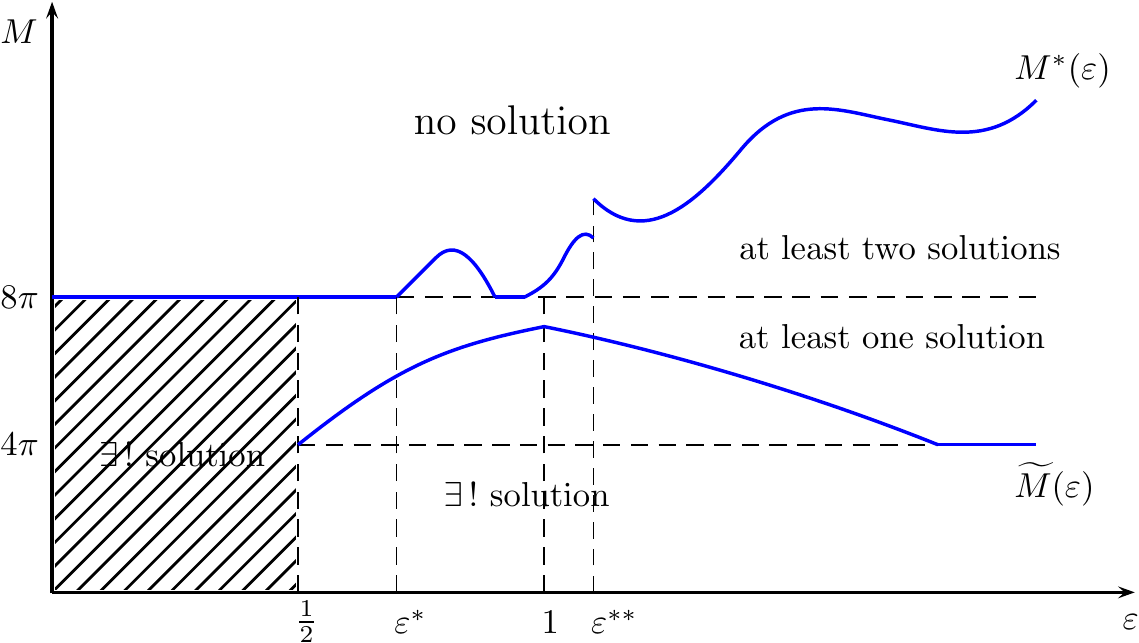}
\end{center}
\caption{range of existence and uniqueness of self-similar solutions $(u_M,v_M)$.
}
\label{fig:Mtilde}
\end{figure}
\begin{remark}\label{rk:comparison}
The profiles $U_M$ and $V_M$ are in all the $L^p(\R^2)$ spaces for all $p\in [1, \infty]$, as a consequence of their exponential decay. The corresponding self-similar solutions satisfy as $t\to 0^+$ : $u_M(t)\rightharpoonup  M\delta _0$ in the sense of measures, and $\|v_M(t)\|_{L^p(\R^2)}=t^{1/p}\|V_M\|_{L^p(\R^2)}\to0$ for all $p\in[1,\infty)$. 
Moreover,
\[
\|\nabla v_M(t)\|_{L^p(\R^2)}=t^{\f1p-\f12}\|\nabla V_M\|_{L^p(\R^2)}=t^{\f1p-\f12}\pi^{1/p}\|y^{-1/2}S(y)\|_{L^p(0,\infty)} \to 0\,,\quad \text{as } t\to 0^+\,,
\]
for all $p\in [1, 2)$. Therefore, the initial data of the self-similar solutions constructed in \cite{BCD} and considered here is compatible with the initial data of the self-similar solutions whose existence and uniqueness has been obtained in \cite{Ferreira,N06} for $\e=1$ under a smallness condition.
\end{remark}
\section{Long time behavior : the case $\alpha=0$}
\label{sec:alpha0}
In order to prove that in the case $\alpha=0$, non-negative global integral solutions behave like self-similar solutions for large $t$, we introduce the following space-time rescaled functions $(\tilde u,\tilde v)$ 
%
\beq
u(x,t)=\f1{(t+1)}\tilde u\left(\f x{\sqrt{t+1}},\log(t+1)\right)\quad\hbox{and}\quad v(x,t)=\tilde v\left(\f x{\sqrt{t+1}},\log(t+1)\right)\,,
\label{eq:tilde_uv}  
\eeq
or equivalently
\beq
\tilde u(\xi,s)=e^s\,u(\xi\,e^{\f s2},e^s-1)\quad\hbox{and}\quad \tilde v(\xi,s)=v(\xi\,e^{\f s2},e^s-1)\,,
\label{eq:tilde_uv 2}  
\eeq
where $\xi=x/\sqrt{t+1}$ and $s=\log(t+1)$. Then, $(\tilde u,\tilde v)$ satisfies the parabolic-parabolic system
\begin{eqnarray}
\label{KS_tilde_u}
\tilde u_s&=&\Delta\tilde u+\f\xi2\cdot\nabla\tilde u+\tilde u-\nabla\cdot\left(\tilde u\nabla\tilde v\right)\,,\\
\label{KS_tilde_v}
\e\,\tilde v_s&=&\Delta \tilde v+\e\,\f\xi2\cdot\nabla \tilde v+\tilde u\,,
\end{eqnarray}
where the differential operators are taken with respect to $\xi$. Moreover, $\tilde u(\xi,0)=u_0(\xi)$, $\tilde v(\xi,0)=v_0(\xi)$ and the mass of $\tilde u$ is conserved and is equal to the mass of $u$. 

The interest of the above change of variables  is that the stationary solutions $(U,V)$ of system \eqref{KS_tilde_u}-\eqref{KS_tilde_v} are solutions of the elliptic system \eqref{KSuSS}-\eqref{KSvSS}. Therefore, if one proves on the one hand that the solution $(\tilde u,\tilde v)$ of \eqref{KS_tilde_u}-\eqref{KS_tilde_v} converge toward a stationary solution $(U,V)$, i.e. satisfies
\[
\lim_{s\rightarrow \infty}\|\tilde u(s)- U\|_{L^p(\R^2)}\quad\text{and}\quad
\lim_{s\rightarrow \infty}\|\nabla\tilde v(s)-\nabla V\|_{L^r(\R^2)}\,,
\]
for some $p$ and $r$, and on the other hand that $(U,V)$ is the unique solution $(U_M,V_M)$ of \eqref{KSuSS}-\eqref{KSvSS} corresponding to $M=\int_{\R^2}u_0$, then undoing the change of variable, it follows  
\beq
\lim_{t\rightarrow \infty}t^{1-\frac{1}{p}}\|u(t)-u_M(t)\|_{L^p(\R^2)}=0\quad\text{and}\quad
\lim_{t\rightarrow \infty}t^{\frac{1}{2}-\frac{1}{r}}\|\nabla v(t)-\nabla v_M(t)\|_{L^r(\R^2)}=0\,,
\label{lim_uMvM}
\eeq
where $(u_M,v_M)$ is the self-similar solution \eqref{eq:SS_UV} with profile $(U_M,V_M)$. Equally important is also the fact that this change of variables allows to work with differential operators having strong compactness properties.

This technique is nowadays classical and has been exploited for instance in \cite{EK87b,O87} and \cite{EZ91a} for the analysis of the long-time behaviors of global solutions of the non-linear heat equation and of a convection-diffusion equation respectively. However, its application to the Keller-Segel system \eqref{KSu}-\eqref{KSv} is not straightforward and it is new, to the best of our knowledge. Before stating our main results, we shall introduce the functional framework naturally associated to system \eqref{KS_tilde_u}-\eqref{KS_tilde_v}. 

Let us consider the following weighted spaces 
\[
L^2(K_\theta):=\{f: \|f\|_{L^2(K_\theta)}^2=\int_{\R^2}|f(\xi)|^2K_\theta(\xi)\,d\xi<\infty\},
\]
and
\[
H^k(K_\theta):=\{f\in L^2(K_\theta): \|f\|_{H^k(K_\theta)}^2=\sum_{|l|\le k}\|D^lf\|_{L^2(K_\theta)}^2<\infty\}\,,\quad k=1,2,\dots\,,
\]
where $K_\theta(\xi):={\rm e}^{\theta\, |\xi|^2/4}$ and $\theta>0$. It is well known (see \cite{EK87,O87}) that the operator
\beq
L_\theta f:=-(\Delta f+\theta\,\frac{\xi}{2}\cdot\nabla f)=-\frac{1}{K_\theta}\nabla \cdot(K_\theta\nabla f)
\label{eq:L}
\eeq
is a positive self adjoint operator on $H^2(K_\theta)=D(L_\theta)$, with eigenvalues given by $\lambda_k=\theta\,\frac{k+1}{2}$, $k\in\N^*$ .
Moreover, the embedding $H^1(K_\theta)\subset L^2(K_\theta)$ is compact and $H^1(K_\theta)\subset L^q(K_{q\,\theta/2})$ for any $q\in[2,\infty)$. In the sequel, we shall use $\theta=1$, $\theta=\e$ and $\theta=\tau:=\min\{1,\e\}$. 

Next, let $S$ be the analytic semigroup generated by $(L_1-I)$ on $L^2(K_1)$, i.e. (\cite{EZ91a}, \cite{O87})
\[
S(s)f(\xi)={\rm e}^s(G({\rm e}^s-1)*f)({\rm e}^{s/2}\xi)\,,\quad s>0\,,\ \xi\in \R^2\,,
\]
and 
$S_{\e}$ be the analytic semigroup generated by $\e^{-1}L_{\e}$ on $L^2(K_{\e})$, i.e. for all $\e>0$
\[
S_{\e}(s)f(\xi)=\left(G\left(\frac{{\rm e}^s-1}{\varepsilon}\right)*f\right)({\rm e}^{s/2}\xi)\,,\quad s>0\,,\ \xi\in \R^2\,.
\]
The following inequalities hold true (see \cite{EK87,O87}), for $s>0$,
\begin{equation}
\|S(s)f\|_{H^1(K_1)}\leq C\,(1+s^{-1/2})\|f\|_{L^2(K_1)}\,,\quad f\in L^2(K_1)\,,
\label{ineq-S*}
\end{equation}
and
\beq
\|S_{\e}(s)f\|_{H^1(K_\tau)}\leq C\,(1+(s/\e)^{-1/2})\|f\|_{L^2(K_\tau)}\,,\quad f\in L^2(K_\tau)\,.
\label{ineq:Se}
\eeq
When $\e\le1$, so that $\tau=\e$, inequality \eqref{ineq:Se} is an immediate consequence of the fact that $L_\e$ is a positive self adjoint operator on $H^2(K_\e)$. When $\e>1$, \eqref{ineq:Se} is not so straightforward. Since the authors do not found any useful references, its proof is given in the Appendix \ref{sec:appendix}. 

With these new semigroups, the integral solution $(\tilde u,\tilde v)$ of \eqref{KS_tilde_u}-\eqref{KS_tilde_v} is given by
\begin{eqnarray}
\label{eq:Duhamel_tilde_u}
&&\tilde u(s+s_0)=S(s)\tilde u(s_0)-
\int_0^s S(s-\sigma)(\nabla\cdot (\tilde u\,\nabla\tilde v(\sigma+s_0)))\,d\sigma\quad\quad \\
\label{eq:Duhamel_tilde_v}
&&\tilde v(s+s_0)=S_{\e}(s)\tilde v(s_0)
+\frac{1}{\e}\int_0^s S_{\e}(s-\sigma)\tilde u(\sigma+s_0)\,d\sigma\,,
\end{eqnarray}
for any $s,s_0\ge0$, and corresponds to the integral solution $(u,v)$ of \eqref{KSu}-\eqref{KSv} through \eqref{eq:tilde_uv}  or \eqref{eq:tilde_uv 2}.  Moreover, for any fixed $s_0>0$, there exists $C=C(s_0)>0$ such that  for any $p\in[1,\infty]$ it holds
\beq
\|\tilde u(s+s_0)\|_{L^{p}(\R^2)}\le C(s_0)^{1-\f1p}\,,\qquad s\ge0\,,
\label{est:suptildeu}
\eeq
and for any $r\in[2,\infty]$ it holds
\beq
\|\nabla\tilde v(s+s_0)\|_{L^r(\R^2)}\leq C(s_0)^{\f12-\f1r}\,, \qquad\|\Delta\tilde v(s+s_0)\|_{L^r(\R^2)}\leq C(s_0)^{1-\f1r}\,,\qquad s\ge0\,.
\label{eq:bo1}
\eeq
These estimates are inherited by the decay properties \eqref{reg_effetct1}-\eqref{reg_effetct2} and \eqref{eq:delta v} on $(u,v)$, and the constant $C(s_0)\sim s_0^{-1}$ as $s_0$ tends to 0. 

The Lemma below will be also used in the sequel. 
\begin{lemma}[\cite{EK87}]\label{lm:EK}
Let $g(\xi):=\theta\f{|\xi|^2}4$, $\theta>0$. The following inequality holds true for all $f\in H^1(K_\theta)$
\beq
\f12\int_{\R^2}|f(\xi)|^2K_\theta(\xi)(\Delta g+\f12|\nabla g|^2)(\xi)\,d\xi\le\int_{\R^2}|\nabla f(\xi)|^2K_\theta(\xi)\,d\xi\,.
\label{eq:moment dans L2}
\eeq
Furthermore, given any $\delta>0$ and $q>2$, there exist $C(\delta,q)>0$ and $R(\delta)>0$ such that  for all $f\in H^1(K_\theta)\cap L^q_{loc}(\R^2)$ it holds
\[
\|f\|^2_{L^2(K_\theta)}\leq \delta\,\|\nabla f\|^2_{L^2(K_\theta)}+C(\delta,q)\|f\|^2_{L^q(B(0,R))}\,.
\label{ineq-H1K}
\]
\end{lemma}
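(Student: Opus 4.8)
The plan is to derive both estimates from the ``ground state'' substitution $w:=f\,K_\theta^{1/2}=f\,{\rm e}^{g/2}$, which turns the weighted gradient into an ordinary one. I would first record the elementary computations (in the plane) $\nabla g=\theta\,\xi/2$, $\Delta g=\theta$ and $|\nabla g|^2=\theta^2|\xi|^2/4$, so that
\[
\Delta g+\f12|\nabla g|^2=\theta+\f{\theta^2}8|\xi|^2\,,
\]
together with the relation $\nabla K_\theta=K_\theta\,\nabla g$, which is immediate from $K_\theta={\rm e}^{g}$. The point of the substitution is that $\norm{f}_{L^2(K_\theta)}^2=\norm{w}_{L^2(\R^2)}^2$, while $\nabla w={\rm e}^{g/2}\bigl(\nabla f+\tfrac12 f\nabla g\bigr)$.

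For the first inequality I would work with $f\in C_0^\infty(\R^2)$ and extend to $H^1(K_\theta)$ by density at the end. Expanding $|\nabla w|^2=K_\theta\bigl(|\nabla f|^2+f\,\nabla f\cdot\nabla g+\tfrac14 f^2|\nabla g|^2\bigr)$ and integrating, the cross term is rewritten as $\int K_\theta f\,\nabla f\cdot\nabla g=\f12\int K_\theta\nabla(f^2)\cdot\nabla g$; integrating by parts and using $\nabla\cdot(K_\theta\nabla g)=K_\theta(|\nabla g|^2+\Delta g)$ (which follows from $\nabla K_\theta\cdot\nabla g=K_\theta|\nabla g|^2$) turns it into $-\f12\int f^2 K_\theta(\Delta g+|\nabla g|^2)$. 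Collecting the three contributions yields the identity
\[
\int_{\R^2}|\nabla w|^2\,d\xi=\int_{\R^2}K_\theta|\nabla f|^2\,d\xi-\f12\int_{\R^2}f^2 K_\theta\Bigl(\Delta g+\f12|\nabla g|^2\Bigr)\,d\xi\,,
\]
and \eqref{eq:moment dans L2} is obtained simply by discarding the nonnegative left-hand side.

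For the second inequality I would use \eqref{eq:moment dans L2} in the form $\int f^2 K_\theta\bigl(\theta+\f{\theta^2}8|\xi|^2\bigr)\le 2\int|\nabla f|^2K_\theta$, which controls the tail. Splitting $\R^2$ into $B(0,R)$ and its complement: on $\{|\xi|>R\}$ the weight satisfies $\theta+\f{\theta^2}8|\xi|^2\ge\f{\theta^2}8R^2$, whence $\int_{|\xi|>R}f^2K_\theta\le\f{16}{\theta^2R^2}\norm{\nabla f}_{L^2(K_\theta)}^2$, and this is bounded by $\delta\,\norm{\nabla f}_{L^2(K_\theta)}^2$ as soon as $R=R(\delta)$ is taken large enough. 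On $B(0,R)$ the Gaussian weight is bounded, $K_\theta\le{\rm e}^{\theta R^2/4}$, and Hölder's inequality with exponents $q/2>1$ and its conjugate gives $\int_{B(0,R)}f^2\le|B(0,R)|^{\,1-2/q}\norm{f}_{L^q(B(0,R))}^2$, which is where the hypothesis $q>2$ enters. Adding the two bounds produces \eqref{ineq-H1K} with $C(\delta,q)={\rm e}^{\theta R(\delta)^2/4}\,|B(0,R(\delta))|^{\,1-2/q}$.

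The only genuinely delicate point is the justification of the integration by parts for a general $f\in H^1(K_\theta)$, rather than a compactly supported one: one must either invoke the density of $C_0^\infty(\R^2)$ in $H^1(K_\theta)$ for these Gaussian-weighted spaces, or truncate $f$ and verify that the boundary terms $\int_{\{|\xi|=R\}}f^2K_\theta\,\partial_\nu g$ stay controlled by the weighted norm as $R\to\infty$. Everything else is bookkeeping of constants; it is worth emphasizing that the quadratic growth of the weight $\Delta g+\f12|\nabla g|^2$ is precisely what upgrades the fixed-constant Poincaré inequality already contained in \eqref{eq:moment dans L2} into a bound with an arbitrarily small coefficient $\delta$ in front of the gradient term.
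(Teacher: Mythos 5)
Your argument is correct. Note that the paper itself offers no proof of this lemma: it is quoted from \cite{EK87}, so there is nothing internal to compare against; your ground-state substitution $w=f\,{\rm e}^{g/2}$ is precisely the classical Escobedo--Kavian computation, and the identity $\int|\nabla w|^2=\int K_\theta|\nabla f|^2-\f12\int f^2K_\theta(\Delta g+\f12|\nabla g|^2)$ together with positivity of the left-hand side gives \eqref{eq:moment dans L2} exactly as you say. The second estimate (tail controlled via the quadratic growth of $\Delta g+\f12|\nabla g|^2$, core controlled by H\"older with exponent $q/2$) is also complete; the only points worth recording explicitly are (i) the density of $C_0^\infty(\R^2)$ in $H^1(K_\theta)$, with the limit passage in \eqref{eq:moment dans L2} handled by Fatou on the left-hand side since $\int f^2K_\theta|\xi|^2$ is not a priori finite for $f\in H^1(K_\theta)$, and (ii) the harmless fact that your $R(\delta)$ and $C(\delta,q)$ also depend on $\theta$, which is fixed in all applications in the paper.
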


With the help of the functional setting introduced above, we shall prove the following.

\begin{proposition}[Long time behavior I]\label{prop:longtime-alpha=0}
Assume $\e>0$, $\alpha=0$, $u_0\in L^2(K_1)$,  $v_0\in H^1(K_\tau)$, $u_0\ge0$, $v_0\ge0$. Let $(u,v)$ be a non-negative global solution of \eqref{KSu}-\eqref{KSv} such that $u\in L^\infty((0,\infty),L^1(\R^2))$, $v\in L^\infty((0,\infty),\dot{H}^1(\R^2))$ and satisfying estimates \eqref{reg_effetct1}-\eqref{reg_effetct2} and \eqref{eq:grad u}-\eqref{eq:delta v}. Then, if $M=\int_{\R^2}u_0<M^*(\e)$, there exists $t_n\to\infty$ and a self-similar solution $(u_M,v_M)$ s.t. 
\beq\label{eq:long time behavior I}
\lim_{t_n\rightarrow \infty}t_n^{1-\frac{1}{p}}\|u(t_n)-u_M(t_n)\|_{L^p(\R^2)}=0\quad\text{and}\quad
\lim_{t_n\rightarrow \infty}t_n^{\,\frac{1}{2}-\frac{1}{r}}\|\nabla v(t_n)-\nabla v_M(t_n)\|_{L^r(\R^2)}=0\,,
\eeq
for any $p\in[1,\infty]$ and $r\in[2,\infty]$.
\end{proposition}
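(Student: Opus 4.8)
The plan is to run the whole argument in the self-similar variables $(\tilde u,\tilde v)$ of \eqref{eq:tilde_uv 2}, which solve the confined system \eqref{KS_tilde_u}-\eqref{KS_tilde_v}, and to show that along a suitable sequence $s_n:=\log(t_n+1)\to\infty$ the orbit $(\tilde u(s_n),\nabla\tilde v(s_n))$ converges to the profile $(U_M,\nabla V_M)$ of a stationary solution of \eqref{KSuSS}-\eqref{KSvSS}. Once this convergence is secured in $L^2(K_1)\times L^2(K_\tau)$, the uniform bounds \eqref{est:suptildeu}-\eqref{eq:bo1} upgrade it by interpolation to every $L^p$, $p\in[1,\infty]$, and $L^r$, $r\in[2,\infty]$; undoing the rescaling exactly as in \eqref{lim_uMvM} then produces \eqref{eq:long time behavior I}. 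Thus everything reduces to a compactness-and-identification statement for the rescaled flow.

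First I would secure the compactness. Writing $(\tilde u,\tilde v)$ through the Duhamel formulas \eqref{eq:Duhamel_tilde_u}-\eqref{eq:Duhamel_tilde_v} and combining the smoothing estimates \eqref{ineq-S*}-\eqref{ineq:Se} for $S$ and $S_\e$ with the Gronwall Lemma~\ref{lem:Gronwall}, I would propagate the weighted integrability of the data and obtain $\sup_{s\ge s_0}\|\tilde u(s)\|_{H^1(K_1)}<\infty$ and $\sup_{s\ge s_0}\|\nabla\tilde v(s)\|_{H^1(K_\tau)}<\infty$ for every $s_0>0$. Since the embedding $H^1(K_\theta)\subset L^2(K_\theta)$ is compact, the orbit is then relatively compact in $L^2(K_1)\times L^2(K_\tau)$, so any sequence has a subsequence with $\tilde u(s_n)\to U$ in $L^2(K_1)$ and $\nabla\tilde v(s_n)\to\nabla V$ in $L^2(K_\tau)$. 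Because $K_1^{-1/2}\in L^2(\R^2)$, convergence in $L^2(K_1)$ forces convergence in $L^1(\R^2)$, so no mass escapes and $\int_{\R^2}U\,d\xi=\int_{\R^2}u_0=M$.

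Next I would identify the limit through the free energy of \eqref{KS_tilde_u}-\eqref{KS_tilde_v}. With $w:=\log\tilde u+\f{|\xi|^2}4-\tilde v$ the $\tilde u$-equation reads $\tilde u_s=\nabla\cdot(\tilde u\,\nabla w)$, and a direct computation shows that
\[
\mathcal F(s):=\int_{\R^2}\tilde u\log\tilde u\,d\xi+\f14\int_{\R^2}|\xi|^2\tilde u\,d\xi-\int_{\R^2}\tilde u\,\tilde v\,d\xi+\f12\int_{\R^2}|\nabla\tilde v|^2\,d\xi
\]
satisfies
\[
\f{d}{ds}\mathcal F(s)=-\int_{\R^2}\tilde u\,|\nabla w|^2\,d\xi-\e\int_{\R^2}\tilde v_s^2\,d\xi+\e\int_{\R^2}\f\xi2\cdot\nabla\tilde v\,\tilde v_s\,d\xi\,.
\]
Using the weighted a priori bounds above one checks that $\mathcal F$ is bounded below and that $\mathcal F(s)$ converges as $s\to\infty$, whence a sequence $s_n\to\infty$ along which $\int\tilde u(s_n)|\nabla w(s_n)|^2\,d\xi\to0$ and $\int\tilde v_s(s_n)^2\,d\xi\to0$. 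Passing to the limit in the convergent subsequence, the first condition and the positivity of $U$ force $\nabla\bigl(\log U+\f{|\xi|^2}4-V\bigr)=0$, i.e. \eqref{KSuSS}, while the second gives $\Delta V+\f\e2\,\xi\cdot\nabla V+U=0$, i.e. \eqref{KSvSS}. By the characterization of \cite{NSY02} the pair $(U,V)$ is then the profile of a self-similar solution, and since $\int U=M<M^*(\e)$ such a profile exists by \cite{BCD}, so $(U,V)=(U_M,V_M)$ for some self-similar $(u_M,v_M)$, which is the desired conclusion.

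The hard part will be precisely the dissipation identity for $\mathcal F$. The $\tilde u$- and $\tilde v$-equations are adapted to the two distinct Gaussian weights $K_1$ and $K_\e$, and the coupling term $\e\int\f\xi2\cdot\nabla\tilde v\,\tilde v_s\,d\xi$ generated when differentiating $\mathcal F$ is not sign-definite and does not vanish at equilibrium, so $\mathcal F$ is not monotone in an elementary way and the crude estimate of this term only yields dissipations that are bounded on average rather than vanishing. Making the argument rigorous therefore requires exploiting the weighted smoothing estimates \eqref{ineq-S*}-\eqref{ineq:Se} and the interpolation inequality of Lemma~\ref{lm:EK} to absorb and control this coupling term, thereby showing that $\mathcal F$ nevertheless converges and that both pieces of the dissipation vanish along a common sequence. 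This is where most of the effort lies; the extraction of $s_n$, the mass identity and the final interpolation to all exponents are comparatively routine.
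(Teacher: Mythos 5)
Your overall architecture coincides with the paper's: pass to the self-similar variables \eqref{eq:tilde_uv 2}, prove relative compactness of the rescaled orbit in weighted spaces, identify the $\omega$-limit set with stationary solutions of \eqref{KSuSS}-\eqref{KSvSS} via a Lyapunov functional, and finish by interpolation and by undoing the scaling. But the step you yourself flag as ``the hard part'' is a genuine gap, and it is precisely the point where the paper's choice of functional differs from yours. You take the confined free energy $\mathcal F$, containing the moment term $\f14\int|\xi|^2\tilde u$, and correctly compute that its dissipation carries the cross term $\f\e2\int(\xi\cdot\nabla\tilde v)\,\tilde v_s\,d\xi$, which is not sign-definite. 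From there nothing closes: boundedness of $\mathcal F$ does not imply that $\mathcal F(s)$ converges, and even if it did, the two good dissipation terms need not vanish along any sequence because the cross term can balance them. The paper sidesteps this entirely by working with $\tilde{\mathcal E}$ in \eqref{eq:tildeE}, i.e.\ the \emph{original} free energy \eqref{eq:fe} merely rewritten in the new variables (which equals your $\mathcal F - Ms - \f14\int|\xi|^2\tilde u$): its dissipation \eqref{eq:tildeE'} is an exact sum of two nonpositive squares, $-\int\tilde u|\nabla(\log\tilde u-\tilde v)|^2$ and $-\e\int(\partial_s\tilde v-\f\xi2\cdot\nabla\tilde v)^2$, with no cross term, so monotonicity is immediate and LaSalle applies. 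That substitution is the missing idea; without it your identification of the limit as a solution of \eqref{KSuSS}-\eqref{KSvSS} is not established.

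A second, smaller gap is in the compactness step. You propose to get $\sup_{s\ge s_0}\|\tilde u(s)\|_{H^1(K_1)}<\infty$ from Duhamel, the smoothing estimates \eqref{ineq-S*}-\eqref{ineq:Se} and Gronwall's Lemma~\ref{lem:Gronwall}; but that lemma only yields constants depending on the length $T$ of the time interval, so this argument alone gives local-in-time, not uniform-in-time, bounds. The paper's Theorem~\ref{S4Thm6} needs more: it decomposes $\tilde u = M\varphi + w$ with $w$ orthogonal to the first eigenspace of $L_1$, uses the spectral gap $\lambda_2=\f32>1$ together with Lemma~\ref{lm:EK} to derive a differential inequality forcing $\|w(s)\|_{L^2(K_1)}$ into a bounded set exponentially fast, and runs an analogous weighted energy argument for $\tilde v$ (with separate treatment of $\e\le1$ and $\e>1$). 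Your extraction of $s_n$, the mass identity via $L^2(K_1)\hookrightarrow L^1(\R^2)$, and the final interpolation (including the $p=r=\infty$ case via Gagliardo--Nirenberg and \eqref{eq:grad u}-\eqref{eq:delta v}) all match the paper and are fine.
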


\begin{theorem}[Long time behavior II]\label{th:longtime-alpha=0}
Assume $\e>0$, $\alpha=0$, $u_0\in L^1(\R^2)$, $v_0\in \dot{H}^1(\R^2)$,
$u_0\ge0$, $v_0\ge0$. Let $(u,v)$ be a non-negative global solution of \eqref{KSu}-\eqref{KSv} such that $u\in L^\infty((0,\infty),L^1(\R^2))$, $v\in L^\infty((0,\infty),\dot{H}^1(\R^2))$ and satisfying estimates \eqref{reg_effetct1}-\eqref{reg_effetct2} and \eqref{eq:grad u}-\eqref{eq:delta v}. Then, if $M=\int_{\R^2}u_0<\widetilde M(\e)$, where $\widetilde M(\e)$ is defined in \eqref{eq:tilde M(e)}, \eqref{lim_uMvM} holds true for any $p\in[1,\infty]$ and $r\in[2,\infty]$.
\end{theorem}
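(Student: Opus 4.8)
The plan is to bootstrap Theorem \ref{th:longtime-alpha=0} from Proposition \ref{prop:longtime-alpha=0}, which already furnishes convergence to a self-similar solution along some sequence $t_n\to\infty$, upgrading it to the full limit \eqref{lim_uMvM} while simultaneously relaxing the well-localized hypotheses $u_0\in L^2(K_1)$, $v_0\in H^1(K_\tau)$ down to $u_0\in L^1(\R^2)$, $v_0\in\dot H^1(\R^2)$. Two structural facts drive the argument: the mass restriction $M<\widetilde M(\e)$ singles out, by Theorem \ref{th:uniqueness_self-similar}, a \emph{unique} self-similar solution of mass $M$ (and since $\widetilde M(\e)\le 8\pi\le M^*(\e)$, Proposition \ref{prop:longtime-alpha=0} is applicable); and the continuous-dependence estimate \eqref{in:continuity} holds with a constant \emph{independent of $t$}.

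The first step is a density reduction. Given $(u_0,v_0)$ as in the statement with $M=\int_{\R^2}u_0<\widetilde M(\e)$, I would choose non-negative approximations with $u_0^k\in L^2(K_1)$, $v_0^k\in H^1(K_\tau)$, $u_0^k\to u_0$ in $L^1(\R^2)$ and $\nabla v_0^k\to\nabla v_0$ in $L^2(\R^2)$, retaining the smallness needed for global existence; then $M_k:=\int_{\R^2}u_0^k\to M$, so $M_k<\widetilde M(\e)$ for $k$ large. Let $(u^k,v^k)$ be the corresponding global solutions and $(u_{M_k},v_{M_k})$, $(u_M,v_M)$ the unique self-similar solutions of masses $M_k$, $M$. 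Applying \eqref{in:continuity} to the pair $(u,v)$, $(u^k,v^k)$ — legitimate because its proof uses only the decay rates \eqref{reg_effetct1}-\eqref{reg_effetct2} assumed here — yields, uniformly in $t>0$,
\[
t^{1-\f1p}\|u(t)-u^k(t)\|_{L^p(\R^2)}+t^{\f12-\f1r}\|\nabla v(t)-\nabla v^k(t)\|_{L^r(\R^2)}\le C\bigl(\|u_0-u_0^k\|_{L^1(\R^2)}+\|\nabla v_0-\nabla v_0^k\|_{L^2(\R^2)}\bigr),
\]
which tends to $0$ as $k\to\infty$. On the self-similar side the scaling identities
\[
t^{1-\f1p}\|u_{M_k}(t)-u_M(t)\|_{L^p(\R^2)}=\|U_{M_k}-U_M\|_{L^p(\R^2)},\qquad t^{\f12-\f1r}\|\nabla v_{M_k}(t)-\nabla v_M(t)\|_{L^r(\R^2)}=\|\nabla V_{M_k}-\nabla V_M\|_{L^r(\R^2)}
\]
are $t$-independent and vanish as $k\to\infty$ by the continuity in $M$ of Proposition \ref{pr:continuity} (for finite $p,r$; endpoints are recovered below). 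A triangle inequality then reduces the theorem to proving the full limit \eqref{lim_uMvM} for each fixed \emph{localized} datum, since the two outer terms are uniformly small in $t$ once $k$ is large.

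For a localized datum (index $k$ dropped) I would pass to the rescaled unknowns $(\tilde u,\tilde v)$ of \eqref{eq:tilde_uv}, which solve \eqref{KS_tilde_u}-\eqref{KS_tilde_v}. The uniform bounds \eqref{est:suptildeu}-\eqref{eq:bo1} together with the compactness of the embedding $H^1(K_\theta)\subset L^2(K_\theta)$ make the trajectory $\{(\tilde u(s),\nabla\tilde v(s)):s\ge s_0\}$ relatively compact in the weighted $L^2$ topology, so its $\omega$-limit set $\Omega$ is non-empty, compact, connected and invariant under the semiflow \eqref{KS_tilde_u}-\eqref{KS_tilde_v}. The rescaled free energy attached to \eqref{eq:fe} is a strict Lyapunov functional for this flow, whence the same dissipation computation underlying Proposition \ref{prop:longtime-alpha=0} shows that \emph{every} element of $\Omega$ is a stationary solution of \eqref{KSuSS}-\eqref{KSvSS}; mass being conserved by the rescaled flow, each has mass $M$. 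Since $M<\widetilde M(\e)$, Theorem \ref{th:uniqueness_self-similar} forces $\Omega=\{(U_M,\nabla V_M)\}$ to be a single point, giving $\tilde u(s)\to U_M$ and $\nabla\tilde v(s)\to\nabla V_M$ as $s\to\infty$ in $L^2(K_\theta)$, hence in $L^p(\R^2)$, $L^r(\R^2)$ for all finite $p,r$ by interpolation against \eqref{est:suptildeu}-\eqref{eq:bo1}; the endpoints $p=r=\infty$ follow from a Gagliardo--Nirenberg interpolation bringing in the gradient decay \eqref{eq:grad u}-\eqref{eq:delta v}. Undoing \eqref{eq:tilde_uv} turns this into \eqref{lim_uMvM} for localized data and closes the reduction.

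The main obstacle is exactly the collapse of the $\omega$-limit set to a single steady state, i.e. the passage from the subsequential statement of Proposition \ref{prop:longtime-alpha=0} to the full limit. It rests on three ingredients, of which the first is the genuinely technical one: (a) relative compactness of the rescaled trajectory in a topology strong enough to identify limits, which demands uniform-in-time control of the \emph{weighted} norms $\|\tilde u(s)\|_{L^2(K_1)}$ and $\|\nabla\tilde v(s)\|_{H^1(K_\tau)}$ — these do not follow from the Lebesgue decay estimates alone and must be produced from the confining drift $\f\xi2\cdot\nabla$ together with the entropy/energy structure, via Lemma \ref{lm:EK}; (b) the LaSalle-type identification of $\omega$-limit points with steady states through the monotone rescaled free energy; and (c) the uniqueness Theorem \ref{th:uniqueness_self-similar}, which is precisely what fails once $M\ge\widetilde M(\e)$ and is the sole reason the threshold appears. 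By contrast, the density-plus-continuous-dependence reduction is essentially soft, hinging only on the $t$-uniform constant in \eqref{in:continuity} and the continuity in $M$ of Proposition \ref{pr:continuity}.
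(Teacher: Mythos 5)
Your proposal is correct and follows essentially the same route as the paper: approximate $(u_0,v_0)$ by well-localized non-negative data, use the $t$-uniform continuous-dependence estimate \eqref{in:continuity} and the continuity in $M$ from Proposition \ref{pr:continuity} to control the outer terms of the triangle inequality, and for each localized datum collapse the $\omega$-limit set (non-empty by the compactness of Theorem \ref{S4Thm6}, contained in the equilibria by the LaSalle argument of Proposition \ref{prop:longtime-alpha=0}) to the single point $(U_{M},V_{M})$ via the uniqueness Theorem \ref{th:uniqueness_self-similar}, finishing with interpolation and Gagliardo--Nirenberg for the endpoint exponents. The scaling identities you record for the self-similar terms and the observation that $\widetilde M(\e)\le 8\pi\le M^*(\e)$ are both accurate, so no gap remains.
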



\begin{remark}\label{S4Rem2} 
As we said in the Introduction, the long time behavior of the solutions of \eqref{KSu}-\eqref{KSv} with $\e=1$, has already been studied in \cite{Ferreira}. The following is a simple consequence of that result.  Let $\e=1$ and consider $(u, v)$ and  $(u^*, v^*)$  two global integral solutions. Suppose that $(u^*, v^*)$ is a self-similar solution. Then the asymptotic behaviour
\[
\lim _{ t\to \infty }t^{\left(1-\frac {1} {p}\right)}\|u(t)-u^*(t)\|_{ L^p(\R^2) }=\lim_{ t\to \infty }t^{\frac {1} {2}}\|\nabla v(t)-\nabla v^*(t)\|_{L^\infty(\R^2)}=0
\]
holds for $p\in[r,q]$ and some fixed $r\in(1,2)$, $q\in(2,\infty)$, if and only if
\beq\label{Ferreira10}
\lim _{ t\to \infty }t^{\left(1-\f1p\right)}\|G(t)*(u(0)-u^*(0))\|_{L^p(\R^2)}=
\lim_{t\to \infty }t^{\f12}\|G(t)*(\nabla v(0)-\nabla v^*(0))\|_{L^\infty(\R^2)}=0
\eeq
holds for the same $p\in[r,q]$. 
Condition \eqref{Ferreira10} is fulfilled if, for example $u(0)\ge 0$ is integrable with integral equal to $M$ 
and $u^*(0)=M\delta_0$, 
$v(0)\in\dot{H}^1(\R^2)$ and $v^*(0)=0$
(argue first with $v(0)\in \dot{H}^1(\R^2)\cap W^{1,1}(\R^2)$ and then by density in $\dot{H}^1(\R^2)$).
That is the same initial datum than in our Theorem \ref{th:longtime-alpha=0}.
However, in \cite{Ferreira}, the existence of self-similar solutions of \eqref{KSu}-\eqref{KSv} is proved 
under a smallness condition on both $u_0$ and $v_0$.
\end{remark}

The first step to prove Proposition \ref{prop:longtime-alpha=0} and Theorem \ref{th:longtime-alpha=0} is to show the uniform boundedness of $\|\tilde u(s)\|_{H^1(K_1)}$ and $\|\tilde v(s)\|_{H^2(K_\tau)}$, when the initial data $u_0$ and $v_0$ are taken in $L^2(K_1)$ and $H^1(K_\tau)$ respectively. The compactness of the embedding   $H^1(K_\theta)$ in $L^2(K_\theta)$ shall ensure the relatively compactness of the trajectory. The positivity of $u_0$ and $v_0$ is not required here.
\begin{theorem}\label{S4Thm6}
Assume $\e>0$, $\alpha=0$, $u_0\in L^2(K_1)$, $v_0\in H^1(K_\tau)$. Let $(u,v)$ be a non-negative global solution of \eqref{KSu}-\eqref{KSv} such that $u\in L^\infty((0,\infty),L^1(\R^2))$, $v\in L^\infty((0,\infty),\dot{H}^1(\R^2))$ and satisfying estimates \eqref{reg_effetct1}-\eqref{reg_effetct2} and \eqref{eq:grad u}-\eqref{eq:delta v}. Let $(\tilde u,\tilde v)$ be the corresponding integral solution of \eqref{KS_tilde_u}-\eqref{KS_tilde_v}. Then, $(\tilde u,\tilde v)\in L^\infty([2,\infty); H^1(K_1)\times H^2(K_\tau))$ and the trajectory $(\tilde u(s),\tilde v(s))_{s\ge 2}$ is relatively compact in $L^2(K_1)\times H^1(K_\tau)$. 
\end{theorem}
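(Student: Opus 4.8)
The plan is to carry the whole analysis into the self-similar variables and to establish the two weighted Sobolev bounds by energy estimates, feeding in the already proven unweighted decay rates \eqref{reg_effetct1}--\eqref{reg_effetct2} and \eqref{eq:grad u}--\eqref{eq:delta v} (equivalently \eqref{est:suptildeu}--\eqref{eq:bo1}) as external data and using Lemma~\ref{lm:EK} as the main analytic tool. Since the constants in those bounds degenerate as the time origin tends to zero, I would perform all estimates for $s\ge 2$, shifting the origin by a fixed amount so that $\tilde u$ is already uniformly bounded in every $L^p(\R^2)$ and $\nabla\tilde v,\Delta\tilde v$ in every $L^r(\R^2)$; the assumption $u_0\in L^2(K_1)$, $v_0\in H^1(K_\tau)$ guarantees that the weighted norms are finite at the starting time.

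First I would bound $\tilde u$ in $L^2(K_1)$. Multiplying \eqref{KS_tilde_u} by $\tilde u\,K_1$ and integrating, the operator part $-(L_1-I)\tilde u$ yields the dissipation $-\|\nabla\tilde u\|_{L^2(K_1)}^2$ together with the unfavorable term $+\|\tilde u\|_{L^2(K_1)}^2$, while the chemotactic term produces, after one integration by parts, $\int \tilde u\,K_1\,\nabla\tilde v\cdot\nabla\tilde u$ and the moment term $\int \tilde u^2\,\nabla\tilde v\cdot\tfrac{\xi}{2}\,K_1$. The factor $\|\nabla\tilde v\|_{L^\infty(\R^2)}$ is controlled by \eqref{eq:bo1}, and the weighted moment $\int \tilde u^2|\xi|^2K_1$ coming from the drift is absorbed into $\|\nabla\tilde u\|_{L^2(K_1)}^2$ via the first inequality \eqref{eq:moment dans L2} of Lemma~\ref{lm:EK}. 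The decisive point is that the non-dissipative term $+\|\tilde u\|_{L^2(K_1)}^2$ cannot be defeated by \eqref{eq:moment dans L2} alone, since it is saturated by the Gaussian (the conserved mass mode); to handle it I would use the interpolation inequality of Lemma~\ref{lm:EK}, which controls $\|\tilde u\|_{L^2(K_1)}^2$ by $\delta\,\|\nabla\tilde u\|_{L^2(K_1)}^2+C(\delta)\,\|\tilde u\|_{L^q(B(0,R))}^2$, the last factor being uniformly bounded by \eqref{est:suptildeu}. Choosing $\delta$ small reduces everything to a differential inequality of the form $\tfrac{d}{ds}\|\tilde u\|_{L^2(K_1)}^2\le -c\,\|\tilde u\|_{L^2(K_1)}^2+C$, hence to a uniform bound for $s\ge 2$ by Gronwall.

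Since $K_\tau\le K_1$, this makes the source $\tilde u$ uniformly bounded in $L^2(K_\tau)$, and I would then test \eqref{KS_tilde_v} against $\tilde v\,K_\tau$: the operator part gives $-\|\nabla\tilde v\|_{L^2(K_\tau)}^2$, which by \eqref{eq:moment dans L2} dominates $\tfrac{\tau}{2}\|\tilde v\|_{L^2(K_\tau)}^2$, while the coupling $\int \tilde u\,\tilde v\,K_\tau$ is closed by Young against the bounded source. When $\e\le 1$ the weight $K_\tau=K_\e$ is exactly the one making $L_\e$ self-adjoint; when $\e>1$ the diffusion and drift cross-terms combine with a sign $(\e-1)/2>0$ that only adds dissipation, so the conclusion persists (this is the content of \eqref{ineq:Se}, which may be used instead). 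Repeating the scheme at the level of first and second derivatives, using now the uniform control of $\tilde u$ in $L^2(K_1)$ and of $\nabla\tilde v,\Delta\tilde v$ in $L^\infty(\R^2)$, gives $\tilde v\in L^\infty([2,\infty);H^2(K_\tau))$.

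To reach $H^1(K_1)$ for $\tilde u$ I would run the same energy argument one derivative higher, that is, test \eqref{KS_tilde_u} against $L_1\tilde u\,K_1$: this produces the dissipation $-\|L_1\tilde u\|_{L^2(K_1)}^2$, the nonlinear contribution is estimated through $\|\nabla\tilde v\|_{L^\infty(\R^2)}\|\nabla\tilde u\|_{L^2(K_1)}+\|\Delta\tilde v\|_{L^\infty(\R^2)}\|\tilde u\|_{L^2(K_1)}$ (both factors already bounded by \eqref{eq:bo1} and the previous step), and the non-coercive term is again killed by the interpolation inequality of Lemma~\ref{lm:EK} together with the uniform bound on $\|\nabla\tilde u\|_{L^q(\R^2)}$ from \eqref{eq:grad u}, weighted elliptic regularity relating $\|L_1\tilde u\|_{L^2(K_1)}$ to the full $H^2(K_1)$ norm; alternatively the gain of one derivative follows from the smoothing estimate \eqref{ineq-S*} applied to \eqref{eq:Duhamel_tilde_u} over a time interval of fixed length one, on which the singular factor $(1+\sigma^{-1/2})$ is integrable. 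Once $\tilde u\in L^\infty([2,\infty);H^1(K_1))$ and $\tilde v\in L^\infty([2,\infty);H^2(K_\tau))$ are established, the relative compactness of the trajectory in $L^2(K_1)\times H^1(K_\tau)$ follows at once from the compactness of the embeddings $H^1(K_1)\hookrightarrow L^2(K_1)$ and $H^2(K_\tau)\hookrightarrow H^1(K_\tau)$. The main obstacle throughout is exactly the lack of coercivity of $L_1-I$ on the mass (Gaussian) mode: the whole scheme hinges on repeatedly trading this non-decaying mode for the uniform unweighted $L^p$ bounds via the interpolation inequality of Lemma~\ref{lm:EK}, and on verifying that, even for $\e>1$, the weight mismatch in the $\tilde v$ equation only improves the dissipation.
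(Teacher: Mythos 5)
Your proposal is correct and reaches the conclusion along the same overall skeleton as the paper (weighted energy estimates fed by the unweighted decay rates \eqref{est:suptildeu}--\eqref{eq:bo1}, the semigroup smoothing \eqref{ineq-S*}/\eqref{ineq:Se} through the Duhamel formulas \eqref{eq:Duhamel_tilde_u}--\eqref{eq:Duhamel_tilde_v} to gain derivatives on fixed-length time windows, and the compact embedding $H^1(K_\theta)\subset L^2(K_\theta)$ at the end). The one genuinely different choice is how you defeat the non-coercive term $+\|\tilde u\|_{L^2(K_1)}^2$ produced by $L_1-I$. The paper decomposes $\tilde u=M\varphi+w$ with $\varphi=c\,K_1^{-1}$ spanning the ground state of $L_1$ and $w\in E^\perp$, so that the spectral gap $\|\nabla w\|_{L^2(K_1)}^2\ge\lambda_2\|w\|_{L^2(K_1)}^2=\f32\|w\|_{L^2(K_1)}^2$ supplies the missing margin and in fact yields exponential decay of the fluctuation $w$. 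You instead keep $\tilde u$ whole and invoke the interpolation inequality of Lemma~\ref{lm:EK} with $\delta$ small, so that the dissipation returns $\delta^{-1}\|\tilde u\|_{L^2(K_1)}^2-C(\delta,q)\|\tilde u\|_{L^q(B(0,R))}^2$ with $\delta^{-1}$ as large as needed; since the localized $L^q$ norm is uniformly bounded by \eqref{est:suptildeu}, this closes to $\f{d}{ds}\|\tilde u\|_{L^2(K_1)}^2\le -c\|\tilde u\|_{L^2(K_1)}^2+C$ once $\delta$ is chosen against the (uniform for $s\ge s_0$) coefficients $\|\nabla\tilde v\|_{L^\infty}$, $\|\Delta\tilde v\|_{L^\infty}$; I checked the constants and the absorption does work, including your moment term $\int\tilde u^2\,\nabla\tilde v\cdot\f\xi2\,K_1$ via \eqref{eq:moment dans L2}. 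Your route is more elementary (no eigenfunction decomposition) at the price of leaning harder on the quantitative form of Lemma~\ref{lm:EK}; the paper's route isolates the conserved mass mode explicitly and gives the sharper information that $\tilde u-M\varphi$ decays exponentially in $L^2(K_1)$. Two small points of rigor you should make explicit: the energy identities require knowing beforehand that $\tilde u(s)\in H^1(K_1)$ and $\tilde v(s)\in H^2(K_\tau)$ for $s>0$ (this is exactly what the Duhamel-plus-Gronwall step on a finite window provides, and you do mention it, but it must come first, not as an ``alternative''); and for $\tilde u\in L^\infty([2,\infty);H^1(K_1))$ the Duhamel route is the safe one, since testing against $L_1\tilde u\,K_1$ would force you to develop weighted elliptic regularity and commutator estimates that the paper avoids.
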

\begin{proof}
{\sl First step : $\tilde u(s)\in H^1(K_1)$, for $s>0$.} Indeed, using \eqref{eq:Duhamel_tilde_u} and \eqref{ineq-S*} for $s\ge0$ and $s_0>0$, we have
\beqan
\|\tilde u(s+s_0)\|_{H^1(K_1)} 
&\le& \|S(s)\tilde u(s_0)\|_{H^1(K_1)}+\int_0^s \|S(s-\sigma)(\nabla\cdot (\tilde u\nabla\tilde v(\sigma+s_0)))\|_{H^1(K_1)}d\sigma \\
&\le& \|S(s+s_0)u_0\|_{H^1(K_1)}+C\int_0^s (1+(s-\sigma)^{-1/2})\|\nabla\tilde u(\sigma+s_0)\cdot \nabla\tilde v(\sigma+s_0)\|_{L^2(K_1)}\,d\sigma \\
& & +C\int_0^s (1+(s-\sigma)^{-1/2})\|\tilde u(\sigma+s_0)\Delta\tilde v(\sigma+s_0)\|_{L^2(K_1)}\,d\sigma \\
&\le& C(1+(s+s_0)^{-1/2})\|u_0\|_{L^2(K_1)}\\
& & +C\sup_{0\le \sigma\le s}\|\nabla\tilde v(\sigma+s_0)\|_{L^{\infty}(\R^2)}\int_0^s (1+(s-\sigma)^{-1/2})\|\tilde u(\sigma+s_0)\|_{H^1(K_1)}\,d\sigma \\
& & +C\sup_{0\le \sigma\le s}\|\Delta\tilde v(\sigma+s_0)\|_{L^{\infty}(\R^2)}\int_0^s (1+(s-\sigma)^{-1/2})\|\tilde u(\sigma+s_0)\|_{H^1(K_1)}\,d\sigma\,.
\eeqan
Then, from estimates \eqref{eq:bo1} it follows that
\[
\|\tilde u(s+s_0)\|_{H^1(K_1)} \le C(1+(s+s_0)^{-1/2})\|u_0\|_{L^2(K_1)}+
C(s_0)\int_0^s (1+(s-\sigma)^{-1/2})\|\tilde u(\sigma+s_0)\|_{H^1(K_1)}\,d\sigma.
\]
Applying Gronwall's lemma \ref{lem:Gronwall}, for any $T>0$ there exists $C=C(T,s_0,\|u_0\|_{L^2(K_1)})>0$ such that 
\beq
\|\tilde u(s+s_0)\|_{H^1(K_1)} \le C\,(s+s_0)^{-1/2},\qquad s\in [0,T]\,,
\label{ineq:normH1K}
\eeq
i.e. $\tilde u(s)\in H^1(K_1)$, for $s>0$. 

{\sl Second step : $\tilde u\in L^\infty([s_0,\infty)\,;\,L^2(K_1))$, for any $s_0>0$.}
Let $\theta=1$ and $E$ be the eigenspace corresponding to the first eigenvalue, $\lambda_1=1$, of the operator $L_1$ defined in \eqref{eq:L}.  $E$ is spanned by $K_1^{-1}(\xi)$. Let $\varphi(\xi):=c\,K_1^{-1}(\xi)$, where $c:=(\int_{\R^2}K_1^{-1}(\xi)d\xi)^{-1}$ so that $\int_{\R^2}\varphi(\xi)\,d\xi=1$. Since $\tilde u(s)\in L^2(K_1)$, $\tilde u$ may be written~as 
\[
\tilde u(\xi,s)=M\,\varphi(\xi)+w(\xi,s)\,,\qquad s>0\,,
\]
where $M=\int_{\R^2}\tilde u(\xi,s)\,d\xi=\int_{\R^2}u_0(\xi)\,d\xi$ and $w(s)\in E^\bot$ (the orthogonal of $E$ in $L^2(K_1)$) for $s>0$. Moreover, owing to $\int_{\R^2}w(\xi,s)\,d\xi=0$ and $\|\varphi\|_{L^2(K_1)}^2=c$, it holds 
\beq
\|\tilde u(s)\|_{L^2(K_1)}^2=M^2\,c+\|w(s)\|_{L^2(K_1)}^2\,.
\label{ineq-term2}
\eeq
Therefore, it is enough to control the $L^2(K_1)$ norm of $w(s)$ uniformly in time in order to control the $L^2(K_1)$ norm of $\tilde u(s)$ uniformly in time. 

It is easily seen that $w$ satisfies
\[
w_s+L_1w-w=-\nabla\cdot(\tilde u\nabla\tilde v)\,.
\]
Multiplying the above equation by $w\,K_1=\tilde u\,K_1-M\,c$ and integrating over $\R^2$, we obtain
\beq
\begin{split}
\f12\|w(s)\|_{L^2(K_1)}^2&+\|\nabla w(s)\|_{L^2(K_1)}^2-\|w(s)\|_{L^2(K_1)}^2\\
&= -\int_{\R^2}(\nabla\tilde u(\xi,s)\cdot\nabla\tilde v(\xi,s)+\tilde u(\xi,s)\Delta\tilde v(\xi,s))\tilde u(\xi,s)\,K_1(\xi)\,d\xi\\
&\le \|\nabla\tilde v(s)\|_{L^\infty(\R^2)}\|\nabla\tilde u(s)\|_{L^2(K_1)}\|\tilde u(s)\|_{L^2(K_1)}+
\|\Delta\tilde v(s)\|_{L^\infty(\R^2)}\|\tilde u(s)\|_{L^2(K_1)}^2.
\label{est:w}
\end{split}
\eeq
Next, using identities \eqref{ineq-term2} and
\[
\|\nabla\tilde u(s)\|_{L^2(K_1)}^2=M^2\|\nabla\varphi(s)\|_{L^2(K_1)}^2+\|\nabla w(s)\|_{L^2(K_1)}^2=M^2\,C+\|\nabla w(s)\|_{L^2(K_1)}^2\,,
\]
and estimates \eqref{eq:bo1}, \eqref{est:w} becomes for $s\ge s_0>0$
\beq
\begin{split}
\f12\f{d}{ds}\|w(s)\|_{L^2(K_1)}^2&+\|\nabla w(s)\|_{L^2(K_1)}^2-\|w(s)\|_{L^2(K_1)}^2\le C(s_0)\left(M^2\,c+\|w(s)\|_{L^2(K_1)}^2\right)\\
&+ C(s_0)\left(M\,C+\|\nabla w(s)\|_{L^2(K_1)}\right)\left(M\,c^{\f12}+\|w(s)\|_{L^2(K_1)}\right)\,.
\end{split}
\label{ineq-w}
\eeq
As a consequence of \eqref{est:suptildeu}, we are allowed to apply Lemma \ref{lm:EK} to $w(s)$ for $s\ge s_0>0$ to obtain 
\beq
\|w(s)\|^2_{L^2(K_1)}\leq \delta\,\|\nabla w(s)\|^2_{L^2(K_1)}+C(\delta,s_0)\,,
\label{est:delta_w}
\eeq
for any arbitrary $\delta>0$. Using \eqref{est:delta_w} in the r.h.s. of \eqref{ineq-w} many times as needed, we get  
\beq
\f12\f{d}{ds}\|w(s)\|_{L^2(K_1)}^2+\|\nabla w(s)\|_{L^2(K_1)}^2-\|w(s)\|_{L^2(K_1)}^2\le C(\delta,M,s_0)+\delta\,\|\nabla w(s)\|^2_{L^2(K_1)}\,,\quad s\ge s_0>0\,.
\label{ineq-w 2}
\eeq
Finally, since $w(s)\in H^1(K_1)\cap E^\bot$ and that the second eigenvalue of $L_1$ is $\lambda_2=\f32$, we have
\beq
\|\nabla w(s)\|_{L^2(K_1)}^2\ge \f32\,\|w(s)\|^2_{L^2(K_1)}\,,\qquad s>0\,,
\label{ineqlambda2}
\eeq
Owing to \eqref{ineqlambda2},  \eqref{ineq-w 2} becomes
\[
\f12\frac{d}{ds}\|w(s)\|_{L^2(K_1)}^2+\f12(1-3\delta)\|w(s)\|_{L^2(K_1)}^2\le C(\delta,M,s_0), \quad s\ge s_0>0\,.
\]
Choosing $\delta<\f13$ and integrating the above inequality over $(s_0,s)$, we obtain 
\[
\|w(s)\|_{L^2(K_1)}^2\le 2\,C(\delta,M,s_0)\,{\rm e}^{-(1-3\delta)s_0}+\|w(s_0)\|_{L^2(K_1)}^2{\rm e}^{-(1-3\delta)(s-s_0)}, \quad s\ge s_0>0\,,
\]
and the second step is proved.

{\sl Third step : $\tilde u\in L^\infty([2,\infty)\,;\,H^1(K_1))$.} Proceeding as in the first step, from \eqref{eq:Duhamel_tilde_u} we obtain, for $\tau\ge s_0>0$ and $s\ge0$,
\[
\|\tilde u(s+\tau)\|_{H^1(K_1)}\leq C(1+s^{-1/2})\|\tilde u(\tau)\|_{L^2(K_1)}+C(s_0)\int_0^s(1+(s-\sigma)^{-1/2})\|\tilde u(\sigma+\tau)\|_{H^1(K_1)}.
\]
Applying Gronwall's lemma \ref{lem:Gronwall}, for any $T>0$ there exists a constant $C(T,s_0)>0$ such that  
\[
\|\tilde u(s+\tau)\|_{H^1(K_1)}\leq C(T,s_0)\,s^{-\f12}, \qquad s\in (0,T]\,.
\]
It is worth noticing that the constant $C(T,s_0)$ does not depend on $\tau$ owing to the uniform boundedness of $\|u(\tau)\|_{L^2(K_1)}$. Taking $T=2$ and $s=s_0=1$, we have
\[
\|\tilde u(1+\tau)\|_{H^1(K_1)}\leq C, \qquad \tau\ge1\,,
\]
and the proof of this step is complete.

{\sl Fourth step : $\tilde v\in L^\infty([1+s_0,\infty); H^2(K_\tau))$, for any $s_0>0$.}
We shall apply to $\tilde v$ some of the previous arguments. Thus, we shall skip some details. Using \eqref{eq:Duhamel_tilde_v}, \eqref{ineq:Se} and \eqref{ineq:normH1K}  we have for any $T>0$ and $s\in(0,T]$
\beqan
\|\tilde v(s)\|_{H^2(K_\tau)} &\le & \|S_{\e}(s)v_0\|_{H^2(K_\tau)}+\f1\e\int_0^s \|S_{\e}(s-\sigma)\tilde u(\sigma)\|_{H^2(K_\tau)}\,d\sigma \\
& \le &C\,(1+(s/\e)^{-1/2})\|v_0\|_{H^1(K_\tau)}+\e^{-1}\,C(T)\int_0^s(1+(\f{s-\sigma}\e)^{-1/2})\,\sigma^{-1/2}\,d\sigma<\infty\,,
\eeqan
i.e.  $\tilde v(s)\in H^2(K_\tau)$ for $s>0$.

Next, if $0<\e\le1$, multiplying by $\tilde v\, K_\e$ the equation satisfied by $\tilde v$, i.e. $\e\,\tilde v_s+L_\e\, \tilde v=\tilde u$, we obtain, after integration over $\R^2$,
\beq
\begin{split}
\f\e2\frac{d}{ds}\|\tilde v(s)\|_{L^2(K_\e)}^2+\|\nabla\tilde v(s)\|_{L^2(K_\e)}^2 
\le \delta \|\tilde v(s)\|_{L^2(K_\e)}^2+\f1{4\delta}\|\tilde u(s)\|_{L^2(K_1)}^2\,,
\end{split}
\label{ineq:dstilde-v}
\eeq
with $\delta>0$  to be chosen later. Since the first eigenvalue of $L_\e$ defined in \eqref{eq:L} is $\lambda_1=\e$, we have 
\beq
\|\nabla\tilde v(s)\|_{L^2(K_\e)}^2\geq\e\,\|\tilde v(s)\|_{L^2(K_\e)}^2\,,\quad s>0\,.
\label{est:leaste} 
\eeq
Therefore, using \eqref{est:leaste} and the uniform boundedness of $\|\tilde u(s)\|_{L^2(K_1)}^2$ for $s\ge s_0>0$ previously obtained, \eqref{ineq:dstilde-v} becomes
\[
\f\e2\frac{d}{ds}\|\tilde v(s)\|_{L^2(K_\e)}^2+(\e-\delta)\|\tilde v(s)\|_{L^2(K_\e)}^2 \le C(\delta,s_0)\,,\quad s\ge s_0>0\,.
\]
Integrating the above differential inequality (with $\delta<\e$) over $(s_0,s)$, we get that $\tilde v\in L^{\infty}([s_0,\infty),L^2(K_\e))$. If $\e>1$, multiplying \eqref{KS_tilde_v} by $\tilde v\, K_1$ and integrating over $\R^2$, we obtain 
\[
\begin{split}
\f\e2\frac{d}{ds}\|\tilde v(s)\|_{L^2(K_1)}^2+\|\nabla\tilde v(s)\|_{L^2(K_1)}^2 
&\le\f{\e-1}2\int_{\R^2}\nabla\tilde v^2(\xi,s)\cdot\nabla K_1(\xi)d\xi+ \delta \|\tilde v(s)\|_{L^2(K_1)}^2+\f1{4\delta}\|\tilde u(s)\|_{L^2(K_1)}^2\\
&\le\left(\delta-\f{\e-1}2\right)\|\tilde v(s)\|_{L^2(K_1)}^2+\f1{4\delta}\|\tilde u(s)\|_{L^2(K_1)}^2\,.
\end{split}
\]
Choosing $\delta<\f{\e-1}2$ and integrating over $(s_0,s)$, we get that $\tilde v\in L^{\infty}([s_0,\infty),L^2(K_1))$. 

Furthermore, proceeding as before for $\nu\ge s_0>0$ and $s\ge0$, we have
\[
\begin{split}
\|\tilde v(s+\nu)\|_{H^1(K_\tau)} &\le \|S_{\e}(s)\tilde v(\nu)\|_{H^1(K_\tau)}+\frac{1}{\e}\int_0^s \|S_{\e}(s-\sigma)\tilde u(\sigma+\nu)\|_{H^1(K_\tau)}\,d\sigma \\
& \le  C(1+(s/\e)^{-1/2})\|\tilde v(\nu)\|_{L^2(K_\tau)}+\f C\e\int_0^s (1+(\f{s-\sigma}\e)^{-1/2})\|\tilde u(\sigma+\nu)\|_{L^2(K_1)}\,d\sigma\,.
\end{split}
\]
Choosing $s=1$ and owing to the uniform boundedness of $\|\tilde v(\nu)\|_{L^2(K_\tau)}$ and $\|\tilde u(\sigma+\nu)\|_{L^2(K_1)}$, we have for all $\nu\ge s_0$
\[
\|\tilde v(1+\nu)\|_{H^1(K_\tau)}\le C(\e,s_0)\,.
\]
Repeating the same argument for the $H^2(K_\tau)$ norm, we arrive to $\tilde v\in L^\infty([1+s_0,\infty); H^2(K_\tau))$.
\end{proof}

\begin{remark}
\label{S4R1}
Notice that in  Theorem \ref{S4Thm6}, we ask the initial data $(u_0, v_0)$ to belong to $L^2(K_1)\times H^1(K_\tau )$, where $\tau=\min\{1,\e\}$,  instead of $L^2(K_1)\times H^1(K_1)$. The reason is that we do not know if, when $\varepsilon <1$ and $f\in L^2(K_1)$,  $S _{ \varepsilon }f\in L^2(K_1)$, and therefore, we can not consider the two functions $u(t)$ and $v(t)$ in the functional spaces with the same weight.
\end{remark}

With the above compactness result, we are finally able to prove Proposition \ref{prop:longtime-alpha=0} and Theorem \ref{th:longtime-alpha=0}.

\begin{proof} [Proof of Proposition \ref{prop:longtime-alpha=0}]
Let $(\tilde u,\tilde v)$ be the integral solution of \eqref{KS_tilde_u}-\eqref{KS_tilde_v} corresponding to $(u,v)$. 
Let 
\[
\label{omegalimitset}
\begin{split}
\omega(u_0,v_{0}):=\{(f,g)\in L^2(K_1)\times H^1(K_{\tau}):\ &\exists\ s_{n}\rightarrow \infty 
\mbox{ such that } \; \tilde u(s_{n})\rightarrow f \mbox{ in } L^2(K_1)\\ \text{and }
&|\nabla\tilde v(s_{n})|\rightarrow |\nabla g| \mbox{ in } L^2(K_{\tau})\ \text{as }\ n\to\infty\}\,.
\end{split}
\]
be the $\omega$-limit set of $(\tilde u,\tilde v)$. This set is non empty due to the relatively compactness of the trajectory $(\tilde u(s),\tilde v(s))_{s\ge 2}$ proved in Theorem \ref{S4Thm6}. Moreover, since the embedding $L^2(K_1)\subset L^1(\R^2)$ is continuous, for each $(f,g)\in\omega(u_0,v_{0})$, we have
\[
\int_{\R^2}f(\xi)\, d\xi=\int_{\R^2}\tilde u(\xi,s)\,d\xi=\int_{\R^2}u_0(\xi)\,d\xi=M\,, \quad\, s>0\,.
\label{eq:massconserved}
\]

Next, let us rewrite the Liapunov functional~\eqref{eq:fe} in the $(\tilde u,\tilde v)$ variables
\beq
\tilde{\mathcal E}(s)=-M\,s+ \int_{\R^2} (\tilde u\log\tilde u)(\xi,s)\,d\xi - \int_{\R^2}(\tilde u\,\tilde v)(\xi,s)\,d\xi + \f12\int_{\R^2} |\nabla\tilde v(\xi,s)|^2\,d\xi\,.
\label{eq:tildeE}
\eeq
It is worth noticing that \eqref{eq:tildeE} makes sense because $(\tilde u(s),\tilde v(s))\in H^1(K_1)\times H^2(K_\tau)$, for $s>0$, and the solution is non-negative. 
Furthermore, $\tilde{\mathcal E}(s)$ is strictly decreasing along the trajectories, since it satisfies 
\beq
\f d{ds}\tilde{\mathcal E}(s)=- \int_{\R^2}\tilde u(\xi,s)|\nabla(\log\tilde u-\tilde v)(\xi,s)|^2\,d\xi -\e \int_{\R^2}(\partial_s\tilde v-\f\xi2\cdot\nabla\tilde v)^2(\xi,s)\,d\xi
\label{eq:tildeE'}
\eeq
and the first integral term in the r.h.s. of \eqref{eq:tildeE'} can be identically zero iff  $\tilde u=C\,e^{\tilde v}$, which is not possible for integrability reasons. Then, applying the classical LaSalle invariance principle, we have that $\omega(u_0,v_{0})$ is contained in the set of the stationary solutions of 
\eqref{KS_tilde_u}-\eqref{KS_tilde_v}, or equivalently in the set of solutions of the elliptic system \eqref{KSuSS}-\eqref{KSvSS}. Consequently, there exists $t_n\to\infty$ and a self-similar solution $(u_M,v_M)$ s.t. (after undoing the change of variable)
\[
\lim_{t_n\rightarrow \infty}\|u(t_n)-u_M(t_n+1)\|_{L^1(\R^2)}=
\lim_{t_n\rightarrow \infty}\|\nabla v(t_n)-\nabla v_M(t_n+1)\|_{L^2(\R^2)}=0\,.
\]
Since by the Lebesgue's Dominated Convergence Theorem it holds
\[
\lim_{t_n\to\infty}\left\|u_M(t_n)-u_{M}(t_n+1)\right\|_{L^1(\R^2)}
=\lim_{t_n\to\infty}\|\nabla v_M(t_n)-\nabla v_M(t_n+1)\|_{L^2(\R^2)}=0\,,
\]
\eqref{eq:long time behavior I} is proved for $p=1$ and $r=2$.

Next, due to the uniform boundedness of $(\tilde u(s),|\nabla\tilde v(s)|)_{s\ge s_0>0}$ in $L^p(\R^2)\times L^r(\R^2)$, $p\in[1,\infty]$, $r\in[2,\infty]$  (see \eqref{est:suptildeu}-\eqref{eq:bo1}), the claim \eqref{eq:long time behavior I} for $p\in(1,\infty)$ and $r\in(2,\infty)$ follows by interpolation.

Finally, in the case $p=r=\infty$, \eqref{eq:long time behavior I} follows from the previous results. Indeed, the Gagliardo-Nirenberg inequality for $q\in(2,\infty)$ and $a=\f2q$ gives us
\[
\begin{split}
t\,\|u(t)-& u_M(t)\|_{L^\infty(\R^2)}\le C\,t\|u(t)-u_{M}(t)\|_{L^q(\R^2)}^{1-a}\|\nabla u(t)-\nabla u_M(t)\|_{L^q(\R^2)}^a\\
&\le C\left(t^{1-\f1q}\left\|u(t)-u_{M}(t)\right\|_{L^q(\R^2)}\right)^{1-a}
\left(t^{\frac{3}{2}-\frac{1}{q}}\|\nabla u(t)\|_{L^q(\R^2)}+t^{\frac{3}{2}-\frac{1}{q}}\|\nabla u_M(t)\|_{L^q(\R^2)}\right)^a
\end{split}
\]
and
\[
\begin{split}
t^{\f12}\|\nabla v(t)-&\nabla v_M(t)\|_{L^\infty(\R^2)} \le
C\,t^{\f12}\|\nabla v(t)-\nabla v_M(t)\|_{L^{q}(\R^2)}^{1-a}\|\Delta v(t)-\Delta v_M(t)\|_{L^{q}(\R^2)}^a\\
& \le C \left(t^{\f12-\f1q}\|\nabla v(t)-\nabla v_M(t)\|_{L^{q}(\R^2)}\right)^{1-a}
\left(t^{1-\frac{1}{q}}\|\Delta v(t)\|_{L^{q}(\R^2)}+t^{1-\frac{1}{q}}\|\Delta v_M(t)\|_{L^{q}(\R^2)}\right)^a\,.
\end{split}
\]
Moreover, by the definition \eqref{phi} of $\phi$, equation \eqref{phi2} and estimates \eqref{upperS}-\eqref{est:phi'infty}, we get for all $t>0$
\[
t^{\f32-\f1q}\|\nabla u_M(t)\|_{L^q(\R^2)}=\|\nabla U_M\|_{L^q(\R^2)}=C(q)\,\|y^{\f12}\phi''(y)\|_{L^q(0,\infty)}=C'(q)\,.
\]
Similarly, by the definition \eqref{psi} of $\psi$, equation \eqref{S} and estimates \eqref{upperS}-\eqref{est:phi'infty}, we have
\[
t^{1-\frac{1}{q}}\|\Delta v_M(t)\|_{L^{q}(\R^2)}=\|\Delta V_M\|_{L^{q}(\R^2)}=C(q)\|V''_M(\sqrt y)+y^{-\f12}V'_M(\sqrt y)\|_{L^q(0,\infty)}
=C'(q)\,.
\]
Then, using the above computations and estimates \eqref{eq:grad u}-\eqref{eq:delta v}, we obtain the proof.
\end{proof}

\begin{proof} [Proof of Theorem \ref{th:longtime-alpha=0}]
Let $u_{0,n}\in L^2(K_1)$ and $v_{0,n}\in H^1(K_\tau)$ be non-negative sequences such that  
\[
u_{0,n}\rightarrow u_0 \;\; \mbox{ in } \;\; L^1(\R^2) \;\; \mbox{ and } \;\;
|\nabla v_{0,n}|\rightarrow |\nabla v_0|\;\; \mbox{ in } \;\; L^2(\R^2)
\;\; \mbox{ as } \;\; n\rightarrow \infty.
\]
Then,
\[\label{eq:MntoM}
M_n:=\int_{\R^2}u_{0,n}(x)\,dx\to M\,,\qquad\text{as } n\to\infty\,.
\]
Let $(u_n,v_n)$ be the non-negative global solution of \eqref{KSu}-\eqref{KSv} with initial data $(u_{0,n},v_{0,n})$ given by Theorem~\ref{th:existence critical} and $(\tilde u_n,\tilde v_n)$ the corresponding integral solution of \eqref{KS_tilde_u}-\eqref{KS_tilde_v}. Let $n$ be sufficiently large, so that $M_n<\widetilde M(\e)$. By the inclusion of $\omega(u_{0,n},v_{0,n})$ in the set of the equilibrium states proved in Proposition \ref{prop:longtime-alpha=0} and due to the uniqueness of the equilibrium given by Theorem~\ref{th:uniqueness_self-similar}, we have : $\omega(u_{0,n},v_{0,n})=\{(U_{M_n},V_{M_n})\}$, where $(U_{M_n},U_{M_n})$ is the unique solution of \eqref{KSuSS}-\eqref{KSvSS} corresponding to~$M_n$. Hence, for any fixed $n$ (sufficiently large), 
\[
\tilde u_n(s)\to U_{M_n}\quad\text{in } L^2(K_1)\quad\text{and}\quad
|\nabla\tilde v_n(s)|\to|\nabla V_{M_n}|\quad\text{in }L^2(K_\tau)\,,\quad\text{as }s\to\infty\,.
\]
The limits above hold true also in $L^1(\R^2)$ and $L^2(\R^2)$ respectively. Moreover, owing to the uniform boundedness of $(\tilde u_n(s),|\nabla\tilde v_n(s)|)_{s\ge s_0>0}$ in $L^p(\R^2)\times L^r(\R^2)$, $p\in[1,\infty]$, $r\in[2,\infty]$  (see \eqref{est:suptildeu}-\eqref{eq:bo1}) and the boundedness of $(U_{M_n},|\nabla V_{M_n}|)$ (see \eqref{eq:UMn bound}-\eqref{eq:gradVMn bound}), for $n$ fixed, we easily obtain by interpolation that
\beq
\tilde u_n(s)\to U_{M_n}\quad\text{in } L^p(\R^2)\quad\text{and}\quad
|\nabla\tilde v_n(s)|\to|\nabla V_{M_n}|\quad\text{in }L^r(\R^2)\,,\quad\text{as }s\to\infty\,,
\label{eq:conv}
\eeq
for every $p\in [1,\infty)$, $r\in [2,\infty)$ and any fixed $n$ (sufficiently large).

On the other hand, let $(u_M,v_M)$ be the unique self-similar solution corresponding to $M$ according to Theorem \ref{th:uniqueness_self-similar}, with profile $(U_M,V_M)$. Then, 
\beq\label{ineq:u-uM}
t^{1-\f1p}\left\|u(t)-u_{M}(t)\right\|_{L^p(\R^2)}\le t^{1-\f1p}\|u(t)-u_n(t)\|_{L^p(\R^2)}+t^{1-\f1p}\left\|u_n(t)-u_{M_n}(t)\right\|_{L^p(\R^2)}
+\|U_{M_n}-U_{M}\|_{L^p(\R^2)}
\eeq
and 
\beq\label{ineq:v vM}
\begin{split}
t^{\f12-\f1r}\|\nabla v(t)-\nabla v_M(t)\|_{L^r(\R^2)}\le t^{\f12-\f1r}\|\nabla v(t)-\nabla v_n(t)\|_{L^r(\R^2)}&+
t^{\f12-\f1r}\|\nabla v_n(t)-\nabla v_{M_n}(t)\|_{L^r(\R^2)}\\
&+\|\nabla V_{M_n}-\nabla V_M\|_{L^r(\R^2)}\,.
\end{split}
\eeq
From the continuity property \eqref{in:continuity}, the first terms in the r.h.s. of \eqref{ineq:u-uM} and \eqref{ineq:v vM} tend to $0$ as $n\to\infty$. 
From \eqref{eq:conv}, undoing the change of variables and proceeding as in Proposition \ref{prop:longtime-alpha=0}, the second terms in the r.h.s. of \eqref{ineq:u-uM} and \eqref{ineq:v vM} tend to $0$ as $t\to\infty$. Furthermore, from Proposition \ref{pr:continuity} the third terms in the r.h.s. of \eqref{ineq:u-uM} and \eqref{ineq:v vM} tend to 0 as $n\to\infty$. Therefore, \eqref{lim_uMvM} is proved for any $p\in[1,\infty)$ and $r\in[2,\infty)$.

Finally, for $p=r=\infty$, \eqref{lim_uMvM} follows from the Gagliardo-Nirenberg inequality as in Proposition~\ref{prop:longtime-alpha=0}.
\end{proof}
\section{Long time behavior : the case $\alpha>0$}
\label{sec:alphapos}
In the case $\alpha>0$, system \eqref{KSu}-\eqref{KSv} is no more invariant under the space-time scaling \eqref{eq:scaling}. Therefore, self-similar solutions do not exist. However, in that case one can take advantage of the degradation term in equation \eqref{KSv}, giving an improved time decay of $|\nabla v|$, to prove that global solutions $(u,v)$ of \eqref{KSu}-\eqref{KSv} behave, in the first component, as the heat kernel $G(t)$ as~$t\to\infty$. In other words, the non-local chemotactic term in equation \eqref{KSu} is too week and system \eqref{KSu}-\eqref{KSv} is weakly non-linear. 

\begin{theorem}\label{th:asymptbeh-alphapos}
Assume $\e>0$, $\alpha>0$, $u_0\in L^1(\R^2)$, $|\nabla v_0|\in L^2(\R^2)$ and $M=\int_{\R^2}u_0(x)\,dx$.  Let $(u,v)$ be a global solution of \eqref{KSu}-\eqref{KSv} such that $u\in L^\infty((0,\infty),L^1(\R^2))$, $v\in L^\infty((0,\infty),\dot{H}^1(\R^2))$ and satisfying estimates \eqref{reg_effetct1}-\eqref{reg_effetct2}.
Then, the solution $(u,v)$ verifies 
\begin{enumerate}
\item for all $p\in [1,\infty]$, 
\beq
t^{1-\f 1p}\|u(t)-MG(t)\|_{L^p(\R^2)}\longrightarrow 0 \quad \mbox{ as }
\quad t\rightarrow \infty\,;
\label{lim-uMG}
\eeq
\item for all $r\in [2,\infty)$ and $q\in \,(\f{2r}{r+2},\infty)$ or $r=\infty$ and $q>2$,
there exists $C=C(r,q,\e,\alpha)$ such that 
\beq
t^{\f12-\f 1{r}}\norm{\nabla v(t)}_{L^r(\R^2)}\le C\,t^{-(\frac{1}{r}-\frac{1}{q}+\frac{1}{2})}\,,\quad t>0\,.
\label{lim-nabla-v}
\eeq
\end{enumerate}
\end{theorem}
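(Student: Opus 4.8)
The plan is to prove the improved decay~\eqref{lim-nabla-v} of $\nabla v$ first, since it is precisely the surplus that the degradation term $\alpha>0$ buys, and then to insert it into the Duhamel formula~\eqref{eq:Duhamel_u} for $u$ in order to establish~\eqref{lim-uMG}. For the latter I would use the splitting
\[
u(t)-MG(t)=\big[G(t)*u_0-MG(t)\big]+N(t)\,,\qquad N(t):=-\sum_i\int_0^t\partial_iG(t-s)*\big(u(s)\,\partial_iv(s)\big)\,ds\,,
\]
and treat the linear and the nonlinear part separately.

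For~\eqref{lim-nabla-v} I would differentiate~\eqref{eq:Duhamel_v}. The contribution of $v_0$ carries the factor $e^{-(\alpha/\e)t}$, hence decays exponentially, faster than any power of $t$. For the Duhamel integral I would combine the regularizing bound $\norm{\partial_iG(\sigma)*f}_{L^r(\R^2)}\le C\,\sigma^{-\f12-(\f1q-\f1r)}\norm{f}_{L^q(\R^2)}$ with the decay $\norm{u(s)}_{L^q(\R^2)}\le C\,s^{-(1-\f1q)}$ from~\eqref{reg_effetct1}, obtaining
\[
\norm{\nabla v(t)}_{L^r(\R^2)}\le C\,e^{-(\alpha/\e)t}+C\int_0^t e^{-(\alpha/\e)(t-s)}(t-s)^{-\f12-(\f1q-\f1r)}\,s^{-(1-\f1q)}\,ds\,.
\]
Splitting at $s=t/2$, on $(0,t/2)$ the weight is $\le e^{-(\alpha/\e)t/2}$ and that piece is exponentially small, while on $(t/2,t)$ one bounds $s^{-(1-\f1q)}\le C\,t^{-(1-\f1q)}$ and is left, after the substitution $\sigma=t-s$, with the convergent integral $\int_0^\infty e^{-(\alpha/\e)\sigma}\sigma^{-\f12-(\f1q-\f1r)}\,d\sigma$; this converges exactly when $\f12+\f1q-\f1r<1$, i.e. $q>\f{2r}{r+2}$ (resp. $q>2$ when $r=\infty$). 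This yields $\norm{\nabla v(t)}_{L^r(\R^2)}\le C\,t^{-(1-\f1q)}$, which is~\eqref{lim-nabla-v}.

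For the linear part of~\eqref{lim-uMG} I would invoke the classical fact that, for $u_0\in L^1(\R^2)$ with $\int_{\R^2}u_0=M$, one has $t^{1-\f1p}\norm{G(t)*u_0-MG(t)}_{L^p(\R^2)}\to0$; this follows by approximating $u_0$ in $L^1(\R^2)$ by compactly supported data and exploiting the cancellation $\int_{\R^2}(u_0-M\delta_0)=0$, exactly as in the heat-kernel asymptotics of~\cite{EK87b,EZ91a}. The heart of the matter is $N(t)$, which a scaling count shows to be exactly critical: with the self-similar rate $\norm{\nabla v(s)}_{L^b(\R^2)}\sim s^{-(\f12-\f1b)}$ one only recovers $\norm{N(t)}_{L^p(\R^2)}=O(t^{-(1-\f1p)})$, the same order as the linear leading term, with no decay to spare. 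The improved estimate~\eqref{lim-nabla-v} is what breaks this criticality: writing $\norm{u(s)\,\partial_iv(s)}_{L^m(\R^2)}\le\norm{u(s)}_{L^a(\R^2)}\norm{\nabla v(s)}_{L^b(\R^2)}$ with $\f1m=\f1a+\f1b$, and using~\eqref{reg_effetct1} for $u$ together with~\eqref{lim-nabla-v} (applied with $r=b$ and a suitable $q$) for $\nabla v$, gives $\norm{u(s)\,\partial_iv(s)}_{L^m(\R^2)}\le C\,s^{-(1-\f1a)-(1-\f1q)}$, hence
\[
\norm{N(t)}_{L^p(\R^2)}\le C\int_0^t(t-s)^{-\f12-(\f1m-\f1p)}\,s^{-(1-\f1a)-(1-\f1q)}\,ds\le C\,t^{-(1-\f1p)-\gamma}\,,\qquad \gamma=\f12+\f1b-\f1q\,,
\]
so that $t^{1-\f1p}\norm{N(t)}_{L^p(\R^2)}\le C\,t^{-\gamma}\to0$ as soon as $\gamma>0$.

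The remaining work, and the main obstacle, is the feasibility of this exponent bookkeeping: one must choose $a,b,q$ with $b\ge2$ so that simultaneously $m\ge1$ (i.e. $\f1a+\f1b\le1$), the convolution maps $L^m\to L^p$ with an integrable singularity at $s=t$ (i.e. $\f1m-\f1p<\f12$), the product is integrable at $s=0$ (i.e. $\f1a+\f1q>1$), and the gain $\gamma=\f12+\f1b-\f1q$ is strictly positive; a short check shows such a choice exists for every $p\in[1,\infty)$. Finally, the endpoint $p=\infty$ in~\eqref{lim-uMG} I would obtain, as in the proof of Proposition~\ref{prop:longtime-alpha=0}, by Gagliardo--Nirenberg interpolation between the already established $L^q$ convergence and the uniform gradient bounds for $u$ and $MG$ furnished by Proposition~\ref{pr:moreestimates}. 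Throughout, everything hinges on $\alpha>0$ supplying, via~\eqref{lim-nabla-v}, strictly more decay of $\nabla v$ than the scaling-invariant rate, and on checking that this surplus survives the H\"older and Beta-function estimates to produce a genuine $o(t^{-(1-\f1p)})$ rather than merely $O(t^{-(1-\f1p)})$.
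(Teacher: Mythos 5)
Your proposal follows essentially the same route as the paper: both establish the improved decay \eqref{lim-nabla-v} first, directly from \eqref{eq:Duhamel_v}, by exploiting the factor $e^{-(\alpha/\e)(t-s)}$ and splitting the Duhamel integral at $s=t/2$ (exponentially small piece on $(0,t/2)$, a convergent $\int_0^\infty e^{-\alpha\sigma/\e}\sigma^{-\f12-\f1q+\f1r}\,d\sigma$ on $(t/2,t)$ under exactly your condition $q>\f{2r}{r+2}$), and both then feed this surplus into \eqref{eq:Duhamel_u} after peeling off the linear term $G(t)*u_0-MG(t)$. The only real divergence is in the nonlinear integral $N(t)$: you run a single H\"older/Young/Beta estimate with exponents $(a,b,m,q)$ adapted to $p$, whereas the paper splits again at $s=t/2$ and uses two different H\"older pairings (kernel in $L^p$ and $u\,\partial_iv\in L^1$ on $(0,t/2)$; kernel in $L^{r'}$ and $u\in L^p$ on $(t/2,t)$). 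Your bookkeeping does close for every finite $p$ — with $b=2$, $\f1p-\f12\le\f1a<\f1p$ and any $q\in(1,\f a{a-1})$ all of your constraints hold and $\gamma=1-\f1q>0$ — but it necessarily breaks at $p=\infty$: there the requirements $\f1a+\f1b<\f12$, $\f1a+\f1q>1$ and $q>\f{2b}{b+2}$ force simultaneously $\f1q>\f12+\f1b$ and $\f1q<\f12+\f1b$, a contradiction for every $b\ge2$. The paper's second splitting is precisely what rescues that endpoint, since on $(0,t/2)$ the non-integrable kernel singularity $(t-s)^{-3/2}$ is evaluated away from $s=t$. Your Gagliardo--Nirenberg fallback for $p=\infty$ would work, but note that it invokes the gradient bound \eqref{eq:grad u} of Proposition \ref{pr:moreestimates}, which is \emph{not} among the hypotheses of this theorem (only \eqref{reg_effetct1}--\eqref{reg_effetct2} are assumed here, in contrast with Theorem \ref{th:longtime-alpha=0}); to stay within the stated assumptions you should treat $p=\infty$ by the $t/2$-splitting instead. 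Everything else, including the classical fact $t^{1-\f1p}\|G(t)*u_0-MG(t)\|_{L^p(\R^2)}\to0$, matches the paper's argument.
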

\begin{proof}
Let $p\in [1,\infty]$. From \eqref{eq:Duhamel_u}, we have
\beq
\|u(t)-MG(t)\|_{L^p(\R^2)}\le \|G(t)*u_0-MG(t)\|_{L^p(\R^2)}+\sum_{i}\int_0^t \|\partial_{i} G(t-s)*(u(s)\partial_{i} v(s))\|_{L^p(\R^2)}\,ds\,.
\label{ineq-uM0t}
\eeq
Since $\lim_{t\to0}t^{1-\f1p}\|G(t)*u_0-MG(t)\|_{L^p(\R^2)}=0$, we need to estimate only the second term on the right hand side of \eqref{ineq-uM0t} to obtain \eqref{lim-uMG}. To begin with, we shall derive the improved decay rate \eqref{lim-nabla-v} of $\nabla v(t)$. 

Let $r\in [2,\infty]$ and $q>2$ such that $\f1q-\f1r<\f12$. From \eqref{eq:Duhamel_v} and \eqref{reg_effetct1}, there exist $C_1=C_1(r,q)>0$ and $C_2=C_2(\e,r,q,\|u_0\|_{L^1(\R^2)})>0$ such that  for $t>0$ it holds 
\beq
\begin{split}
\|\nabla v(t)\|_{L^r(\R^2)}&\le C_1\,e^{-\f{\alpha}{\e}t}\,t^{-(\f12-\f1r)}\|\nabla v_0\|_{L^2(\R^2)}+C_2\int_0^t\f{e^{-\frac{\alpha}{\e}(t-s)}}
{(t-s)^{\f1q-\f1r+\f12}}\,\f1{s^{1-\f1q}}\,ds\,.
\end{split}
\label{est:grad v-alpha}
\eeq
We have on the one hand 
\[
\int_0^{\f t2}\f{e^{-\frac{\alpha}{Öe}(t-s)}}{(t-s)^{\f1q-\f1r+\f12}}\,\f1{s^{1-\f1q}}\,ds\le\f{e^{-\frac{\alpha}{\e}\f t2}}{(t/2)^{\f1q-\f1r+\f12}}\,\int_0^{\f t2}\f1{s^{1-\f1q}}\,ds= C(r,q)\,e^{-\frac{\alpha}{\e}\frac{t}{2}}\,t^{-(\f 12-\f 1r)}
\]
and on the other hand 
\[
\int_{\f t2}^t\f{e^{-\frac{\alpha}{\e}(t-s)}}{(t-s)^{\f1q-\f1r+\f12}}\,\f1{s^{1-\f1q}}\,ds
\le (t/2)^{-(1-\f 1q)}\int_0^\infty\f{e^{-\f\alpha\e\tau}}{\tau^{\f1q-\f1r+\f12}}\,d\tau
=C(r,q,\e,\alpha)\,t^{-(1-\f 1q)}\,.
\]
Adding the above two estimates and plugging the resulting one into \eqref{est:grad v-alpha}, we obtain for $t>0$
\[
t^{\f12-\f 1{r}}\norm{\nabla v(t)}_{L^r(\R^2)}\le C(r,q,\|\nabla v_0\|_{L^2(\R^2)})\,e^{-\frac{\alpha}{\e}\frac{t}{2}}+
C(r,q,\e,\alpha)\,t^{-(\f 1r-\f 1q+\f 12)}\,,
\]
implying the existence of $t_0=t_0(r,q,\e,\alpha)$ such that  \eqref{lim-nabla-v} is satisfied for $t\ge t_0$. Since $t^{\f12-\f 1{r}}\norm{\nabla v(t)}_{L^r(\R^2)}$ is bounded for all $t>0$, \eqref{lim-nabla-v} is also satisfied for $t\in(0,t_0)$ with an appropriate constant $C=C(r,q,\e,\alpha)$.

Next, using \eqref{reg_effetct1} and \eqref{lim-nabla-v} with $\f{2r}{r+2}<q<r$, we have 
\beq
\begin{split}
\int_0^{\f t2} \|\partial_{i} G(t-s)*(u(s)\partial_{i} v(s))\|_{L^p(\R^2)}\,ds
&\le \int_0^{\f t2} \|\partial_{i} G(t-s)\|_{L^p(\R^2)}\|u(s)\|_{L^{r'}(\R^2)}\|\nabla v(s)\|_{L^r(\R^2)}\,ds \\
&\le C\f1{(t/2)^{1-\f 1p+\f 12}}\int_0^{\f t2}\f1{s^{\f1r}}\f1{s^{1-\f 1q}}\,ds=C\,t^{-(1-\f1p)-(\f12+\f1r-\f1q)}\,.
\end{split}
\label{ineq-uM0t2}
\eeq
On the other hand, using \eqref{reg_effetct1} and \eqref{lim-nabla-v} with $r>2$
\beq
\begin{split}
\int_{\f t2}^t \|\partial_{i} G(t-s)*(u(s)\partial_{i} v(s))\|_{L^p(\R^2)}\,ds&\le 
\int_{\f t2}^t\|\partial_{i} G(t-s)\|_{L^{r'}(\R^2)}\|u(s)\|_{L^{p}(\R^2)}\|\nabla v(s)\|_{L^r(\R^2)}\,ds\\
&\le C\int_{\f t2}^t\f1{(t-s)^{\f1r+\f12}}\f1{s^{1-\f1p}}\f1{s^{1-\f1q}}\,ds=C\,t^{-(1-\f1p)-(\f12+\f1r-\f1q)}\,.
\end{split}
\label{ineq-uMt2t}
\eeq
Adding \eqref{ineq-uM0t2} and \eqref{ineq-uMt2t} and plugging the resulting estimate into \eqref{ineq-uM0t} we get \eqref{lim-uMG}.
\end{proof}
\begin{remark}
It is worth noticing that the constant $C$ in \eqref{lim-nabla-v} goes to $+\infty$ as $q\to\infty$. Moreover, the asymptotic behavior \eqref{lim-uMG} can be improved if the initial datum $u_0$ is smoother. For instance, if $u_0\in L^1(\R^2,1+|x|)$, it is easy to show from the previous arguments that for all 
$p\in[1,\infty]$ and $\delta\in(0,\f12)$, it holds
\[
\lim_{t\to\infty}t^{(1-\f 1p)+\delta}\|u(t)-MG(t)\|_{L^p(\R^2)}=0\,,
\]
(see also \cite{EZ91a,N06}).
For $\alpha=\e=1$, \eqref{lim-uMG} is established with $p=\infty$ in \cite{N01} assuming 
$(u_0,v_0)\in L^1\cap L^\infty$ nonnegative and $M<4\pi$. 
Notice that in our result, the positivity of the initial data is not required neither is the smallness of $M$.

\end{remark}
\section{Appendix}
\label{sec:appendix}
\begin{proof}[Proof of \eqref{ineq:Se}]
We shall assume first that $f\in L^2(K _\e)$. Then, $v(s):=S _\e(s)f\in H^2(K _\e)$ for $s>0$, and by \eqref{eq:moment dans L2} we deduce that $\nabla\cdot(K_\e\nabla v)\,v\,K_{\tau-\e}\in L^1(\R^2)$ since 
\[
\nabla\cdot(K_\e\nabla v)\,v\,K_{\tau-\e}=\e(\f\xi2\cdot\nabla v\,K _{ \tau /2})\,v\,K _{ \tau /2}+(\Delta vK _{ \tau/2 })\,vK _{ \tau/2 }\,.
\]
Let now $\varphi _n$ be a sequence of regular compactly supported functions converging to one as $n\to \infty$. Multiplying the equation for $v$, i.e. $\e\,v_s+L_\e v=0$, by $v\,K _\tau\,\varphi _n$, integrating over $\R^2$ and using the integration by parts, we get (since $\varphi _n$ is compactly supported)
\[
\begin{split}
\f\e2\f d{ds}\|v(s)\sqrt{\varphi _n}\|^2_{L^2(K_\tau)}&=\int_{\R^2}\nabla\cdot(K_\e\nabla v)v\,K_{\tau-\e}\,\varphi _n \\
&=-\int_{\R^2}|\nabla v|^2K_\tau\, \varphi _n-(1-\f\e\tau)\int_{\R^2}\nabla\left(\f{v^2}2\right)\cdot\nabla K_\tau\, \varphi _n
 -\int_{\R^2}v\,\nabla v\cdot\nabla  \varphi _n\,K_{\tau}\\
&=-\int_{\R^2}|\nabla v|^2K_\tau\, \varphi _n+\f12(1-\f\e\tau)\int_{\R^2}v^2\,\Delta K_\tau \varphi _n
+\f12(1-\f\e\tau)\int  _{ \R^2 }v^2 \nabla K _{ \tau  }\cdot\nabla \varphi _n\\
&\qquad -\int_{\R^2}v\,\nabla v\cdot\nabla\varphi _n\,K_{\tau} \\
&=-\int_{\R^2}|\nabla v|^2K_\tau\, \varphi _n+\f\tau8(\tau-\e)\int_{\R^2}v^2|\xi|^2K_\tau\,\varphi _n+\f12(\tau-\e)\int_{\R^2}v^2K_\tau\, \varphi _n\\
&\qquad +\f12(1-\f\e\tau)\int  _{ \R^2 }v^2\, \nabla K _{ \tau  }\cdot\nabla \varphi _n -\int_{\R^2}v\,\nabla v\cdot\nabla\varphi _n\,K_{\tau} \, .
\end{split}
\]
Since $v(s)\in H^2(K _{ \e })$ and $\tau \le \varepsilon$  we have that $v,|\nabla v|\in L^2(K _{ \tau  })$ and by \eqref{eq:moment dans L2}, $v|\xi|\in L^2(K _{ \tau  })$. Then, taking the limit as $n\to \infty$, and using the  Lebesgue's dominated convergence Theorem we deduce
\[
\begin{split}
\f\e2\f d{ds}\|v(s)\|^2_{L^2(K_\tau)}&=-\int_{\R^2}|\nabla v|^2K_\tau+\f\tau8(\tau-\e)\int_{\R^2}v^2|\xi|^2K_\tau(\xi)+\f12(\tau-\e)\int_{\R^2}v^2K_\tau(\xi)\,,
\end{split}
\]
i.e. $\|v(s)\|^2_{L^2(K_\tau)}$ is exponentially decreasing and
\beq
\int_0^s\|\nabla v(\sigma)\|^2_{L^2(K_\tau)}d\sigma\le\f\e2\|f\|^2_{L^2(K_\tau)}\,,\qquad s>0\,.
\label{eq:est_Ktau_0.Bis}
\eeq

Next, multiplying the equation for $v$ by $-\nabla\cdot( \varphi _n K_\tau\nabla v)$, we get
\beq
\begin{split}
\f\e2\f d{ds}\|\sqrt{\varphi _n}\,\nabla v(s)\|^2_{L^2(K_\tau)}&=-\int_{\R^2}\left[K _{ -\e }\nabla \cdot (K _{ \e }\nabla v)\right][\nabla\cdot(\varphi _n K_\tau\nabla v)]\\
&=-\int_{\R^2}[\Delta v+\frac {\e} {2}\xi\cdot \nabla v][K _{ \tau }\varphi _n\Delta v+\frac {\tau } {2}\xi \cdot \nabla v K _{ \tau  }\varphi _n+\nabla v\cdot \nabla \varphi _n K _{ \tau  }]\\
&=-\int_{\R^2}|\Delta v|^2K_\tau \varphi _n-\f{\e\tau}4\int_{\R^2}(\nabla v\cdot\xi)^2K_\tau \varphi _n
-\f12(\e+\tau)\int_{\R^2}\Delta v(\xi\cdot\nabla v)K_\tau \varphi _n\\
&\qquad-\int_{\R^2}[\Delta v+\frac {\e} {2}\xi\cdot \nabla v][\nabla v\cdot \nabla \varphi _n K _{ \tau  }]\,.
\label{eq:est_Ktau_1.Bis}
\end{split}
\eeq
We have also
\beq
\begin{split}
\int_{\R^2}\Delta v(\xi\cdot\nabla v)K_\tau \varphi _n&=-\int_{\R^2}|\nabla v|^2K_\tau \varphi _n-\f12\int_{\R^2}\xi\cdot\nabla(|\nabla v|^2)K_\tau \varphi _n-\f\tau2\int_{\R^2}(\xi\cdot\nabla v)^2K_\tau \varphi _n-\\
&\qquad - \int_{\R^2}(\xi\cdot\nabla v)\nabla v\cdot \nabla\varphi _n\,K_\tau\\
&=-\f12\int_{\R^2}\nabla\cdot(\xi|\nabla v|^2)K_\tau \varphi _n-\f\tau2\int_{\R^2}(\xi\cdot\nabla v)^2K_\tau \varphi _n
- \int_{\R^2}(\xi\cdot\nabla v)\nabla v\cdot \nabla\varphi _n\,K_\tau\\
&=\f\tau4\int_{\R^2}|\xi|^2|\nabla v|^2K_\tau \varphi _n+\f12\int_{\R^2}|\nabla v|^2(\xi\cdot\nabla \varphi _n)\,K_\tau
-\f\tau2\int_{\R^2}(\xi\cdot\nabla v)^2K_\tau \varphi _n-\\
&\qquad- \int_{\R^2}(\xi\cdot\nabla v)\nabla v\cdot \nabla\varphi _n\,K_\tau\,.
\end{split}
\label{eq:est_Ktau_2.Bis}
\eeq
Therefore, plugging (\ref{eq:est_Ktau_2.Bis}) into (\ref{eq:est_Ktau_1.Bis}),
\[
\begin{split}
\f\e2\f d{ds}\|\sqrt{\varphi _n}\,\nabla v(s)\|^2_{L^2(K_\tau)}&=-\int_{\R^2}|\Delta v|^2K_\tau \varphi _n+\f{\tau^2}4\int_{\R^2}(\nabla v\cdot\xi)^2K_\tau \varphi _n
-\f\tau 8 (\e+\tau)\int_{\R^2}|\xi|^2|\nabla v|^2K_\tau \varphi _n\\
&\quad-\f12(\e+\tau )\left(\f12\int_{\R^2}|\nabla v|^2(\xi\cdot \nabla \varphi _n)K_\tau  - \int_{\R^2}(\xi\cdot\nabla v)\nabla v\cdot \nabla\varphi _n\,K_\tau\right)\\
&\quad-\int_{\R^2}[\Delta v+\frac {\e} {2}\xi\cdot \nabla v][\nabla v\cdot \nabla \varphi _n K _{ \tau  }]\,.
\end{split}
\]
Again, by the Lebesgue's convergence theorem, we deduce
\[
\f\e2\f d{ds}\|\nabla v(s)\|^2_{L^2(K_\tau)}=-\int_{\R^2}|\Delta v|^2K_\tau +\f{\tau^2}4\int_{\R^2}(\nabla v\cdot\xi)^2K_\tau 
-\f\tau 8 (\e+\tau)\int_{\R^2}|\xi|^2|\nabla v|^2K_\tau\le0
\label{eq:est_Ktau_1.BisBis}
\]
i.e. $\|\nabla v(s)\|^2_{L^2(K_\tau)}$ is decreasing, so that \eqref{eq:est_Ktau_0.Bis} implies
\[
s\|\nabla v(s)\|^2_{L^2(K_\tau)}\le\f\e2\|f\|^2_{L^2(K_\tau)}\,,\qquad s>0\,.
\label{eq:est_Ktau_1.BisTer}
\]

Consider now $f\in L^2(K _\tau)$ and a sequence $f_n\in H^1(K _{ \e })$ such that $f_n\to f$ in $L^2(K _{ \tau  })$ as $n\to \infty$, and the corresponding sequence of functions $v_n(s)=S _{ \varepsilon  }(s)f _{ n }$. Since the previous argument may be applied to $(v_n-v_m)$, we have that $\|v_n(s)-v_m(s)\|^2_{L^2(K_\tau)}$ is exponentially decreasing with
\beqan
\|v_n(s)-v_m(s)\|^2_{L^2(K_\tau)}\le \|v_n(0)-v_m(0)\|^2_{L^2(K_\tau)}=\|f_n-f_m\|^2_{L^2(K_\tau)}\,,\quad s>0\,,
\eeqan
and
\[
s\|\nabla (v_n(s)-v_m(s))\|^2_{L^2(K_\tau)}\le \f\e2\| f_n-f_m\|^2_{L^2(K_\tau)}\,,\qquad s>0\,.
\]
Hence, $v_n(s)$ is a Cauchy sequence in $H^1(K _{ \tau  })$, for all $s>0$, $v_n(s)\to v(s)$ in $H^1(K _{ \tau  })$ as $n\to\infty$, $v(s)=S _\e(s)f$ and \eqref{ineq:Se} holds true for $v$.  
%
%
\end{proof}
\begin{acknowledgment}
The first author acknowledges the support of the french ``ANR blanche'' project Kibord : ANR-13-BS01-0004.
The second author is supported by DGES Grant 2011-29306-C02-00 and Basque Government Grant IT641-13.
\end{acknowledgment}
%
%

\noindent ${^a}$ Laboratoire d'Analyse et Probabilit\'e \\
Universit\'e d'Evry Val d'Essonne, \\
23 Bd. de France, F--91037 Evry Cedex \\
emails: lucilla.corrias@univ-evry.fr, julia.matos@univ-evry.fr
\bigskip

\noindent ${^b}$ Departamento de Matem\'aticas, Facultad de Ciencia y Tecnologia, \\
Universidad del Pais Vasco, E-48080 Bilbao, Spain \\
\textit{\&}  Basque Center of Applied Mathematics, (BCAM)\\
Alameda de Mazarredo 14, E--48009 Bilbao, Spain.\\
email: mtpesmam@ehu.es\\

\end{document}